\numberwithin{equation}{section}
\newtheorem{theorem}{Theorem}[section]
\newtheorem{corollary}[theorem]{Corollary}
\newtheorem{lemma}[theorem]{Lemma}
\newtheorem{proposition}[theorem]{Proposition}
\newtheorem{remark}[theorem]{Remark}
\newtheorem{definition}[theorem]{Definition}
\theoremstyle{remark}
\newcommand{\wt}{\widetilde}
\newcommand{\wh}{\widehat}
\def \eps {\varepsilon}
\newcommand{\R}{\mathbb{R}}
\newcommand{\A}{\mathbb{A}}
\newcommand{\D}{\mathbb{D}}
\newcommand{\C}{\mathbb{C}}
\newcommand{\cC}{\mathcal{C}}
\newcommand{\cD}{\mathcal{D}}
\newcommand{\E}{\mathbb{E}}
\newcommand{\bE}{{\bf E}}
\renewcommand{\P}{\mathbb{P}}
\newcommand{\bP}{{\bf P}}
\renewcommand{\S}{\mathbb{S}}
\newcommand{\dist}{\mathrm{dist}}
\DeclareMathOperator{\SLE}{SLE}
\DeclareMathOperator{\CLE}{CLE}
\newcommand{\lp}{\mathrm{loop}}
\newcommand{\cW}{\mathcal{W}}
\newcommand{\cL}{\mathcal{L}}
\newcommand{\cQ}{\mathcal{Q}}
\newcommand{\cK}{\mathcal{K}}
\newcommand{\Fill}{\mathrm{Fill}}
\newcommand{\Cout}{\mathcal{C}^{\mathrm{out}}}
\begin{document}

\title{Uniqueness of generalized conformal restriction measures and Malliavin-Kontsevich-Suhov measures for $c \in (0,1]$}

\author{Gefei Cai\footnote{\url{caigefei1917@pku.edu.cn}; Beijing International Center for Mathematical Research, Peking University}\hspace{2cm}
Yifan Gao\footnote{\url{gaoyifan75@westlake.edu.cn}; Institute for Theoretical Sciences, Westlake University}}

\date{}

\maketitle

\begin{abstract}
In this paper, we present a unified approach to establish the uniqueness of generalized conformal restriction measures with central charge $c \in (0, 1]$ in both chordal and radial cases, 
by relating these measures to the Brownian loop soup.
Our method also applies to the uniqueness of the Malliavin-Kontsevich-Suhov loop measures for $c \in (0,1]$, which was recently obtained in [Baverez-Jego (2024)] for all $c \leq 1$ from a CFT framework of SLE loop measures. In contrast, though only valid for $c \in (0,1]$, our approach provides additional probabilistic insights, as it directly links natural quantities of MKS measures to loop-soup observables.
\bigskip

\textit{Key words and phrases: conformal restriction, Malliavin-Kontsevich-Suhov measure, Brownian loop soup, Schramm-Loewner evolution.}
\end{abstract}

\section{Introduction}
In the study of Brownian intersection exponents~\cite{MR1796962}, Lawler and Werner first realized that any conformally invariant process satisfying a certain restriction property would give rise to the same intersection exponents as the Brownian motion.
This idea, combined with Schramm's discovery of SLE process \cite{Sc2000}, finally led to the rigorous determination of the Brownian intersection exponents~\cite{LSW2001a,LSW2001b,LSW2002a,LSW2002b}. To further deepen this idea,
Lawler, Schramm and Werner \cite{lawler2003conformal} gave a complete understanding of the \emph{conformal restriction measures} in the chordal case (which corresponds to Definition~\ref{def:chordal} with $c=0$ below). 
In particular, they showed that the boundary of conformal restriction samples can be described by variants of $\SLE_{8/3}$.
Conformal restriction measures in other cases are also studied in~\cite{wu2015conformal, qian2018conformal}.

For general $\SLE_\kappa$ with $\kappa\in (0,4]$, there is a corresponding parameter $c\le 1$ called \emph{central charge} related to it via 
\begin{equation}\label{eq:ckappa}
	c(\kappa)=1-6\bigg(\frac{2}{\sqrt{\kappa}}-\frac{\sqrt{\kappa}}{2}\bigg)^2,
\end{equation}
such that $\SLE_\kappa$ and its variants
satisfy a \emph{generalized conformal restriction} property that involves the Brownian loop mass with the corresponding $c$; see e.g. \cite{lawler2003conformal,MR2118865,MR2518970,werner2013cle,qian2021generalized}. For $c\in (0,1]$, corresponding to $\kappa\in (\frac{8}{3},4]$, there is another simple way to obtain a general restriction sample by taking the filled union of a standard one with all loop-soup clusters it intersects from an independent Brownian loop soup of intensity $\frac{c}{2}$.
However, it remains open whether these generalized restriction measures are unique.

There also exists a loop version of restriction measure, known as the \emph{Malliavin-Kontsevich-Suhov (MKS) measure}~\cite{kontsevich2007malliavin}. It is an infinite measure defined on simple loops on \(\C\) with a parameter \( c \leq 1 \); see Definition~\ref{def:MKS} below. To construct an MKS measure, one uses the loop version of \(\SLE_\kappa\)~\cite{kemppainen2016nested,zhan2021sle} (see also~\cite{werner2008conformally,benoist2016sle} for the special cases \( c = 0, -2 \)). 
Recently, Baverez and Jego~\cite{baverez2024cft} established the framework on the conformal field theory (CFT) of SLE, and as an application, they obtained the uniqueness of MKS measures for all $c\le 1$. It would be interesting to explore whether their approach can be adapted to show the uniqueness of generalized restriction measures.

In this paper, we present a unified and direct approach to establish the uniqueness of all these restriction measures for \( c \in (0,1] \), based on a key observation that connects these measures to the \emph{Brownian loop soup} or the \emph{$\SLE_{8/3}$ loop soup}. In particular, we explicitly express various natural quantities associated with these measures in terms of loop-soup observables, which provides further probabilistic understanding of these measures.

In the following, we will state our results on generalized restriction measures and MKS measures in Sections~\ref{sec:intro-chordal} and~\ref{sec:intro-loop}, respectively, and then provide several applications in Section~\ref{sec:implication}.

\subsection{Generalized chordal restriction measures}\label{sec:intro-chordal}

For convenience, we focus on the unit disk \(\D \subset \C\), with two marked boundary points \(-1\) and \(1\). Let \(\S^1 := \partial\D\). 
Let \(\mathcal{K}\) (resp.\ \(\mathcal{K}_-\)) be the collection of simply connected closed sets \(K \subset \overline{\D}\) such that \(K \cap \S^1 = \{-1, 1\}\) (resp.\ \(K \cap \S^1 = \S^1 \cap \{z : \Im z \geq 0\}\)). 
Let \(\mathcal{Q}\) be the collection of compact sets \(A \subset \overline{\D}\) such that \(\D \setminus A\) is simply connected and \(-1, 1 \notin A\). For each \(A \in \mathcal{Q}\), we choose a point $w_0\in \S^1\setminus A$ (depending on $A$) other than $-1,1$, and
let \(f_{A,w_0}: \D \setminus A \to \D\) be the conformal map that preserves \(-1,1\) and $w_0$. We also denote the set of \(A \in \mathcal{Q}\) with \(A \cap \S^1 \subset \{z \in \C : \Im z < 0\}\) by \(\mathcal{Q}_-\). 

We equip \(\mathcal{K}\) (resp.\ \(\mathcal{K}_-\)) with the \(\sigma\)-algebra generated by the collection of events \(\{K \in \mathcal{K} : K \cap A = \emptyset\}\) (resp.\ \(\{K \in \mathcal{K}_- : K \cap A = \emptyset\}\)) for all \(A \in \mathcal{Q}\) (resp.\ \(A \in \mathcal{Q}_-\)).

\begin{definition}\label{def:chordal}
	Let $c,\alpha\in\R$.
	We say that a probability measure $\P$ on $\mathcal{K}$ (resp.\ $\mathcal{K}_-$) satisfies the two-sided (resp.\ one-sided) chordal $c$-restriction with exponent $\alpha$, if for all $A\in\mathcal{Q}$ (resp.\ $A\in\mathcal{Q}_-$), we have
	\begin{equation}\label{eq:chordal-res}
		\frac{d\P(K)}{d\P_A(K)}{\bf 1}_{K\cap A=\emptyset}={\bf 1}_{K\cap A=\emptyset}(f_{A,w_0}'(1)f_{A,w_0}'(-1))^\alpha\exp\left(-\frac{c}{2}\Lambda_{\D}(K,A)\right),
	\end{equation}
	where $\P_A:=\P\circ f_{A,w_0}$, and $\Lambda_\D(K,A)$ is the total mass of  loops on $\D$ intersecting both $K$ and $A$ under the Brownian loop measure $\mu^{\rm BL}$, defined in \cite{lawler2004brownian}. Note that $f_{A,w_0}'(1)f_{A,w_0}'(-1)$ does not depend on the choice of $w_0$.
\end{definition}

When \( c = 0 \), Definition~\ref{def:chordal} reduces to the \emph{standard} chordal restriction measures in~\cite{lawler2003conformal}. The uniqueness of such measures is straightforward, as~\eqref{eq:chordal-res} now gives \(\P[K \cap A = \emptyset] = (f_{A,w_0}'(1) f_{A,w_0}'(-1))^\alpha\).

Note that the two-sided case above has been studied in \cite{qian2021generalized}. 
In \cite[Proposition 6.2]{qian2021generalized}, the author constructed measures that satisfy the two-sided chordal $c$-restriction with exponent \(\alpha\) in the range \( c \leq 1 \) (i.e., \(\kappa \in (0,4]\)) and \(\alpha \geq \frac{6-\kappa}{2\kappa}\), using variants of \(\SLE_\kappa\). However, it remains open whether these measures are unique or whether this range of \(\alpha\) is maximal for which these measures exist.

The first main result of this paper is to provide positive answers to both questions for \( c \in (0,1] \). Similar result also holds for the one-sided chordal $c$-restriction measures.

\begin{theorem}\label{thm:chordal-unique}
	Let $c\in(0,1]$ and $\kappa\in(\frac{8}{3},4]$ be related via \eqref{eq:ckappa}. Then the two-sided (resp.\ one-sided) chordal $c$-restriction with exponent $\alpha$ exists if and only if $\alpha\ge\frac{6-\kappa}{2\kappa}$ (resp.\ $\alpha>0$). Furthermore, when $\alpha\ge\frac{6-\kappa}{2\kappa}$ (resp.\ $\alpha>0$), there exists a unique probability measure on $\mathcal{K}$ (resp.\ $\mathcal{K}_-$) satisfying the two-sided (resp.\ one-sided) chordal $c$-restriction with exponent $\alpha$ in Definition~\ref{def:chordal}.
\end{theorem}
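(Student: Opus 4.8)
The plan is to reduce the uniqueness problem to the structure of the Brownian loop soup. The key observation — which I expect the paper to exploit — is that when $c \in (0,1]$, a sample from a generalized $c$-restriction measure can be produced from a standard ($c=0$) restriction sample $\wh K$ by taking the filled union of $\wh K$ with all clusters of an independent Brownian loop soup of intensity $c/2$ that $\wh K$ touches. So first I would set up this ``loop-soup decoration'' map carefully: given the loop soup $\mathcal L$ on $\D$ of intensity $c/2$ and an independent sample $\wh K$, define $K = \Fill(\wh K \cup \bigcup\{\text{clusters of }\mathcal L \text{ meeting } \wh K\})$, and verify that for any $A \in \cQ$ (resp.\ $\cQ_-$) the restriction formula \eqref{eq:chordal-res} holds, with the $(f_A'(1)f_A'(-1))^\alpha$ factor coming from the standard restriction property of $\wh K$ and the $\exp(-\frac c2 \Lambda_\D(K,A))$ factor coming from the loop-soup partition-function / restriction covariance of the Brownian loop measure (the loops counted by $\Lambda_\D(K,A)$ are exactly those whose presence is incompatible with the event $\{K \cap A = \emptyset\}$ after conditioning on $\wh K$). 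This shows existence for $\alpha \ge \frac{6-\kappa}{2\kappa}$ (two-sided) and $\alpha > 0$ (one-sided), recovering Qian's construction.

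For uniqueness, suppose $\P$ is any measure satisfying the $c$-restriction with exponent $\alpha$. The strategy is to ``invert'' the decoration: I would argue that under $\P$, the sample $K$ has a well-defined ``outer boundary'' structure, and that conditionally on the loop soup $\mathcal L$, the law of $K$ is forced. Concretely, I would run the following argument. Using the restriction formula \eqref{eq:chordal-res} for a rich enough family of sets $A$ — in particular $A$'s that are themselves filled unions of small loop-soup clusters — one shows that the law of $K$ under $\P$ is absolutely continuous with respect to (and in fact equal to) the law obtained by decorating \emph{some} measure on $\cK$ by the loop soup; and then that this underlying measure must satisfy the \emph{standard} ($c=0$) restriction property with the same exponent $\alpha$. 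Since standard chordal restriction measures are unique for each $\alpha$ (by \cite{lawler2003conformal}, the $c=0$ case recalled after Definition~\ref{def:chordal}, existing iff $\alpha \ge 5/8$ two-sided / $\alpha > 0$ one-sided), and since the map ``measure $\mapsto$ loop-soup decoration'' is injective, uniqueness of $\P$ follows. The translation between the $c=0$ exponent threshold $5/8 = \frac{6-8/3}{2\cdot 8/3}$ and the general threshold $\frac{6-\kappa}{2\kappa}$ will come out of matching the conformal covariance of $\SLE_{8/3}$-type boundaries with the loop-soup decoration; I would make the ``only if'' direction precise by noting that any $c$-restriction measure, restricted to its $\SLE_{8/3}$-like ``spine,'' yields a standard restriction measure, which cannot exist below $\alpha = 5/8$.

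The technical heart — and the step I expect to be the main obstacle — is proving that the decoration map is injective on measures, equivalently that one can \emph{reconstruct} the law of the undecorated set $\wh K$ from the law of the decorated set $K$ together with the (independent, explicit) law of the loop soup. This is delicate because the decoration is not invertible sample-by-sample: many $(\wh K, \mathcal L)$ pairs give the same $K$. The resolution should go through a disintegration/martingale argument: condition on $\mathcal L$, and use the restriction formula against test sets $A$ disjoint from all of $\mathcal L$'s clusters to pin down the conditional law of the ``part of $K$ not explained by $\mathcal L$,'' exploiting that on the event that $\wh K$ meets no macroscopic cluster the decoration does nothing. One then needs a limiting argument (sending the loop-soup cutoff to $0$, or exhausting $\cQ$ by a countable determining family) to upgrade this to full uniqueness. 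Care is needed with measurability of the cluster-structure of the loop soup and with the fact that $\mathcal L$ has infinitely many small loops, so the ``clusters touched by $\wh K$'' must be handled via the Freivalds–Werner-type percolation estimates for the loop soup at intensity $c/2 \le 1/2$. Once injectivity of the decoration is in hand, the remaining steps are soft, since they reduce everything to the already-known $c=0$ classification.
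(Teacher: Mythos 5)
There is a genuine gap, and it is structural: your entire plan routes both existence and uniqueness through the $c=0$ classification via the ``decoration'' map, but the two-sided threshold in the theorem is $\frac{6-\kappa}{2\kappa}$, which is strictly below the $c=0$ threshold $\frac{5}{8}$ for every $\kappa\in(\frac{8}{3},4]$ (e.g.\ it equals $\frac14$ at $\kappa=4$). Decorating a standard two-sided restriction sample with loop-soup clusters can only produce measures with $\alpha\ge\frac58$, so your existence argument misses the whole range $\frac{6-\kappa}{2\kappa}\le\alpha<\frac58$; it does not ``recover Qian's construction'', which uses variants of $\SLE_\kappa$ precisely to reach that range (and this is what the paper cites for existence). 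The same mismatch breaks your uniqueness and ``only if'' steps: you want to show that any $c$-restriction measure is the decoration of an underlying measure satisfying the \emph{standard} restriction with the same exponent $\alpha$, and your ``spine'' argument would force $\alpha\ge\frac58$ in the two-sided case; but for $\alpha\in[\frac{6-\kappa}{2\kappa},\frac58)$ no standard two-sided measure with exponent $\alpha$ exists while the $c$-restriction measure does, so the proposed reduction would ``prove'' non-existence exactly where the theorem asserts existence. In addition, the injectivity/reconstruction step that you yourself identify as the technical heart is left open: the disintegration sketch never explains how to recover the law of the undecorated set from the decorated one, so even in the regime $\alpha\ge\frac58$ the uniqueness argument is incomplete.

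The paper avoids inverting the decoration altogether. For uniqueness it proves Theorem~\ref{thm:chordal-intersect}: the factor $\exp(-\frac c2\Lambda_\D(K,A))$ in \eqref{eq:chordal-res} is read as the conditional probability that no loop of $\cL_\D$ joins $K$ and $A$; iterating the restriction identity along the exploration $A=A_0\subset A_1\subset\cdots$ of Definition~\ref{def:ep} gives the recursion of Proposition~\ref{prop:recursion}, and letting $A_n\to\wt A$ (Lemmas~\ref{lem:dH}--\ref{lem:boundary}) yields $\P[K\cap A=\emptyset]=\bE\big[{\bf 1}_E\,(f_{\wt A}'(1)f_{\wt A}'(-1))^\alpha\big]$. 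Since the events $\{K\cap A=\emptyset\}$ generate the $\sigma$-algebra, this formula, which involves only the loop soup, determines $\P$ for \emph{every} admissible $\alpha$, including $\alpha<\frac58$. The sharp two-sided threshold is then obtained not from the $c=0$ classification but by observing that the lower boundary $\gamma$ of a two-sided sample is a one-sided $c$-restriction sample, hence an $\SLE_\kappa(\rho)$; $\alpha<\frac{6-\kappa}{2\kappa}$ would force $\rho<0$ and $\P[0\text{ lies above }\gamma]<\frac12$, contradicting symmetry, while the one-sided ``only if'' ($\alpha>0$) follows because the right-hand side of \eqref{eq:general} is $\ge1$ when $\alpha\le0$. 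To salvage your approach you would have to replace the reduction to $c=0$ by a direct identity of this type rather than by inverting the decoration.
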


The main ingredient to show the uniqueness part of Theorem~\ref{thm:chordal-unique} is to express the non-intersection probability $\P[K\cap A=\emptyset]$ by some quantity only involving the Brownian loop soup. 
For this purpose, we let $\mathcal{L}_\D$ be the Brownian loop soup on $\D$ of intensity $\frac{c}{2}$, that is, a Poisson point process with intensity measure $\frac{c}{2}\mu^{\rm BL}$ restricted to loops fully contained in $\D$. 
For any bounded set $B\subset\C$, the \emph{filling} of $B$ is defined as the complement of unbounded connected component of $\C\setminus \overline{B}$ ($\overline{B}$ denotes the closure of $B$). Hence, the filling of any set is closed by our definition.
Let $\wt A$ be the filling of the union of $A$ and all the loop clusters in $\mathcal{L}_\D$ that intersect $A$. 
We need to consider the following event
\begin{equation}\label{eq:E}
E:=\{\text{$-1$ and $1$ are in the same connected component of $\overline{\D}\setminus \wt A$}\}.
\end{equation}
For $c\in(0,1]$, almost surely, no closure of loop cluster in $\mathcal{L}_\D$ intersects $\S^1$ by~\cite[Lemma~9.4]{sheffield2012conformal}, and the set of closures of loop clusters is locally finite by~\cite[Lemma~9.7]{sheffield2012conformal}. Hence, $\wt A\cap\S^1=A\cap\S^1$ for all $A\in\mathcal{Q}$, and $E$ happens a.s. when $A\in\mathcal{Q}_-$. Moreover,
on the event $E$, there is a non-empty open connected component of $\D\setminus \wt A$ that has $\pm1,w_0$ on its boundary (recall that $w_0$ is some fixed point on $\S^1\setminus A$), and we let $f_{\wt A,w_0}$ be the conformal map from this component to $\D$ fixing $\pm1$ and $w_0$. Then we have
\begin{theorem}\label{thm:chordal-intersect}
	Let $c\in(0,1]$ and $\alpha\in \R$. 
	Suppose $\P$ satisfies the two-sided (resp.\ one-sided) chordal $c$-restriction with exponent $\alpha$. Then for any $A\in\mathcal{Q}$ (resp.\ $A\in\mathcal{Q}_-$),
	\begin{equation}\label{eq:general}
		\P[K\cap A=\emptyset]=\bE\left[{\bf 1}_E\left(f_{\wt A,w_0}'(1)f_{\wt A,w_0}'(-1)\right)^\alpha\right]
	\end{equation}
	where $\bE$ denotes the expectation with respect to the Brownian loop soup $\mathcal{L}_\D$.
\end{theorem}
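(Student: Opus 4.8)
The plan is to express $\P[K\cap A=\emptyset]$ as a loop-soup observable by iterating the restriction relation \eqref{eq:chordal-res}. Starting from \eqref{eq:chordal-res} with the test function $\equiv 1$ and noting that $\P_A$ is (a.s.) carried by sets avoiding $A$, one obtains $\P[K\cap A=\emptyset]=(f_A'(1)f_A'(-1))^\alpha\,\E_{\wt K\sim\P}\big[\exp(-\tfrac c2\Lambda_\D(f_A^{-1}(\wt K),A))\big]$. Since $\exp(-\tfrac c2\Lambda_\D(\cdot,A))$ is the probability that an independent Brownian loop soup $\mathcal L_\D$ of intensity $c/2$ has no loop meeting both $A$ and the given set, this reads: $\P[K\cap A=\emptyset]$ equals $(f_A'(1)f_A'(-1))^\alpha\,\bP[f_A^{-1}(\wt K)\text{ meets no loop of }\mathcal L_\D\text{ that meets }A]$, with $\wt K\sim\P$ independent of $\mathcal L_\D$.

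First I would reveal the loop-soup cluster of $A$ one generation at a time. With $\mathcal L_\D^{[1]}$ the loops meeting $A$, $\Xi_1:=A\cup\bigcup\mathcal L_\D^{[1]}$, and $E_1$ the event that $\pm1$ lie in one component $D_{\Xi_1}$ of $\overline\D\setminus\Xi_1$, conditioning on $\mathcal L_\D^{[1]}$ and mapping by $f_A$ turns the probability above into $\bE[\mathbf 1_{E_1}\,\P[\wt K\cap B_1=\emptyset]]$, where $B_1:=f_A(\bigcup\mathcal L_\D^{[1]}\setminus A)$ (after the routine reduction of replacing the transported sets by suitable elements of $\mathcal Q$). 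As $\P[\wt K\cap B_1=\emptyset]$ is again of this type, I would repeat the manipulation; the point is that, by the Poissonian restriction property of the Brownian loop soup and its conformal invariance, the fresh soup produced at the second step can be coupled to $f_A(\mathcal L_\D\setminus\mathcal L_\D^{[1]})$, so the loops it contributes are precisely the transported second generation of the cluster of $A$. Telescoping the derivative factors by the chain rule — equivalently, via the cocycle identity $\Lambda_\D(K,A\cup A')=\Lambda_\D(K,A)+\Lambda_\D(f_AK,f_AA')$ for the Brownian loop measure — this should give, for every $n$,
\[\P[K\cap A=\emptyset]=\bE\big[\mathbf 1_{E_n}\,(f_{\Xi_n}'(1)f_{\Xi_n}'(-1))^\alpha\,R_n\big],\]
where $\Xi_n$ is $A$ together with the first $n$ generations of its cluster, $E_n$ the event that $\pm1$ lie in one component $D_{\Xi_n}$ of $\overline\D\setminus\Xi_n$ with conformal map $f_{\Xi_n}\colon D_{\Xi_n}\to\D$ fixing $\pm1$, and $R_n\in[0,1]$ the conditional probability, given the first $n$ generations, that a further independent intensity-$c/2$ soup has no loop meeting both the transported $n$-th generation and a fresh $\P$-sample.

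Then I would pass to the limit $n\to\infty$. Here $\Xi_n\uparrow\Xi_\infty:=A\cup(\text{cluster of }A)$, whose filling is $\wt A$, and since the filling only removes bounded complementary components one has $D_{\Xi_n}\downarrow D_{\Xi_\infty}=D_{\wt A}$ and $E_n\downarrow E$; the Carath\'eodory kernel theorem, together with the smoothness of $\partial D_{\wt A}$ near $\pm1$ (on $E$, $\wt A$ avoids a neighbourhood of $\pm1$), gives $\mathbf 1_{E_n}(f_{\Xi_n}'(1)f_{\Xi_n}'(-1))^\alpha\to\mathbf 1_E(f_{\wt A}'(1)f_{\wt A}'(-1))^\alpha$, dominated by the constant $(f_A'(1)f_A'(-1))^\alpha$. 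The heart of the matter is to show $R_n\to1$ almost surely: for $c\le1$ the cluster of $A$ is a.s.\ bounded and contains only finitely many loops of diameter above any fixed level, so beyond some a.s.\ finite generation all later generations consist of arbitrarily small loops; propagating the bounded conformal distortion through the composition, their transported images shrink to a subset of $\S^1$, and since any loop contributing to the residual mass must join this shrinking set to a disjoint transported $\P$-sample it has diameter bounded below, whence the relevant Brownian-loop mass tends to $0$. Dominated convergence then yields \eqref{eq:general}. The one-sided case $A\in\mathcal Q_-$ is identical, with $E$ of full probability throughout.

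The step I expect to be the main obstacle is this last one — making rigorous that the deeply-explored generations, once transported, shrink to $\S^1$ even near $\pm1$ (where a typical $\P$-sample itself approaches the boundary), which requires controlling the distortion of an infinite composition of conformal maps, and, at the critical value $c=1$, handling the largest loop-soup clusters. The loop-soup inputs used above — a.s.\ boundedness of clusters for $c\le1$, local finiteness of macroscopic loops, and Poissonian restriction under conformal maps — are classical.
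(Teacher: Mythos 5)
Your skeleton coincides with the paper's: iterate \eqref{eq:chordal-res} along a generation-by-generation exploration of the loop-soup cluster of $A$ (this is exactly Definition~\ref{def:ep} and Proposition~\ref{prop:recursion}, with your $R_n$ being the expectation of $\exp(-\tfrac c2\Lambda_\D((f_{A_{n-1}}\circ f_{A_n}^{-1})(K),f_{A_{n-1}}(A_n\setminus A_{n-1})))$), then pass to the limit using $E_n\downarrow E$ and convergence of the boundary derivatives at $\pm1$. However, the step you yourself flag as the ``main obstacle'' is precisely the mathematical heart of the proof, and the route you sketch for it does not work. You argue that beyond an a.s.\ finite generation all newly added loops are small, and that ``propagating the bounded conformal distortion through the composition'' their images under $f_{A_{n-1}}$ shrink to $\S^1$. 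But the uniformizing maps $f_{A_{n-1}}$ have unbounded distortion near $\partial A_{n-1}$, and Euclidean smallness of a loop attached to the cluster gives no control on the size or position of its image; moreover ``diameter bounded below'' for connecting loops requires, in addition, that the transported new generation stays away from the places where the (independent) sample $K$ clings to $\S^1$ — away from $\pm1$ in the two-sided case — which you also leave unresolved. The paper proves both points by different, soft means: Lemma~\ref{lem:distance} shows $d_n=\sup\{\dist(z,\S^1):z\in f_{A_{n-1}}(A_n\setminus A_{n-1})\}\to0$ in probability \emph{without any distortion estimate}, by contradiction: if the transported new generation reached macroscopically into $\D$ with non-vanishing probability, then by the uniform loop-soup estimate of Lemma~\ref{lem:reaching} a fresh loop would connect it to $\tfrac12\D$ with uniformly positive conditional probability, and pulling back via the locally uniform convergence $f_{A_n}^{-1}\to f_{\wt A}^{-1}$ (Lemma~\ref{lem:dH}) this would force the cluster to enter $f_{\wt A}^{-1}(\tfrac{1+\sigma}{2}\D)$, contradicting $A_{n+1}\subset\wt A$. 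The separation from $\pm1$ and the convergence $f_{A_n}'(\pm1)\downarrow f_{\wt A}'(\pm1)$ are then obtained in Lemma~\ref{lem:boundary} via Schwarz reflection across the boundary arcs near $\pm1$ together with Koebe's $1/4$ theorem — not merely Carath\'eodory kernel convergence, which by itself only gives interior convergence. Without substitutes for these two lemmas your argument that $R_n\to1$ is incomplete, and I do not see how to repair it along the ``small loops have small images'' line.

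A smaller but real issue: your dominated-convergence step uses the constant bound $(f_A'(1)f_A'(-1))^\alpha$, which is only a valid dominating function when $\alpha\ge0$; the theorem is stated for all $\alpha\in\R$ (and the case $\alpha<0$, resp.\ $\alpha<\tfrac{6-\kappa}{2\kappa}$, is exactly what is needed later to rule out existence). The paper instead exploits the monotonicity $f_{A_n}'(\pm1)\downarrow f_{\wt A}'(\pm1)$ together with $0\le\exp(-\tfrac c2\Lambda_n)\le1$, and combines the monotone convergence theorem (for the upper bound) with Fatou's lemma along an a.s.\ convergent subsequence (for the lower bound), as in \eqref{eq:convg}; you would need this or an equivalent device to cover negative exponents.
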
 
The proof of \eqref{eq:general} is based on a simple observation that $\exp\left(-\frac{c}{2}\Lambda_{\D}(K,A)\right)$ in \eqref{eq:chordal-res} is in fact the probability that there is no loop in $\mathcal{L}_\D$ that intersects both $K$ and $A$ given $K$. Hence, by interpolating extra randomness from the Brownian loop soup, we obtain the first equation that $\P[K\cap A=\emptyset]=\bE\otimes\E_{A}\left[(f_{A,w_0}'(1)f_{A,w_0}'(-1))^\alpha {\bf 1}_{K\cap A_1=\emptyset}\right]$, where $\E_{A}$ represents the expectation for a restriction sample in $\D\setminus A$, and $A_1$ is the filling of the union of $A$ and all loops in $\mathcal{L}_\D$ that intersect $A$. Then, one can iterate the first equation until no loops can be added anymore, i.e. arriving at $\wt A$, and the limiting equation is just \eqref{eq:general}; see Section~\ref{subsec:chordal} for details. 
In fact, one can verify \eqref{eq:general} straightforwardly if $K$ is constructed by the union of a standard restriction sample with exponent $\alpha$ and all the clusters in $\mathcal{L}_\D$ that intersect it, which satisfies \eqref{eq:chordal-res} directly.

Moreover, similar results also hold for the generalized radial and trichordal restriction measures; see Section~\ref{sec:radial} for detailed discussions.
Next, we turn to the loop case.

\subsection{The Malliavin-Kontsevich-Suhov measure}\label{sec:intro-loop}

Let \(\mu_\C\) be a Borel measure on the set of simple loops on \(\C\). We say that \(\mu_\C\) is \textit{non-trivial} if for any bounded domain \(D\) and \(\delta > 0\), the $\mu_\C$-mass of simple loops \(\ell\) with \(\ell \subset D\) and \({\rm diam}(\ell) > \delta\) is positive and finite. 

\begin{definition}[{\cite{kontsevich2007malliavin}}]\label{def:MKS}
	A non-trivial measure \(\mu_\C\) on the set of simple loops on $\C$ is called a Malliavin-Kontsevich-Suhov measure if the following holds.
	For any simply connected domain \(D \subset \C\), define \(\mu_D\) by 
	\begin{equation}\label{eq:mks}
		\frac{d\mu_D(\eta)}{d\mu_\C(\eta)} \mathbf{1}_{\eta \subset D}:= \mathbf{1}_{\eta \subset D} \exp\left(\frac{c}{2} \Lambda^*(\eta, \partial D)\right),
	\end{equation}
	where \(\Lambda^*(\eta, \partial D)\) is the total mass of loops on $\C$ intersecting both $\eta$ and $\partial D$ under the normalized Brownian loop measure defined in~\cite{field2013reversed}. Then for any two conformally equivalent domains \(D\) and \(D'\), the pushforward of \(\mu_D\) under any conformal map from \(D\) to \(D'\) equals \(\mu_{D'}\). 
\end{definition}

The study of loop versions of SLEs can be traced back to Werner~\cite{werner2003sles}. Following this, Kontsevich and Suhov wondered whether the restriction property of these loops could be alternatively expressed in the framework of CFT, which led to the formalism described in \cite{kontsevich2007malliavin}. They also conjectured the existence and uniqueness of the measure in Definition~\ref{def:MKS} for all $c\le 1$ \cite[Conjecture~1]{kontsevich2007malliavin}, and related it to the unitarizing measure considered by Airault and Malliavin~\cite{airault2001unitarizing}. Now this measure is often known as the Malliavin-Kontsevich-Suhov measure.

Such an MKS measure was first considered by Werner~\cite{werner2008conformally} for $c=0$, where it is proved that the measure $\mathcal{W}$ induced from $\mu^{\text{BL}}$ by taking outer boundaries of Brownian loops indeed characterizes the $c=0$ MKS measure.
We will call $\mathcal{W}$ Werner's $\SLE_{8/3}$ loop measure on $\C$.
A construction for the case $c=-2$ was given in~\cite{benoist2016sle}.
Later, Kemppainen and Werner~\cite{kemppainen2016nested} constructed MKS measures for \( c \in (0,1] \) by taking the counting measure on loops in the full-plane \(\CLE_\kappa\) configuration, denoted by \(\SLE_\kappa^\lp\) in the sequel. In particular, they studied the $\SLE_{8/3}$ loop soup on $\C$ and proved that the outer and inner boundaries of $\SLE_{8/3}$ loop soup clusters are both equal to $\SLE_\kappa^\lp$. The construction for all \( c \leq 1 \) was finally completed by Zhan~\cite{zhan2021sle}.
Recently, Baverez and Jego~\cite{baverez2024cft} rigorously derived the CFT of SLE loop measures, based on an in-depth study of its Virasoro algebra structures. As a consequence, they obtained the uniqueness of MKS measures for all \( c \leq 1 \). See also a contemporaneous work~\cite{gordina2025infinitesimal} on the Virasoro representation of SLE loops.

The second main result of this paper is a direct and more ``probabilistic'' proof in the regime \( c \in (0,1] \), following a similar approach used in the generalized chordal restriction measures.

\begin{theorem}\label{thm:loop-unique}
	Let $c\in(0,1]$. The MKS measure is unique up to a positive multiplicative constant.
\end{theorem}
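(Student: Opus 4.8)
\textbf{Proof proposal for Theorem~\ref{thm:loop-unique}.}

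The plan is to mimic, in the loop setting, the ``interpolate-then-iterate'' strategy that underlies Theorem~\ref{thm:chordal-intersect}, and to reduce the uniqueness of a general MKS measure with central charge $c\in(0,1]$ to the uniqueness of Werner's $\SLE_{8/3}$ loop measure $\cW$ (the $c=0$ case), which is already known from~\cite{werner2008conformally}. The key observation, exactly parallel to the chordal case, is that the conformal-restriction defect $\exp\bigl(\tfrac{c}{2}\Lambda^*(\eta,\partial D)\bigr)$ appearing in~\eqref{eq:mks} is the inverse of the probability that an independent Brownian loop soup $\cL_D$ of intensity $\tfrac{c}{2}$ in $D$ contains no loop meeting both $\eta$ and $\partial D$; equivalently, after sampling the soup, the loop $\eta$ together with all loop-soup clusters it touches gets ``filled in''. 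So I would first fix a reference domain, say $D=\D$, and a test event of the form $\{\eta\subset\D,\ \eta\cap A\neq\emptyset\}$ or more conveniently a bounded continuous functional of $\eta$ depending only on $\eta$ through its restriction to a compact subset of $\D$; the goal is to show that the $\mu_\D$-integral of such a functional is determined.

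The main steps are as follows. \emph{Step 1.} Given an MKS measure $\mu_\C$, form the sigma-finite measure $\mu_\D$ on loops in $\D$ via~\eqref{eq:mks}, sample an independent Brownian loop soup $\cL_\D$ of intensity $\tfrac{c}{2}$, and let $\wt\eta$ denote the filling of the union of $\eta$ with all $\cL_\D$-clusters meeting $\eta$. Using the Poissonian description of the soup exactly as in the chordal argument sketched after Theorem~\ref{thm:chordal-intersect}, show that the law of the outer boundary of $\wt\eta$ under $\mu_\D\otimes\bP_{\cL_\D}$, after conformal restriction, coincides with a $c=0$ MKS-type object; concretely, one checks that the ``filled'' measure satisfies the plain ($c=0$) restriction of Definition~\ref{def:MKS}, because every time one enlarges $D$ one simultaneously adds the soup loops that become available, and the two Radon--Nikodym factors telescope. \emph{Step 2.} Invoke Werner's characterization: the only non-trivial measure on simple loops satisfying the $c=0$ restriction is $\cW$ up to a positive constant. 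Hence the law of the outer boundary of $\wt\eta$ is $\cW$ up to a constant, \emph{independently of which} MKS measure $\mu_\C$ we started from. \emph{Step 3.} Invert the filling operation. Here one uses the result of Kemppainen--Werner~\cite{kemppainen2016nested} that the outer boundaries of $\SLE_{8/3}$-loop-soup clusters are distributed as $\SLE_\kappa^\lp$ with $c(\kappa)=c$: running the cluster construction backwards, the conditional law of $\eta$ given its filling $\wt\eta$ is prescribed by the loop-soup cluster law and does not depend on $\mu_\C$. Combining Steps 2 and 3, the finite-dimensional marginals of $\mu_\C$ (against the test functionals of Step~0) are pinned down up to one global multiplicative constant, which gives uniqueness.

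The step I expect to be the main obstacle is \emph{Step 1}, making the ``interpolate and iterate'' argument rigorous when $\mu_\C$ is only sigma-finite rather than a probability measure, and when $\eta$ is an unbounded-in-$D$ family of loops: one must control the iteration of adding soup clusters (it a priori could add arbitrarily many loops, pushing $\wt\eta$ out to fill $\D$) and argue that on the relevant event the procedure terminates, or terminates in the limit, with a well-defined filling whose boundary is a genuine simple loop. This is the loop analogue of the convergence issue handled in Section~\ref{subsec:chordal}, but here there are no boundary marked points to anchor the estimates, so one needs an a priori bound ensuring $\wt\eta$ stays strictly inside $\D$ with positive $\mu_\D$-mass after the restriction factor is applied; the finiteness built into the definition of non-triviality of $\mu_\C$, together with the exponential moment $\exp(\tfrac c2\Lambda^*)$ being integrable for $c\le 1$, should supply exactly this control, and matching the normalizations of the Brownian loop measure in~\cite{lawler2004brownian} versus the reversed/normalized one in~\cite{field2013reversed} is a bookkeeping point to be careful about. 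A secondary subtlety is \emph{Step 3}: one needs that the map $\eta\mapsto\wt\eta$ is, in the appropriate measure-theoretic sense, invertible given the soup, i.e.\ that no information about $\eta$ beyond what the cluster law re-randomizes is lost — this is precisely the content of the Kemppainen--Werner boundary identity, so it should go through, but it must be invoked as an equality of measures on loops, not merely of single samples.
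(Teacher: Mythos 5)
Your proposal takes a genuinely different route from the paper, but it contains a fatal directional error that kills it at Step~1, and a second genuine gap at Step~3 that would remain even if Step~1 were repaired.

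The paper's approach never tries to ``strip away'' the loop soup from an MKS loop. Instead, exactly as in the chordal case of Section~\ref{subsec:chordal}, the loop soup is used to \emph{erode the domain boundary inward}: one fixes a domain $D\ni 0$, successively removes the loops of an $\SLE_{8/3}$ loop soup touching $\partial D$ to produce a shrinking chain $D=D_0\supset D_1\supset\cdots\supset D_\infty$, and uses the telescoping Radon--Nikodym factors $\exp(-\tfrac c2\cW(\ell,\partial D_n))$ (Lemma~\ref{lem:ci-hmu} and Proposition~\ref{prop:priori}; the passage from $\Lambda^*$ to $\cW$ rests on the Carfagnini--Wang identity and is not just bookkeeping) to arrive at the closed formula $\mu_\C^0[\ell\not\subset U,\ell\subset D]=\lambda\bE\log f'_{U,\infty}(0)-\lambda\bE\log f'_{D,\infty}(0)$ of Theorem~\ref{thm:intersect}. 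Uniqueness then follows by the measure-theoretic argument of Werner. The unknown loop $\eta$ is never touched by the soup.

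\emph{Gap in Step 1.} You propose to fill $\eta$ with the Brownian-loop-soup clusters it meets and assert the resulting boundary is a $c=0$ MKS object. The sign is wrong. The forward construction — repeatedly used in the paper, see the remark after Theorem~\ref{thm:intersect}, Proposition~\ref{prop:werner-wu}, Proposition~\ref{prop:radial-eqv}, and \cite{kemppainen2016nested,werner2013cle} — is: a $c=0$ loop (from $\cW$) \emph{plus} the clusters of an intensity-$\tfrac c2$ loop soup touching it has the law of a $c$-MKS loop. Adding a loop soup \emph{raises} the central charge, it does not bring it back to $0$. Concretely, by Lemma~\ref{lem:c-mass}, each MKS measure restricted to loops surrounding $0$ has an intrinsic scaling constant $\lambda$, with $\lambda(\cW)=\tfrac\pi5$ and $\lambda(\SLE_\kappa^\lp)=\tfrac1\pi(\tfrac\kappa4-1)\cot(\pi(1-\tfrac4\kappa))\ne\tfrac\pi5$; filling $\eta$ by soup clusters multiplies $\|\eta\|_\infty$ by a random scale-invariant factor and therefore leaves $\lambda$ unchanged, so the filled measure cannot be $\cW$ when $c>0$. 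The Radon--Nikodym factors in \eqref{eq:mks} telescope only across a chain of \emph{nested domains}, not across the operation of thickening $\eta$.

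\emph{Gap in Step 3.} Even if Steps 1--2 produced a measure with a law independent of $\mu_\C$, the ``inversion'' is unjustified. The conditional law of $\eta$ given its filling $\wt\eta$ is determined by the \emph{joint} law of $(\eta,\cL_\D)$, and hence by the prior $\mu_\C$. Two different MKS candidates could a priori push forward to the same filled law while inducing different conditional laws of $\eta$ given $\wt\eta$; nothing in Kemppainen--Werner supplies a canonical inverse kernel. Their identity concerns outer/inner boundaries of pure loop-soup clusters, not the decomposition of ``one exotic loop plus the clusters it meets'', and in any case it is a forward statement, not a deconvolution result. This is why the paper's proof operates entirely on the complement of $\ell$ and never needs to reconstruct $\ell$ from anything.
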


Similar to Theorem~\ref{thm:chordal-unique}, we will prove Theorem~\ref{thm:loop-unique} by relating the MKS loop measure to loop-soup observables. However, since the Brownian loop soup on $\C$ has only one cluster, one needs to consider the $\SLE_{8/3}$ loop soup instead, as indicated in~\cite{kemppainen2016nested}. This approach is feasible thanks to~\cite{carfagnini2024onsager}; namely, we can study a related measure $\wh\mu_D$, which is defined in a similar fashion to \eqref{eq:mks} but with $\Lambda^*$ replaced by Werner's $\text{SLE}_{8/3}$ loop measure $\cW$ on $\C$, see \eqref{eq:hmu}.

Let $\mathcal{L}_\C^{\mathcal{W}}$ be the whole-plane $\text{SLE}_{8/3}$ loop soup of intensity $\frac{c}{2}$, that is, a Poisson point process with intensity measure $\frac{c}{2}\mathcal{W}$.
Suppose $D$ and $U$ are simply connected domains in $\C$ containing the origin, and $U \subset D$. Let $\mathcal{C}_{\partial D}$ be the closure of the union of $\partial D$ and all clusters in $\mathcal{L}_\C^{\mathcal{W}}$ that intersect $\partial D$. 
By~\cite[Section 2.1]{kemppainen2016nested} and inversion invariance of $\mathcal{L}^\cW_\C$, the origin a.s. has a positive distance from $\mathcal{C}_{\partial D}$.
Hence, we can define properly the (open) connected component of $\C \setminus \mathcal{C}_{\partial D}$ that contains the origin by $D_\infty$. Furthermore, let $f_{D,\infty}$ be the conformal maps from $D_\infty \to \D$ such that $f_{D,\infty}(0)=0$ and $f_{D,\infty}'(0)>0$. Define $f_{U,\infty}$ in the same way. 
Then we have the following analog of Theorem~\ref{thm:chordal-intersect}.

\begin{theorem}\label{thm:intersect}
	Let $c\in(0,1]$. Let $\mu_\C^0$ be the restriction of $\mu_\C$ to loops surrounding the origin. Then there is a constant $\lambda\in(0,\infty)$ such that for all $U \subset D$ as above,
	\[
	\mu_\C^0[\ell\not\subset U,\ell\subset D]=\lambda\bE \log f_{U,\infty}'(0)-\lambda\bE\log f_{D,\infty}'(0),
	\]
	where $\bE$ denotes the expectation with respect to the $\SLE_{8/3}$ loop soup $\mathcal{L}_\C^{\mathcal{W}}$.
\end{theorem}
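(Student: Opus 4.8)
The plan is to mimic the iteration scheme used for the chordal case (Theorem~\ref{thm:chordal-intersect}), but now at the level of loops surrounding the origin and with Werner's $\SLE_{8/3}$ loop soup $\cW$ playing the role that the Brownian loop soup played there. The starting point is the key identity from \cite{carfagnini2024onsager}: the MKS measure $\mu_\C$ can equivalently be described via the relation \eqref{eq:mks} with $\Lambda^*$ replaced by $\cW$, i.e. $d\wh\mu_D(\eta)/d\mu_\C(\eta)\,\mathbf 1_{\eta\subset D}=\mathbf 1_{\eta\subset D}\exp\bigl(\tfrac c2\cW(\eta,\partial D)\bigr)$, and these $\wh\mu_D$ are still conformally covariant. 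The crucial probabilistic observation — exactly parallel to the chordal proof — is that $\exp\bigl(-\tfrac c2\cW(\eta,\partial D)\bigr)$ is the probability that no loop of the intensity-$\tfrac c2$ loop soup $\mathcal L_\C^{\cW}$ intersects both $\eta$ and $\partial D$, conditionally on $\eta$. So writing the event $\{\ell\not\subset U,\ell\subset D\}$ and using \eqref{eq:mks}-type covariance to transfer from $D$ to $U$, one gets a first identity of the form
\[
\mu_\C^0[\ell\not\subset U,\ell\subset D]=\bE\otimes\mu_{\C,U}^0\Bigl[\bigl(\text{derivative factor for }f_{U}\bigr)\,\mathbf 1_{\ell\not\subset U_1}\Bigr],
\]
where $U_1$ is the filling of $U$ together with all soup loops meeting $\partial U$ (inside $D$), and $\mu_{\C,U}^0$ is the MKS measure read in $U$. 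One then iterates: at each step enlarge the obstacle by the soup clusters it touches, until no further loops can be added, reaching $D_\infty$ and $U_\infty$ in the limit.

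First I would make precise the single-step identity. Fix $D$ and $U$. Conditionally on the loop soup $\mathcal L_\C^{\cW}$ restricted to loops contained in $D$, let $U_1$ be the filling of $U$ union all soup-clusters meeting $\partial U$; on the event that the origin is not swallowed, the conformal map $f_{U_1}$ from the origin's component of $\C\setminus U_1$ to $\D$ is well defined. Using the loop-soup interpretation of the exponential term, integrating out the soup loops that could separate $\eta$ from $\partial U$ turns the $\wh\mu$ Radon–Nikodym relation into $\mu_\C^0[\ell\not\subset U,\ell\subset D]=\bE\bigl[\mu_\C^0[\ell\not\subset U_1,\ell\subset D]\bigr]$ up to the appropriate conformal covariance factor coming from the scaling dimension of $\mu_\C$ at the marked point — and here is where a genuine input is needed: by the conformal covariance in Definition~\ref{def:MKS}, $\mu_\C^0$ restricted to loops through a small neighbourhood transforms with some exponent, and the ``$\log$-derivative'' appears precisely because the relevant exponent for loops surrounding a point, after the logarithmic normalization forced by the infinite total mass, is linear. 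Concretely I expect $\mu_\C^0[\ell\not\subset U]$ (suitably interpreted, after subtracting a reference) to behave like $-\lambda\log f_U'(0)$ for the right constant $\lambda\in(0,\infty)$, which is the $c\in(0,1]$ analogue of the known $c=0$ computation for Werner's measure $\cW$ itself.

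Second, I would set up the iteration as a martingale/monotone convergence argument: define $U_n$ by repeatedly adjoining soup-clusters that touch the current boundary (always staying inside $D$ so that only finitely many loops of macroscopic size are involved at each scale), show $U_n\uparrow U_\infty$ where $U_\infty$ is the origin's component of $\C\setminus\cC_{\partial U}$, argue the origin a.s.\ survives (using that the intensity-$\tfrac c2$ loop soup with $c\le 1$ is subcritical, hence $\partial U$'s cluster does not surround the origin a.s.\ when $U\subset D$ and $0\in U$), and pass to the limit in $\bE[\,-\lambda\log f_{U_n}'(0)\,]=\mu_\C^0[\ell\not\subset U,\ell\subset D]+(\text{correction from }D)$. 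The $D$-side is handled symmetrically with $\partial D$ in place of $\partial U$; subtracting the two gives the clean statement $\mu_\C^0[\ell\not\subset U,\ell\subset D]=\lambda\bE\log f_{U,\infty}'(0)-\lambda\bE\log f_{D,\infty}'(0)$, with the additive constants cancelling because $U_\infty\subset D_\infty$ and the same soup is used for both.

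The main obstacle, as in the chordal case but more delicate here, is the convergence and integrability of the iteration: one must show that the Koebe-distortion factors $\log f_{U_n}'(0)$ converge (in $L^1$ with respect to the soup) to $\log f_{U_\infty}'(0)$, that the limiting quantity is finite, and that no mass is lost to the event of the origin being disconnected. For $c\in(0,1]$ this is exactly where subcriticality of the loop soup and the finiteness statements of \cite{carfagnini2024onsager} enter; the non-intersection event for a single macroscopic loop $\eta$ surrounding $0$ with $\partial U$ has positive probability uniformly, which prevents the iteration from degenerating, and a Koebe $1/4$-type bound controls $f_{U_n}'(0)$ from below. I would also need to verify the exponent/covariance claim $\mu_\C^0[\ell\not\subset U]\leftrightarrow-\lambda\log f_U'(0)$ carefully — this identifies $\lambda$ and is the place where the specific value of $c$ (equivalently $\kappa$) could in principle matter, though I expect $\lambda$ to come out as a universal multiple of $c$ (matching the $c=0$ degeneracy where both sides vanish) — reconciling it with the known $\SLE_\kappa^\lp$ construction of \cite{kemppainen2016nested}.
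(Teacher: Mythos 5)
Your overall framework — pass from $\mu_D$ to the $\cW$-based family $\wh\mu_D$ via the Carfagnini--Wang identity, interpret $\exp\left(-\tfrac{c}{2}\cW(\eta,\partial D)\right)$ as the probability that no loop of $\mathcal L_\C^\cW$ connects $\eta$ to $\partial D$, iterate a loop-soup exploration to reach $D_\infty$, and take a limit — is the same as the paper's, and this is the right skeleton. But the central step is misidentified. You write that the single-step identity should carry a ``derivative factor for $f_U$'' coming from ``the scaling dimension of $\mu_\C$ at the marked point,'' and that the $\log f'(0)$ is a logarithmic shadow of a conformal covariance exponent. This is not how it works: by Definition~\ref{def:MKS} the MKS family is conformally \emph{invariant}, $\phi\circ\mu_D=\mu_{D'}$ with no Jacobian at any point, and the same is true of $\wh\mu_D$ (Lemma~\ref{lem:ci-hmu}). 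No derivative factor appears at any iteration step. The $\lambda\log f'(0)$ enters for an entirely different reason: scaling invariance alone forces $\mu_\C^0(\|\ell\|_\infty\in[a,b))=\lambda\log\frac{b}{a}$ (a two-line semigroup argument, the paper's Lemma~\ref{lem:c-mass}), the measure $\wh\mu_\D^0$ inherits this logarithmic divergence up to an additive constant $\zeta_1$ (Proposition~\ref{prop:dc}, which needs the $\cW(\eps\D,\S^1)=O(\eps^\alpha)$ estimate), and then one compares the annulus $f_{D,\infty}(\eps\D)\setminus\eps\D$ to the round annulus $\{\eps\le|z|<f_{D,\infty}'(0)\eps\}$ via a Koebe-type distortion bound (Lemma~\ref{lem:distortion}). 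Without these ingredients your sketch has no mechanism to actually produce the $\log f'(0)$, only a heuristic that points in the wrong direction.

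Two further issues. First, your proposed iteration grows the inner obstacle $U$ by adjoining soup clusters hitting $\partial U$; but the Radon--Nikodym relation for $\wh\mu_U$ is only formulated for loops $\eta\subset U$, whereas the loops you care about satisfy $\eta\not\subset U$, so the interpolation step is not available in that direction. The paper instead always iterates by shrinking the \emph{outer} domain $D$ (revealing soup clusters hitting $\partial D$) with the inner cutoff $\eps\D$ fixed, defines $I(D)=\lim_{\eps\to0}\left(\wh\mu_\D^0[\ell\not\subset\eps\D]-\mu_\C^0[\ell\not\subset\eps\D,\ell\subset D]\right)$, shows $I(D)=\lambda\bE\log f_{D,\infty}'(0)$, and finally gets $U$ into the picture through $\mu_\C^0[\ell\not\subset U,\ell\subset D]=\left(I(U)-\zeta_1\right)-\left(I(D)-\zeta_1\right)$; the a priori finiteness from Proposition~\ref{prop:priori} is what makes these subtractions legitimate, and you do not address it. Second, your side remark that $\lambda$ should be ``a universal multiple of $c$, matching the $c=0$ degeneracy where both sides vanish'' is simply false: at $c=0$ the measure $\mu_{\C}=\cW$ is nondegenerate and $\lambda=\pi/5$, while for $c\in(0,1]$ with $\mu_\C=\SLE_\kappa^{\lp}$ one has $\lambda=\tfrac{1}{\pi}\left(\tfrac{\kappa}{4}-1\right)\cot\left(\pi\left(1-\tfrac{4}{\kappa}\right)\right)$, which is not proportional to $c$.
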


We emphasize that $\lambda$ depend on $c$ and the specific choice of $\mu_\C$.
Theorem~\ref{thm:intersect} is new and it directly implies Theorem~\ref{thm:loop-unique}. Note that for $c = 0$ and $\mu_\C = \mathcal{W}$, the corresponding result of Theorem~\ref{thm:intersect} has been provided in~\cite[Proposition 3]{werner2008conformally} with $\lambda=\frac{\pi}{5}$~\cite[Page 151]{werner2008conformally}.
Furthermore, if one views the sample of $\mu_\C$ as the outer boundary of the union of the loop from $\mathcal{W}$ and the clusters in $\mathcal{L}_\C^\mathcal{W}$ intersecting it, then Theorem~\ref{thm:intersect} is a direct consequence of its $c=0$ counterpart~\cite[Proposition 3]{werner2008conformally}.
Moreover, if $c\in(0,1]$ and $\kappa=\kappa(c)\in (\frac{8}{3},4]$, with $\mu_\C$ chosen as $\SLE_\kappa^\lp$ (i.e., the counting measure on loops of the full-plane $\text{CLE}_\kappa$), then the corresponding $\lambda$ in Theorem~\ref{thm:intersect} can be explicitly determined as $\lambda = \frac{1}{\pi}(\frac{\kappa}{4} - 1)\cot(\pi(1 - \frac{4}{\kappa}))$; 
see Corollary~\ref{cor:lambda0}.

\subsection{Applications}\label{sec:implication}

Here we provide several consequences of the results in Sections~\ref{sec:intro-chordal} and~\ref{sec:intro-loop}.

As mentioned to us by W. Werner, the \emph{reversibility} and \emph{duality} of SLE can be obtained from the uniqueness of these restriction measures, which was indeed considered (see e.g.~\cite{werner2005conformal}) before the proofs~\cite{zhan2008reversibility} (see also~\cite{miller2016imaginary2,lawler2021new})\footnote{We mention that the reversibility for non-simple SLE is established in~\cite{miller2016imaginary3}.} and~\cite{zhan2008duality,dubedat2009duality,miller2016imaginary1}. This approach to the reversibility of simple SLE loop measures was completed in~\cite[Section~7.1]{baverez2024cft}, as well as the $\kappa\leftrightarrow\frac{16}{\kappa}$ duality of $\SLE_\kappa$ loops~\cite[Theorem 7.1]{baverez2024cft} (this duality was also proved in a contemporaneous work~\cite[Theorem 1.2]{ang2024sle}). Using similar ideas, our results on the uniqueness of general chordal restriction measures (Theorem~\ref{thm:chordal-unique}) will provide alternate proofs in the regime $\kappa\in (\frac{8}{3},4]$ for the reversibility of chordal $\SLE_\kappa$ (and its variants), and the $\kappa\leftrightarrow\frac{16}{\kappa}$ duality in the chordal case.

The next application is about the SLE characterization of one-sided chordal restriction measures. 
Let $\gamma$ be the lower boundary of an one-sided chordal $0$-restriction sample with exponent $\alpha>0$, which is a simple path from $-1$ to $1$ on $\D$. Let $\Gamma$ be an independent $\CLE_\kappa$ on $\D$ with $\kappa\in(\frac{8}{3},4]$. Let $\eta$ be the lower boundary of the union of $\gamma$ and all loops of $\Gamma$ that $\gamma$ intersects. Then the law of $\eta$, which was characterized by $\SLE_\kappa(\rho)$ process in \cite{werner2013cle}, now can be deduced from Theorem~\ref{thm:chordal-unique} directly.
\begin{proposition}[{\cite[Theorem 1.1, Theorem 2.1]{werner2013cle}}]\label{prop:werner-wu}
	Let $\kappa\in(\frac{8}{3},4]$ and $\alpha>0$. 
	The law of $\eta$ is an $\SLE_\kappa(\rho)$ on $\D$ from $-1$ to $1$ with force point $e^{i\pi-}$, where $\rho\in(-2,\infty)$ satisfies $\alpha=\frac{(\rho+2)(\rho+6-\kappa)}{4\kappa}$.
\end{proposition}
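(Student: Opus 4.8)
The plan is to identify the law of $\eta$ as a generalized one-sided chordal restriction measure and then invoke the uniqueness from Theorem~\ref{thm:chordal-unique}. First I would recall that the lower boundary $\gamma$ of a one-sided chordal $0$-restriction sample with exponent $\alpha$ is, by the Lawler--Schramm--Werner classification \cite{lawler2003conformal}, an $\SLE_{8/3}(\rho_0)$-type curve, and in particular the closed region $K$ it bounds (together with the upper unit semicircle) satisfies the standard restriction property \eqref{eq:chordal-res} with $c=0$. Next, let $\Gamma$ be an independent $\CLE_\kappa$ on $\D$ for $\kappa\in(\tfrac83,4]$, whose loops form (via the Brownian loop soup of intensity $\tfrac{c}{2}$ with $c=c(\kappa)$) the loop-soup clusters; taking the filled union of $K$ with all $\CLE_\kappa$ loops it meets produces a set $\wh K\in\mathcal K_-$. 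The key point, already noted in the paragraph following Theorem~\ref{thm:chordal-intersect}, is that this construction yields a sample satisfying the one-sided chordal $c$-restriction with the \emph{same} exponent $\alpha$: indeed, conditionally on $K$, the probability that no $\CLE_\kappa$ loop (equivalently, no loop-soup cluster) links $K$ to a given $A\in\mathcal Q_-$ is exactly $\exp(-\tfrac{c}{2}\Lambda_\D(K,A))$, so \eqref{eq:chordal-res} for $\wh K$ follows from \eqref{eq:chordal-res} for $K$ by integrating out the loop soup. Hence $\wh K$ satisfies the one-sided chordal $c$-restriction with exponent $\alpha$, and $\eta$ is its lower boundary.

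The second ingredient is to show that the $\SLE_\kappa(\rho)$ curve from $-1$ to $1$ with force point $e^{i\pi-}$, for $\rho$ determined by $\alpha=\tfrac{(\rho+2)(\rho+6-\kappa)}{4\kappa}$, is \emph{also} the lower boundary of a one-sided chordal $c$-restriction measure with exponent $\alpha$. This is essentially contained in the construction underlying \cite[Proposition 6.2]{qian2021generalized} (one-sided version): one builds a set $K'\in\mathcal K_-$ whose lower boundary is $\SLE_\kappa(\rho)$ and checks \eqref{eq:chordal-res} directly via the known restriction/Loewner computations for $\SLE_\kappa(\rho)$ processes, the map $\rho\mapsto\alpha$ being exactly the one appearing in the conformal weight $\tfrac{(\rho+2)(\rho+6-\kappa)}{4\kappa}$ of the boundary-perturbation formula. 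Since the one-sided chordal $c$-restriction measure with exponent $\alpha$ is \emph{unique} by Theorem~\ref{thm:chordal-unique} (valid since $\alpha>0$), the two sets $\wh K$ and $K'$ have the same law, hence so do their lower boundaries, which gives the claimed law of $\eta$.

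The main obstacle is the bookkeeping in matching the exponent $\alpha$ with the parameter $\rho$ consistently across the two descriptions: one must ensure that the $\alpha$ used for $\gamma$ (the $c=0$ sample) is literally preserved under the loop-soup filling operation, and that the same $\alpha$ corresponds, through the $\SLE_\kappa(\rho)$ restriction formula, to the stated relation $\alpha=\tfrac{(\rho+2)(\rho+6-\kappa)}{4\kappa}$ with the correct force-point location $e^{i\pi-}$ (i.e.\ on the side of the lower boundary). This amounts to carefully tracking which boundary arc carries the conformal weight and confirming that the exponent $\tfrac{6-\kappa}{2\kappa}$ threshold in Theorem~\ref{thm:chordal-unique} is irrelevant here because the one-sided case only requires $\alpha>0$. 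A secondary technical point is that one should verify $\eta$ indeed has the regularity (simple curve from $-1$ to $1$) and measurability needed to view its law as a measure on $\mathcal K_-$; this follows from standard properties of $\CLE_\kappa$ boundary touching for $\kappa\le4$ together with the fact that $\gamma$ is a simple curve. Once these identifications are in place, the proposition is immediate from uniqueness.
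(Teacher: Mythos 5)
Your overall architecture is exactly the paper's: show that the filled union of the $c=0$ sample with the $\CLE_\kappa$ loops it meets is a one-sided chordal $c$-restriction sample with the same exponent $\alpha$, show that the upper part of $\SLE_\kappa(\rho)$ with $\alpha=\frac{(\rho+2)(\rho+6-\kappa)}{4\kappa}$ satisfies the same property (this is the known computation the paper cites), and conclude by the uniqueness in Theorem~\ref{thm:chordal-unique}, which only needs $\alpha>0$ in the one-sided case. So the route is the same; the issue is with how you justify the first ingredient.

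The identity you invoke there is false as stated: conditionally on $K$, the quantity $\exp(-\frac{c}{2}\Lambda_\D(K,A))$ is the probability that no \emph{individual} Brownian loop of $\mathcal{L}_\D$ intersects both $K$ and $A$; it is \emph{not} the probability that no loop-soup cluster (equivalently, no $\CLE_\kappa$ loop) links $K$ to $A$, since clusters can connect $K$ to $A$ through chains of loops none of which meets both sets, so that probability is strictly smaller. Moreover, even with the correct interpretation, "integrating out the loop soup" against $\exp(-\frac{c}{2}\Lambda_\D(K,A))$ cannot directly yield \eqref{eq:chordal-res} for the filled set $\wh K$, because the Radon--Nikodym derivative required by Definition~\ref{def:chordal} involves $\exp(-\frac{c}{2}\Lambda_\D(\wh K,A))$, with the \emph{augmented} set $\wh K$ in the exponent, not $K$. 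The correct verification (this is what makes the step "straightforward" but not trivial) splits the soup into the loops avoiding $A$ and those hitting $A$: the loops avoiding $A$ form a loop soup in $\D\setminus A$, which is conformally invariant under $f_A$ and independent of $K$; building $\wh K$ from $K$ and this sub-soup, the event $\{\wh K\cap A=\emptyset\}$ is precisely the event that no loop hitting $A$ touches $\wh K$, whose conditional probability given $(\wh K)$ is $\exp(-\frac{c}{2}\Lambda_\D(\wh K,A))$; combining this with the $c=0$ restriction property of $K$ and conformal invariance of the sub-soup gives \eqref{eq:chordal-res} for $\wh K$ with the same $\alpha$. With this correction (and your other steps, which match the paper's citations for the $\SLE_\kappa(\rho)$ side), the argument goes through as in the paper.
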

\begin{proof}
	Since the $\CLE_\kappa$ on $\D$ can be realized as the outer boundaries of outermost clusters in a Brownian loop soup on $\D$ of intensity $\frac{c}{2}$~\cite{sheffield2012conformal}, it is straightforward to check that the connected component of $\D\setminus\eta$ above $\eta$ satisfies the one-sided chordal $c$-restriction with exponent $\alpha$ as in Definition~\ref{def:chordal}. On the other hand, as indicated in~\cite{MR2118865}, the upper part of $\SLE_\kappa(\rho)$ on $\D$
	satisfies the one-sided chordal $c$-restriction~\eqref{eq:chordal-res} with exponent $\alpha$ as well (see~\cite[Page 10-11]{werner2013cle} for a detailed justification). Combined, the result follows from the uniqueness part of Theorem~\ref{thm:chordal-unique}.
\end{proof}

Analogously, the two-sided case of Theorem~\ref{thm:chordal-unique} immediately gives the following. A radial counterpart will be given later by Proposition~\ref{prop:radial-eqv}.

\begin{proposition}\label{prop:chordal-eqv}
	Let $\kappa\in(\frac{8}{3},4]$ and $\alpha\ge\frac{6-\kappa}{2\kappa}$. Let $K$ be a two-sided chordal $c$-restriction with exponent $\alpha$. Let $\Gamma$ be an independent $\CLE_\kappa$ on $\D$. Let $\mathcal{C}(K)$ be the union of $K$ and all loops of $\Gamma$ that $K$ intersects. Then the filling of $\mathcal{C}(K)$ has the same law as that constructed in \cite[Proposition~6.2]{qian2021generalized} using variants of $\SLE_\kappa$.
\end{proposition}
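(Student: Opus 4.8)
The plan is to verify that the filling of $\mathcal{C}(K)$ satisfies the two-sided chordal $c$-restriction with exponent $\alpha$ in the sense of Definition~\ref{def:chordal}, and then invoke the uniqueness part of Theorem~\ref{thm:chordal-unique} together with the known fact (from \cite[Proposition~6.2]{qian2021generalized}) that the $\SLE_\kappa$-variant construction also satisfies this restriction property. Write $\wh K := \Fill(\mathcal{C}(K))$. Since $K \in \mathcal{K}$ is simply connected with $K \cap \S^1 = \{-1,1\}$ and the CLE loops are disjoint simple loops, one checks that $\wh K$ is again a simply connected closed subset of $\overline{\D}$ with $\wh K \cap \S^1 = \{-1,1\}$, so $\wh K \in \mathcal{K}$; measurability with respect to the relevant $\sigma$-algebra is routine.

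The core step is the restriction identity. Realize $\Gamma$ as the outer boundaries of the outermost clusters of a Brownian loop soup $\mathcal{L}_\D$ on $\D$ of intensity $\frac{c}{2}$, as in \cite{sheffield2012conformal}; note that for the non-intersection events $\{\wh K \cap A = \emptyset\}$ with $A \in \mathcal{Q}$ it is equivalent to use $\mathcal{C}(K)$ built from all loop-soup clusters $K$ meets, since a cluster avoids $A$ iff its outer boundary does and $A$ is in the unbounded-in-$\D\setminus A$ sense away from it. Fix $A \in \mathcal{Q}$. On the event $\{\wh K \cap A = \emptyset\}$, no loop of $\mathcal{L}_\D$ that touches $K$ can touch $A$, and conversely any loop touching $A$ is disjoint from $\wh K$; this is exactly the combinatorial fact that the loop soup restricted to $\D \setminus A$ is again a loop soup of the same intensity, independent of the loops touching $A$. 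Conditioning on $K$ and using that $K$ itself satisfies \eqref{eq:chordal-res} with parameter $c$, one computes the Radon–Nikodym derivative of the law of $\wh K$ against its image under $f_A$: the factor $(f_A'(1)f_A'(-1))^\alpha$ comes directly from the $K$-law, and the surviving loop-mass term reassembles to $\exp(-\frac{c}{2}\Lambda_\D(\wh K, A))$ because the loops of $\mathcal{L}_\D$ meeting $\wh K$ but not $A$ are precisely those meeting $K$ but not $A$ (a cluster meets $A$ iff it meets $\wh K \setminus K$ or is separated, so no extra loops are gained). This is essentially the same interpolation-by-loop-soup computation underlying Theorem~\ref{thm:chordal-intersect}, and can be phrased so as to quote it directly: by Theorem~\ref{thm:chordal-intersect} applied to $K$, $\P[K \cap A = \emptyset]$ already equals $\bE[\mathbf{1}_E (f_{\wt A}'(1)f_{\wt A}'(-1))^\alpha]$, and reorganizing the expectation as ``first sample $\mathcal{L}_\D$, then $K$'' shows the law of $\wh K$ has the claimed derivative.

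Given the restriction identity, $\wh K$ satisfies the two-sided chordal $c$-restriction with exponent $\alpha$. Since $\alpha \geq \frac{6-\kappa}{2\kappa}$, Theorem~\ref{thm:chordal-unique} guarantees that such a measure is unique. The construction of \cite[Proposition~6.2]{qian2021generalized} produces, for every $\alpha \geq \frac{6-\kappa}{2\kappa}$, a sample satisfying the same restriction property, hence the two laws coincide. The main obstacle I anticipate is the bookkeeping in the restriction identity: one must check carefully that passing from $K$ to $\wh K = \Fill(\mathcal{C}(K))$ neither adds nor removes loops of $\mathcal{L}_\D$ relevant to the event $\{ \cdot \cap A = \emptyset\}$ — in particular that filling in bounded complementary components does not engulf a loop that meets $A$ — and that the outer-boundary vs.\ full-cluster distinction is immaterial for these events; both are intuitively clear from planarity but deserve a clean argument, ideally reduced to the already-proved Theorem~\ref{thm:chordal-intersect} rather than redone by hand.
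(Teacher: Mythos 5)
Your high-level strategy---show that $\Fill(\mathcal{C}(K))$ satisfies the two-sided chordal $c$-restriction and then invoke the uniqueness part of Theorem~\ref{thm:chordal-unique}---is exactly what the paper intends (it states the proposition ``immediately'' follows from the uniqueness, in parallel with Proposition~\ref{prop:radial-eqv}). However, there is a gap in the restriction-verification step, and you have been led astray by a likely typo in the statement. Comparing with the radial analogue Proposition~\ref{prop:radial-eqv}, where $K$ is explicitly a \emph{standard} ($c=0$) radial restriction sample, and with the construction described in the introduction, the sample $K$ here should be a standard ($c=0$) two-sided chordal restriction sample. If $K$ already carried the $c$-restriction property and one added an independent $\CLE_\kappa$ (equivalently, independent loop-soup clusters of intensity $\tfrac{c}{2}$), the two loop-mass contributions would combine and the result would \emph{not} satisfy the $c$-restriction with the same $c$; your key step ``use that $K$ itself satisfies \eqref{eq:chordal-res} with parameter $c$, and the surviving loop-mass reassembles to $\exp(-\tfrac{c}{2}\Lambda_\D(\wh K,A))$'' is where this goes wrong, since it would effectively double-count.

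Even under the corrected reading ($K$ standard), the Radon--Nikodym bookkeeping you sketch is too quick. The event $\{\wh K\cap A=\emptyset\}$ given $\{K\cap A=\emptyset\}$ is a \emph{cluster}-connectivity event (no chain of loops links $K$ to $A$), not the single-loop event whose $\bP$-probability is $\exp(-\tfrac{c}{2}\Lambda_\D(K,A))$, so the factor does not ``reassemble'' by a one-line accounting; one either needs the iterative scheme behind Theorem~\ref{thm:chordal-intersect}, or should simply cite the known fact that this filled-union construction satisfies the $c$-restriction (the two-sided analogue of~\cite[Lemma~2.1]{qian2021generalized} in the radial case and of~\cite{werner2013cle} in the one-sided case). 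Your closing remark---comparing with Theorem~\ref{thm:chordal-intersect}---does point toward a valid shortcut: for the standard sample $K_0$, one has directly
$\P[\Fill(\mathcal{C}(K_0))\cap A=\emptyset]=\bE\big[\P_{K_0}[K_0\cap\wt A=\emptyset\mid\cL_\D]\big]=\bE\big[{\bf 1}_E(f_{\wt A}'(1)f_{\wt A}'(-1))^\alpha\big]$
using only the $c=0$ restriction property, and Theorem~\ref{thm:chordal-intersect} applied to the $\SLE_\kappa$-variant measure of~\cite[Proposition~6.2]{qian2021generalized} gives the identical right-hand side; since the $\sigma$-algebra on $\cK$ is generated by the events $\{K\cap A=\emptyset\}$, the two laws agree. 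But as written, the ``reorganize the expectation'' sentence does not make this precise, and it does not by itself verify the restriction property for $\wh K$.
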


Based on Theorem~\ref{thm:intersect}, for any connected compact set $V$ separating $0$ and $\infty$, we can express the $\mu_\C^0$-mass of loops intersecting $V$ by some $\SLE_{8/3}$ loop-soup observables; see \eqref{eq:et} below. As before, let $\mathcal{C}_V$ be the union of $V$ and all clusters in $\mathcal{L}_\C^{\mathcal{W}}$ that intersect $V$. Let $\Omega_{V,\infty}$ and $\Omega^*_{V,\infty}$ be the connected components of $\C\setminus \mathcal{C}_V$ containing $0$ and $\infty$, respectively. Let $V_\infty:=\C\setminus(\Omega_{V,\infty}\cup\Omega^*_{V,\infty})$, $f_{V_\infty}:\Omega_{V,\infty}\to\D$ be conformal fixing $0$, and $h_{V_\infty}:\Omega_{V,\infty}^*\to \D^*:=\C\setminus\overline\D$ be conformal fixing $\infty$.

\begin{theorem}\label{thm:electrical-thickness}
For any $\mu_\C$ with parameter $c\in(0,1]$, there is a constant $\zeta\in\R$ such that for any connected compact set $V$ separating $0$ and $\infty$,
	\begin{equation}\label{eq:et}
    \mu_\C^0[\ell\cap V\neq\emptyset]=\lambda\bE\left[\theta(V_\infty)\right]+\zeta,
    \end{equation}
	where $\theta(V_\infty):=\log|f_{V_\infty}'(0)|-\log|h_{V_\infty}'(\infty)|$ is the so-called electrical thickness of $V_\infty$~\cite{kw04}, and $\bE$ is the expectation for the $\SLE_{8/3}$ loop soup $\mathcal{L}_\C^{\mathcal{W}}$. In fact, $\zeta$ can be related to  $\lambda$ through \eqref{eq:zeta}.
\end{theorem}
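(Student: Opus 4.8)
The plan is to reduce the statement about a general connected compact set $V$ separating $0$ and $\infty$ to the nested-domain situation already handled in Theorem~\ref{thm:intersect}, and then to package the resulting expression using the electrical thickness. First I would recall that a loop $\ell$ in the support of $\mu_\C^0$ (i.e.\ surrounding the origin) intersects $V$ if and only if it is \emph{not} contained in the connected component $\Omega_V$ of $\C\setminus V$ containing $0$, \emph{or} it does not surround the connected component $\Omega_V^*$ of $\C\setminus V$ containing $\infty$; since $\ell$ surrounds $0$, a short topological argument shows $\ell\cap V\neq\emptyset$ is equivalent to ``$\ell\not\subset\Omega_V$ or $\ell$ disconnects $\infty$ from $\partial\Omega_V^*$ inside $\Omega_V^*$''. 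I would then write $\mu_\C^0[\ell\cap V\neq\emptyset]$ as a sum (or difference) of two terms of the shape appearing in Theorem~\ref{thm:intersect}: one controlling the ``inner'' obstruction at $0$ and one controlling the ``outer'' obstruction at $\infty$, the latter obtained by applying Theorem~\ref{thm:intersect} after the inversion $z\mapsto 1/z$, under which $\mu_\C^0$ is invariant (it is conformally invariant and inversion fixes the set of loops surrounding $0$, swapping the roles of $0$ and $\infty$).

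Concretely, fix a large disk $D=R\D$ and a small disk $U=r\D$ with $\overline{U}\subset\Omega_V$ and $\overline{D^c}\subset\Omega_V^*$ after inversion, and decompose
\[
\mu_\C^0[\ell\cap V\neq\emptyset]=\mu_\C^0[\ell\not\subset\Omega_V,\ \ell\subset D]+\mu_\C^0[\ell\ \text{does not surround}\ \Omega_V^*,\ \ell\supset U^{c}\text{-type condition}]+(\text{finite correction}),
\]
where the finite correction collects the $\mu_\C^0$-mass of loops escaping $D$ or entering $U$, which is finite by non-triviality of $\mu_\C$ and independent of $V$. Applying Theorem~\ref{thm:intersect} to the pair $(\Omega_V\cap D,\ U)$ gives the first term as $\lambda\,\bE\log f_{U,\infty}'(0)-\lambda\,\bE\log f_{\Omega_V\cap D,\infty}'(0)$; running the same argument through inversion gives the second term as $\lambda$ times the difference of two expected log-derivatives at $\infty$. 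Recombining, the $U$- and $D$-dependent pieces are absorbed into the constant $\zeta$, and the $V$-dependent part is exactly $\lambda\,\bE\big[\log|f_{V_\infty}'(0)|-\log|h_{V_\infty}'(\infty)|\big]=\lambda\,\bE[\theta(V_\infty)]$, once one checks that, on the $\SLE_{8/3}$ loop soup, the component of $\C\setminus\mathcal C_V$ at $0$ (resp.\ at $\infty$) coincides with $\Omega_{V,\infty}$ (resp.\ $\Omega_{V,\infty}^*$) and that the uniformizing maps match $f_{V_\infty}$, $h_{V_\infty}$.

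The main obstacle I expect is the bookkeeping of the finite correction term and the verification that it is genuinely independent of $V$: one must argue that adding/removing the loops that exit $D$ or enter $U$ changes $\mu_\C^0[\cdot]$ by a quantity depending only on $R$ and $r$ (hence only on the fixed reference domains), which requires a monotone-limit argument letting $R\to\infty$, $r\to0$ together with a uniform integrability bound so that $\bE\log f_{\Omega_V\cap D,\infty}'(0)\to\bE\log f_{V_\infty}'(0)$; the subtlety is that $\theta(V_\infty)$ can be large when loop-soup clusters attached to $\partial D$ ``pull in'' toward $V$, so one needs the a priori finiteness of $\mu_\C^0[\ell\cap V\neq\emptyset]$ (again from non-triviality, since $V$ separates $0$ and $\infty$ and hence every such loop has diameter bounded below) to license the rearrangement. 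The explicit relation between $\zeta$ and $\lambda$ in \eqref{eq:zeta} then comes from evaluating the correction on the reference configuration $V=\S^1$, where $\theta(V_\infty)$ and $\mu_\C^0[\ell\cap\S^1\neq\emptyset]$ can be computed directly, pinning $\zeta$ in terms of $\lambda$ and a loop-soup constant.
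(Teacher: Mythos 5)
Your overall strategy is the right one and in outline matches the paper: split the loops hitting $V$ into an inner contribution (loops in the $0$-component of $\C\setminus V$, cut off at a small disk) handled by Theorem~\ref{thm:intersect} (equivalently Proposition~\ref{prop:elog}), an outer contribution handled by the same theorem after the inversion $z\mapsto 1/z$ (using inversion invariance of $\mu_\C^0$ and of $\mathcal{L}_\C^{\mathcal W}$, which converts $\bE\log f'$ at $0$ into $-\bE\log h'$ at $\infty$), and a remaining constant pinned down on the reference circle $\S^1$, giving $\zeta$ as in \eqref{eq:zeta}. However, there is one genuine gap: you justify the a priori finiteness of $\mu_\C^0[\ell\cap V\neq\emptyset]$ (and hence the legitimacy of your rearrangements and of letting $R\to\infty$, $r\to 0$) ``from non-triviality, since every such loop has diameter bounded below.'' Non-triviality only bounds the mass of loops of diameter $>\delta$ that are \emph{contained in a fixed bounded domain}; loops surrounding $0$ that hit $V$ can be arbitrarily large, reaching out toward $\infty$, and their mass is not controlled by non-triviality (indeed, bounding it crudely via Theorem~\ref{thm:intersect} with an expanding outer domain $R\D$ gives a bound that diverges like $\lambda\log R$). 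This finiteness is exactly the content of Lemma~\ref{lem:finiteness} in the paper, whose proof is not soft: it first invokes the uniqueness theorem (Theorem~\ref{thm:loop-unique}) to replace $\mu_\C$ by $\SLE_\kappa^{\lp}$, and then uses the loop-soup construction and Markov property of $\CLE_\kappa$ to dominate the number of nested loops crossing $\S^1$ by a geometric random variable. Without this input your ``finite correction'' may be an $\infty-\infty$ manipulation, so you need to supply this step (or an equivalent argument controlling large loops).

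Two smaller points. First, as written, your displayed decomposition is not correct before taking limits: the correction term (loops hitting $V$ that escape $R\D$ or enter $r\D$) does depend on $V$; the $V$-independence of the constant only emerges in the limit, and the paper sidesteps this by decomposing the mass of loops hitting the annulus $\A_{\varepsilon,R}$ relative to the \emph{fixed} circle $\S^1$ (Lemma~\ref{lem:c0}, using Proposition~\ref{prop:dc} and inversion invariance), so that the constant $\zeta=\zeta_2-2\zeta_1$ is manifestly independent of $V$; your telescoping can be repaired along these lines, but the bookkeeping you flag is precisely where the paper's Lemma~\ref{lem:c0} does the work. Second, on the reference configuration $V=\S^1$ nothing is ``computed directly'': $\mu_\C^0[\ell\cap\S^1\neq\emptyset]$ is only shown to be a finite constant $\zeta_2$, and $\bE[\theta(V_\infty)]=2\bE\log f_{\D,\infty}'(0)=2\zeta_1/\lambda$ comes from Proposition~\ref{prop:elog} and inversion symmetry, which is how \eqref{eq:zeta} arises.
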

The proof of Theorem~\ref{thm:electrical-thickness} will be given in Section~\ref{sec:ele-thik}. Theorem~\ref{thm:electrical-thickness} can also be straightforwardly extended to $c=0$ and $\mu_\C=\mathcal{W}$, in which case $\zeta=\frac{\sqrt{6}}{15}$; see Remark~\ref{rmk:c0}. However, for general $c\in(0,1]$ and $\mu_\C=\SLE_\kappa^\lp$, the exact value of $\zeta$ remains unknown.

\medskip
\textbf{Organization of the paper.}
In Section~\ref{sec:general}, we present the proof of Theorems~\ref{thm:chordal-unique} and~\ref{thm:chordal-intersect}, and discuss its radial counterpart. In Section~\ref{sec:loop}, we prove Theorems~\ref{thm:loop-unique} and~\ref{thm:intersect}. Then we show Theorem~\ref{thm:electrical-thickness} in Section~\ref{sec:ele-thik}. Finally in Section~\ref{sec:further}, we give some further remarks and discussions.

\section{Generalized chordal and radial restriction measures}\label{sec:general}

\subsection{The chordal case}\label{subsec:chordal}
In this section, we prove Theorem~\ref{thm:chordal-intersect} by using a Markovian exploration process (Definition~\ref{def:ep}) and a recursive formula (Proposition~\ref{prop:recursion}) derived from the restriction property \eqref{eq:chordal-res}. Based on these results, we then finish the proof Theorem~\ref{thm:chordal-unique}.

We first focus on the one-sided case. Let $c\in(0,1]$, $\alpha\in \R$, and $A\in\mathcal{Q}_-$.
Let $\bP$ and $\bE$ be the law and the expectation with respect to the Brownian loop soup $\cL_\D$ on $\D$ (of intensity $\frac{c}{2}$ as assumed). 
Now, we define a natural exploration process associated with a set $A\in\mathcal{Q}_-$ by revealing the loops connected to $A$ in $\cL_\D$ in a Markovian way.
\begin{definition}[Exploration process]\label{def:ep}
    Suppose $A\in\mathcal{Q}_-$. Let $A_0:=A$, and for all $n\ge 0$, let $A_{n+1}$ be the filling of the union of $A_n$ and all loops in $\cL_\D$ that intersect $A_n$.
\end{definition}

Recall that the filling of a set is closed, and hence each $A_n$ is closed. 
As introduced, $\wt A$ denotes the filling of the union of $A$ and all clusters in $\mathcal{L}_\D$ that intersect $A$.
Note that $A_n\cap\S^1=\wt A\cap\S^1=A\cap\S^1$ for all $n\ge0$ (see the discussion below \eqref{eq:E} and note that $A\subset A_n\subset \wt A$).
Recall that $w_0$ is some fixed point on $\S^1\setminus A$ other than $-1,1$, and hence $w_0$ also lies in $\S^1\setminus A_n=\S^1\setminus \wt A$. Let $f_{A_n,w_0}$ be the unique conformal map from $\D\setminus A_n$ to $\D$ that fixes $-1,1,w_0$. 
For sets $B_1, B_2\subset\C$, the Hausdorff distance between $B_1$ and $B_2$ is given by
\[
d_H(B_1,B_2):=\inf\{\eps>0:B_1\subset B_2^{(\eps)}, B_2\subset B_1^{(\eps)}\}, \text{ where } B_i^{(\eps)}:=\{z: \dist(z,B_i)<\eps\}.
\]
We first show that the exploration process $A_n$ converges to $\wt A$ in Hausdorff distance. 

\begin{lemma}\label{lem:dH}
     For any $A\in\mathcal{Q}_-$, we have $\bP$-a.s. $\lim_{n\to\infty}d_H(A_n,\wt A)=0$, and $f_{A_n,w_0}^{-1}\to f_{\wt A,w_0}^{-1}$ under the local uniform topology. 
\end{lemma}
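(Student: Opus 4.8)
The plan is to first prove the Hausdorff convergence $d_H(A_n,\wt A)\to 0$ and then deduce the conformal-map convergence by a standard Carathéodory-kernel argument. Since $A_0\subset A_1\subset A_2\subset\cdots$ is an increasing sequence of compact subsets of $\overline\D$, the limit $A_\infty:=\overline{\bigcup_n A_n}$ exists and $d_H(A_n,A_\infty)\to 0$ automatically (an increasing sequence of compact sets always converges in Hausdorff distance to the closure of its union). So the real content is the identification $A_\infty=\wt A$. The inclusion $A_n\subset\wt A$ for every $n$ is clear by induction: $A_0=A\subset\wt A$, and if $A_n\subset\wt A$ then any loop of $\cL_\D$ meeting $A_n$ meets $\wt A$, hence lies in the cluster structure defining $\wt A$ and is contained in $\wt A$ (which is filled), so $A_{n+1}\subset\wt A$; taking closures gives $A_\infty\subset\wt A$.

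For the reverse inclusion $\wt A\subset A_\infty$, I would argue that every loop $\ell$ of $\cL_\D$ belonging to a cluster that (transitively) touches $A$ is already swallowed at some finite stage. Indeed, if $\ell$ meets a cluster chain $A=:K_0,K_1,\dots,K_m=\,$(cluster of $\ell$) where consecutive clusters are linked through finitely many loops, then a finite induction on the chain length shows $\ell\subset A_n$ for some $n$: the key point is that a single loop-soup cluster is itself the union of a \emph{locally finite} countable family of loops, and any loop in the cluster is connected to $A$ through a \emph{finite} sub-chain of loops (this finiteness is where one must be slightly careful—it uses that the loop soup restricted to $\D$ has, for any $\delta>0$, only finitely many loops of diameter $>\delta$, and a compactness/covering argument to reduce an a priori infinite connecting chain to a finite one). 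Having shown each such loop lies in some $A_n$, and since $\wt A$ is the filling of $A$ together with the union of all such loops, we get $\wt A\subset\Fill\big(\bigcup_n A_n\big)$. Finally, because each $A_n$ is already filled (it is the filling of $A_{n-1}$ union loops) and the $A_n$ increase, their union differs from its own filling only on a $d_H$-negligible set—more precisely $\Fill(A_\infty)=A_\infty$ since $\overline{\D}\setminus A_\infty$ has the connected component containing $w_0$ as its only relevant piece once one checks no bounded holes are created in the limit; this yields $\wt A\subset A_\infty$.

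Once $d_H(A_n,\wt A)\to 0$ is established, the convergence $f_{A_n}^{-1}\to f_{\wt A}^{-1}$ locally uniformly follows from the Carathéodory kernel theorem: the domains $\D\setminus A_n$ decrease to the kernel $\D\setminus\wt A$ with respect to any interior point (say the image of $0$ under a fixed normalization, or simply an interior point of the component containing $w_0$), Hausdorff convergence of the complements $A_n\to\wt A$ implies kernel convergence of $\D\setminus A_n$ to $\D\setminus\wt A$, and since the maps $f_{A_n}^{-1}:\D\to\D\setminus A_n$ are uniformly normalized by the three boundary fixed points $-1,1,w_0$ (equivalently, one can transfer to the standard normalization at an interior point), Carathéodory's theorem gives locally uniform convergence $f_{A_n}^{-1}\to f_{\wt A}^{-1}$ on $\D$.

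\textbf{Main obstacle.} The delicate step is the reverse inclusion $\wt A\subset A_\infty$, specifically the claim that every loop in a cluster connected to $A$ is absorbed at a \emph{finite} stage of the exploration. A priori a cluster is built from a countably infinite collection of loops, and membership of a given loop in the ``cluster of $A$'' could require an infinite connecting chain; ruling this out (and more generally showing the exploration does not ``leak mass at infinity'' in the iteration index) requires the local finiteness of the Brownian loop soup and a short topological argument, likely phrased via the fact that a connected union of loops that is relatively compact in $\D$ is a finite-step-connected union up to any positive diameter threshold. Everything else—monotonicity, the forward inclusion, and the passage from Hausdorff to Carathéodory convergence—is routine.
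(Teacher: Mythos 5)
Your overall architecture matches the paper's: first establish $d_H(A_n,\wt A)\to0$ and then upgrade this to locally uniform convergence of the inverse conformal maps. The second step is fine in both approaches (the paper uses a normal-families argument with uniqueness of the subsequential limit; your Carathéodory-kernel route works too, though the boundary normalization at $-1,1,w_0$ requires a small Möbius/transfer step that the normal-families argument avoids). The forward inclusion $A_n\subset\wt A$ is also essentially correct, modulo a brief topological point about crossing the boundary of the filled region that you elide.

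The genuine gap is exactly where you flag it: the reverse inclusion. Two issues. First, your description of a loop-soup cluster as ``the union of a \emph{locally finite} countable family of loops'' is wrong; a cluster contains infinitely many loops in every neighborhood of almost every one of its points — what is true (and what the paper invokes via \cite[Lemma~9.7]{sheffield2012conformal}) is that only finitely many \emph{clusters} in $\cL_\D$ have diameter exceeding any fixed $\eps>0$. Second and more seriously, the central claim that ``any loop in the cluster is connected to $A$ through a finite sub-chain of loops'' is not something that follows from local finiteness plus a ``compactness/covering argument''; it is a nontrivial structural statement about Brownian loop-soup clusters. The paper does not prove this; it cites \cite[Theorem~4.1]{van2016random}, which states that each cluster $\cC_i$ can be approximated in Hausdorff distance (after filling) by finite subclusters $\cC_i^N$, i.e., chains of finitely many loops. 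That is the essential external input, and your proposal neither supplies it nor names it. Note also that the paper's argument uses only this Hausdorff approximation, not the stronger (and unestablished) claim that \emph{every} loop of the cluster belongs to some finite connecting chain. As written, your ``main obstacle'' paragraph correctly isolates the difficulty but stops short of resolving it, so the proof is incomplete at its crux.
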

\begin{proof}
   For any $\eps>0$, by \cite[Lemma 9.7]{sheffield2012conformal}, there are only finitely many clusters in $\cL_\D$  with diameter larger than $\eps$. We denote those that intersect $A$ by $\cC_1,\ldots,\cC_m$, which are at positive distance from each other. Below, we use $\Fill$ to denote the filling of a set for brevity. Then, $A':=\Fill(A\cup \cC_1\cup\ldots\cup\cC_m)\subset \wt A$, and $d_H(A',\wt A)\le \eps$.
    According to \cite[Theorem 4.1]{van2016random}, every cluster $\cC_i$, $1\le i\le m$, can be approximated by a sequence of finite subclusters $\cC_i^N$ of $\cC_i$ (i.e., a chain of finite number of loops in $\cC_i$) such that $\lim_{N\to\infty}d_H(\Fill(\cC_i^N),\Fill(\cC_i))=0$. Since each finite subcluster $\cC_i^N$ is contained in $A_n$ for sufficiently large $n$, we conclude $\lim_{n\to\infty}d_H(A_n,\wt A)=0$. Similarly, \cite[Theorem 4.1]{van2016random} also implies $\lim_{n\to\infty}d_H(\D\setminus A_n,\D\setminus\wt A)=0$. Since $\{f_{A_n,w_0}^{-1}\}_{n\ge1}$ is the sequence of conformal maps from $\D$ to $\D\setminus A_n$ fixing $\pm1$ and $w_0$, by a standard extension of Carath\'eodory kernel theorem (see Proposition~\ref{prop:bdy-caratheodory} for details), we conclude $f_{A_n,w_0}^{-1}\to f_{\wt A,w_0}^{-1}$ locally uniformly.
\end{proof}

We derive the following recursive formula for the non-intersection probability of restriction samples by the loop-soup interpolation.
\begin{proposition}\label{prop:recursion}
Let $c\in(0,1]$ and $\alpha\in \R$.
Suppose $\P$ satisfies the one-sided chordal $c$-restriction with exponent $\alpha$ as in Definition~\ref{def:chordal}.
For any $A\in\mathcal{Q}_-$, we have
\begin{equation*}
    \P[K\cap A=\emptyset]=\bE\otimes\E\!\left[(f_{A_n,w_0}'\!(1)f_{A_n,w_0}'\!(-1))^\alpha \exp\!\Big(\!\!-\!\frac c2 \Lambda_{\D}\big((f_{A_{n-1},w_0}\!\!\circ\! f_{A_n,w_0}^{-1})(K),f_{A_{n-1},w_0}\!(A_n\!\setminus\! A_{n-1})\big)\!\Big)\!\right].
\end{equation*}
\end{proposition}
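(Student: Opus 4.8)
The plan is to prove the identity by induction on $n$, with the key mechanism being the interpretation of the exponential weight in \eqref{eq:chordal-res} as a loop-soup non-intersection probability. For the base case $n=0$ we have $A_0 = A$ and the claimed formula reduces to the restriction property \eqref{eq:chordal-res} integrated over $K$: indeed $\P[K\cap A=\emptyset] = \E_A[(f_A'(1)f_A'(-1))^\alpha \mathbf{1}_{K\cap A = \emptyset}]$ where $\E_A$ is the expectation for a restriction sample in $\D$ (before applying $f_A$), which after expressing $\mathbf{1}_{K\cap A = \emptyset}$ against the loop soup $\cL_\D$ gives the $n=0$ case. More precisely, one should set up the statement so that the $n$-th formula is
\[
\P[K\cap A=\emptyset] = \bE\otimes\E\left[(f_{A_n}'(1)f_{A_n}'(-1))^\alpha \exp\Big(-\tfrac c2 \Lambda_{\D}\big((f_{A_{n-1}}\circ f_{A_n}^{-1})(K),f_{A_{n-1}}(A_n\setminus A_{n-1})\big)\Big)\right],
\]
understanding $A_{-1} = \emptyset$ and $f_{A_{-1}} = \mathrm{id}$ in the $n=0$ case so the exponential becomes $\exp(-\frac c2 \Lambda_\D(K, A))$ — but in fact it is cleaner to prove the more symmetric intermediate identity first.

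The core inductive step goes as follows. Condition on $\cL_\D$ and hence on the increasing sequence $A_0 \subset A_1 \subset A_2 \subset \cdots$. Given $A_{n-1}$, apply the restriction property in the domain $\D\setminus A_{n-1}$: pushing forward by $f_{A_{n-1}}$, the law of $f_{A_{n-1}}(K)$ conditioned on $\{K\cap A_{n-1} = \emptyset\}$ is a restriction sample with the Radon–Nikodym weight $(f_{A_{n-1}}'(1)f_{A_{n-1}}'(-1))^\alpha$ times the exponential loop-mass term. Now I interpolate: the loops of $\cL_\D$ that were used to build $A_n$ from $A_{n-1}$ are, after pushing forward by $f_{A_{n-1}}$, an independent Brownian loop soup in $\D$ of intensity $\frac c2$ (by conformal invariance and the restriction property of the loop soup — loops contained in $\D\setminus A_{n-1}$ pushed forward stay a loop soup), and the event that none of them hits $f_{A_{n-1}}(K)$ has conditional probability exactly $\exp(-\frac c2 \Lambda_\D(f_{A_{n-1}}(K), f_{A_{n-1}}(A_n \setminus A_{n-1})))$. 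Absorbing this into the formula and then re-mapping by $f_{A_n}\circ f_{A_{n-1}}^{-1}$ from $\D\setminus f_{A_{n-1}}(A_n\setminus A_{n-1})$ to $\D$, one picks up the further factor $(f_{A_n}'(1)f_{A_n}'(-1))^\alpha / (f_{A_{n-1}}'(1)f_{A_{n-1}}'(-1))^\alpha$ — the conformal-map derivatives compose, since $f_{A_n} = g \circ f_{A_{n-1}}$ where $g = f_{A_n}\circ f_{A_{n-1}}^{-1}$ fixes $\pm 1$ — and an exponential term with $\Lambda_\D$ evaluated at the image of $K$ under $f_{A_n}$ versus the image of $A_{n+1}\setminus A_n$. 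Iterating this one step converts the level-$n$ identity into the level-$(n+1)$ identity.

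The bookkeeping obstacle is making the conformal-map normalization consistent: the maps $f_{A_n}$ in Definition~\ref{def:ep} are pinned by three points $-1, 1, w_0$, so $g = f_{A_n}\circ f_{A_{n-1}}^{-1}$ is the map $\D\setminus f_{A_{n-1}}(A_n\setminus A_{n-1})\to\D$ fixing $-1, 1, f_{A_{n-1}}(w_0)$, and one must check $w_0$ (or its images) stay on the boundary away from the relevant sets so that these maps are well-defined and the derivative product $f'_{A_n}(1)f'_{A_n}(-1)$ genuinely factors as $g'(1)g'(-1)\cdot f'_{A_{n-1}}(1)f'_{A_{n-1}}(-1)$ by the chain rule — this is fine because $A\in\mathcal Q_-$ means everything stays in the lower half, and $\pm 1$ remain accessible. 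A second, more substantive point: I must justify that conditionally on $A_{n-1}$, the new loops added to form $A_n$ are distributed as a loop soup whose law in $\D\setminus A_{n-1}$ is conformally invariant — this follows from the restriction property of the Brownian loop measure (loops in $\cL_\D$ contained in $\D\setminus A_{n-1}$, pushed forward by $f_{A_{n-1}}$, form a loop soup of intensity $\frac c2$ on $\D$) together with the standard fact that conditioning a PPP on its restriction to one measurable region leaves the restriction to the complementary region an independent PPP. The remaining details — that the conditional non-intersection probability of a loop soup with a fixed set is $\exp(-\frac c2 \Lambda_\D(\cdot,\cdot))$, and that $\Lambda_\D$ transforms correctly under conformal maps — are standard properties of $\mu^{\mathrm{BL}}$ recalled in the references cited in the excerpt. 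Finally, the two-sided case is handled identically, replacing $\mathcal Q_-$ by $\mathcal Q$ throughout; no new ideas are needed.
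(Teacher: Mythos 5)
Your argument is correct in substance and is essentially the paper's proof: interpret the exponential weight in \eqref{eq:chordal-res} as the conditional probability that the not-yet-revealed loops of $\cL_\D$ avoid the sample, apply the restriction property in the successive domains $\D\setminus A_{n-1}$, and use conformal invariance of $\mu^{\rm BL}$ together with the chain rule at $\pm 1$, iterating in $n$. One labelling slip to fix: the conditionally independent loop soup at stage $n$ consists of the loops of $\cL_\D$ contained in $\D\setminus A_{n-1}$ (those among them hitting $A_n\setminus A_{n-1}$ are exactly the loops that build $A_{n+1}$ from $A_n$, and their avoidance of $f_{A_{n-1}}^{-1}$-preimages produces the exponent $\Lambda_\D\big(f_{A_{n-1}}(\cdot),f_{A_{n-1}}(A_n\setminus A_{n-1})\big)$), not ``the loops used to build $A_n$ from $A_{n-1}$'', which by definition intersect $A_{n-1}$ and are not even in the domain of $f_{A_{n-1}}$; your own justification paragraph states the correct fact, so only this identification needs correcting.
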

\begin{proof}
Given $K\cap A=\emptyset$, there exists $\eps>0$ (random w.r.t. $\P$) such that $K^{(2\eps)}\cap A=\emptyset$ since both $K$ and $A$ are closed. Then, the number of Brownian loops in $\mathcal{L}_\D$ that intersect both $K^{(\eps)}$ and $A$ is a.s. finite since these loops have diameter at least $\eps$\footnote{Here we use the basic fact that the loops in the Brownian loop soup are locally finite, i.e., for any $\eps>0$, there are only finitely many loops in $\mathcal{L}_\D$ that have diameter greater than $\eps$
(equivalently, the $\mu^{\rm BL}$-mass of loops in $\D$ that have diameter greater than $\eps$ is finite). Indeed, repeating the similar computations as in the proof of \cite[Lemma~2.13]{camia2017scaling} (setting the mass function $m\equiv 0$ therein), we see that the total mass is bounded by $\int_\eps^2 \frac{\pi}{4}\frac{(2+r)^2}{2\pi r^2} dr + \int_\eps^2 \frac{\pi}{4} (2+r)^2 \int_0^r \frac{2}{\pi t^2} e^{-r^2/(288t)}\, dt\, dr<\infty$.}. Hence, almost surely, $K\cap A_1=\emptyset$ iff $K\cap A_1'=\emptyset$, where $A_1'$ is the union of $A$ and all the loops in $\cL_\D$ that intersect $A$, and $A_1$ is the filling of $A_1'$. Therefore, by the property of Poisson Point process, given $K$ such that $K\cap A=\emptyset$, we have
\begin{equation}\label{eq:KA1}
    \bP[K\cap A_1=\emptyset]=\bP[K\cap A'_1=\emptyset]=\exp(-\frac c2 \Lambda_{\D}(K,A)).
\end{equation}
Now, according to~\eqref{eq:chordal-res}, it holds that
\begin{align}
    \P[K\cap A=\emptyset]&=\E_{ A}\big[(f_{A,w_0}'(1)f_{A,w_0}'(-1))^\alpha\exp\big(-\frac c2 \Lambda_{\D}(K,A)\big)\big]\notag\\
    &=\bE\otimes\E_{A}\left[(f_{A,w_0}'(1)f_{A,w_0}'(-1))^\alpha {\bf 1}_{K\cap A_1=\emptyset}\right]\label{eq:recursion-2}\\ 
    &=\bE\otimes\E_{A_1}\big[(f_{A_1,w_0}'(1)f_{A_1,w_0}'(-1))^\alpha \exp\big(-\frac c2 \Lambda_{\D\backslash A}(K,A_1)\big)\big],\label{eq:recursion-1}
\end{align}
where we used \eqref{eq:KA1}
in~\eqref{eq:recursion-2}, and used the following equation  in~\eqref{eq:recursion-1} 
$$\frac{d\P_{A}(K)}{d\P_{A_1}(K)}{\bf 1}_{K\cap A_1=\emptyset}={\bf 1}_{K\cap A_1=\emptyset}(\phi'(1)\phi'(-1))^\alpha\exp(-\frac c2 \Lambda_{\D\backslash A}(K,A_1)) \ \text{ for } \phi=f_{A_1,w_0}\circ f_{A,w_0}^{-1}.$$ 
By induction, we have for any $n\ge1$, 
\begin{equation*}
    \P[K\cap A=\emptyset]=\bE\otimes\E_{ A_n}\left[(f_{A_n,w_0}'(1)f_{A_n,w_0}'(-1))^\alpha \exp\left(-\frac c2 \Lambda_{\D\backslash A_{n-1}}(K,A_n)\right)\right].
\end{equation*}
Now, by the conformal invariance of Brownian loop measure, 
\[
\Lambda_{\D\backslash A_{n-1}}(K,A_n)=\Lambda_{\D}\big(f_{A_{n-1},w_0}(K),f_{A_{n-1},w_0}(A_n\setminus A_{n-1})\big).
\]
Moreover, if $K$ is sampled from $\P$, then $f_{A_n,w_0}^{-1}(K)$ is distributed as $\P_{A_n}$. The lemma then follows.
\end{proof}

Next, we give a simple lemma on the Brownian loop soup.
\begin{lemma}\label{lem:reaching}
    Let $0<a<\frac{1}{2}$. Suppose $B$ is a connected subset of $\D$ such that $\partial B\cap\S^1\neq\emptyset$ and $B\cap(1-a)\S^1\neq\emptyset$. Let $H$ be the event that there is a loop in $\mathcal{L}_\D$ that intersects both $B$ and $\frac12\D$. Then, there exists a constant $\delta=\delta(c,a)>0$ such that for all $B$,
    \[
    \bP[H]\ge \delta.
    \]
\end{lemma}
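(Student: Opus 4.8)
The plan is to prove the lower bound uniformly in $B$ by a compactness-and-continuity argument combined with a crude but explicit geometric estimate on the Brownian loop measure. Here is how I would organize it.

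\textbf{Step 1: Reduce to a single ``crossing'' loop.} Recall that $\mathcal{L}_\D$ is a Poisson point process with intensity $\frac{c}{2}\mu^{\rm BL}$ restricted to loops in $\D$. For any fixed Borel set $\mathcal{A}$ of loops with $\mu^{\rm BL}$-mass $m$, the probability that $\mathcal{L}_\D$ contains at least one loop in $\mathcal{A}$ equals $1 - e^{-\frac{c}{2} m} \ge (1 - e^{-1}) \cdot \min(1, \frac{c}{2}m)$ when one has an a priori bound, and in any case it is $\ge \frac{c}{2} m \, e^{-\frac{c}{2} m}$. So it suffices to produce, uniformly over all admissible $B$, a sub-collection $\mathcal{A}_B$ of loops $\ell \subset \D$ that each intersect both $B$ and $\frac12 \D$, with $\mu^{\rm BL}(\mathcal{A}_B) \ge m_0 = m_0(c,a) > 0$ and also $\mu^{\rm BL}(\mathcal{A}_B)$ bounded above (the latter is automatic since $\mu^{\rm BL}(\{\ell \subset \D : \mathrm{diam}(\ell) > r\}) < \infty$ for each $r$).

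\textbf{Step 2: Geometric construction of $\mathcal{A}_B$.} Since $B$ is connected, meets $\S^1$, and meets $(1-a)\S^1$, it contains a connected path reaching from near the boundary inward to radius $1-a$; in particular $B$ intersects the annulus $\{1 - a \le |z| \le 1\}$ and, being connected and touching both $\S^1$ and $(1-a)\S^1$, it must cross the annulus $\{1 - a \le |z| \le 1 - a/2\}$ in the sense that $B \cap \{|z| = r\} \ne \emptyset$ for every $r \in [1-a, 1-a/2]$. Cover the boundary circle by finitely many arcs; by pigeonhole $B$ meets a ``sector box'' $S = \{z = \rho e^{i\vartheta} : 1 - a \le \rho \le 1 - a/2,\ |\vartheta - \vartheta_0| \le \beta\}$ for some $\vartheta_0$, where $\beta = \beta(a)$ is a fixed small angle and, crucially, $\mathrm{dist}(S, \S^1) \ge a/2$ so that loops near $S$ stay inside $\D$. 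The key point: let $\mathcal{A}_B$ be the set of loops $\ell$ with $\ell \subset \D$, $\mathrm{diam}(\ell) \le 10$ (vacuous here), that intersect both $S$ and $\frac12\D$ \emph{and} stay within distance $a/2$ of the straight segment joining the ``center'' of $S$ to the origin. Any such loop automatically intersects $B$ (because $B$ crosses $S$) and intersects $\frac12\D$.

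\textbf{Step 3: Uniform lower bound on $\mu^{\rm BL}(\mathcal{A}_B)$ and conclusion.} By rotational invariance of $\mu^{\rm BL}$, the mass $\mu^{\rm BL}(\mathcal{A}_B)$ depends on $B$ only through $\vartheta_0$, and by rotational invariance it is in fact independent of $\vartheta_0$; call it $m_0$. That $m_0 > 0$ follows because the event defining $\mathcal{A}_B$ (up to rotation) is an open condition on loop space containing, e.g., all small perturbations of a fixed round loop that passes through $S$ and through $\frac12\D$ while hugging the segment, and $\mu^{\rm BL}$ assigns positive mass to every nonempty open set of loops that are bounded away from being points; concretely one can lower-bound $m_0$ using the explicit density $\mu^{\rm BL} = \int_0^\infty \frac{1}{2t}\, \mu_t^{\mathrm{br}}\, dt$ and the Brownian bridge probability of the tube event, which is positive for, say, $t \in [1, 2]$. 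Hence $\bP[H] \ge \bP[\mathcal{L}_\D \cap \mathcal{A}_B \ne \emptyset] = 1 - e^{-\frac{c}{2} m_0} =: \delta(c,a) > 0$, uniformly in $B$.

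\textbf{Main obstacle.} The routine parts are the Poisson zero-one computation and the reduction via pigeonhole; the genuinely delicate point is making the geometric reduction \emph{uniform} over the infinite family of sets $B$ — in particular arguing that every admissible $B$ crosses some fixed-size sector box whose distance to $\S^1$ is bounded below (so that the interpolating loops lie in $\D$), and then that a single rotation-invariant positive mass $m_0$ works for all of them. I expect that step, rather than any estimate on $\mu^{\rm BL}$ itself, to require the most care.
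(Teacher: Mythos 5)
Your Step 1 (the Poisson computation) and the positivity of the Brownian loop mass of a fixed tube event in Step 3 are routine and fine, but there is a genuine gap in Step 2: the claim that every loop in $\mathcal{A}_B$ ``automatically intersects $B$ (because $B$ crosses $S$)'' is not justified, and is in fact false for the class of loops you define. The pigeonhole argument only yields that $B$ \emph{meets} the sector box $S$ (since $B$ may spiral inside the annulus, you cannot even guarantee that $B$ contains a radial crossing of any fixed-size box $S$); and even if $B$ did cross $S$ radially, a loop that merely touches $S$ at some point while hugging the segment towards the origin can pass through a part of $S$ disjoint from $B$ and never meet $B$. Nothing in your definition of $\mathcal{A}_B$ creates a topological obstruction between the loop and $B$, so the inclusion $\{\mathcal{L}_\D\cap\mathcal{A}_B\neq\emptyset\}\subset H$ fails and the uniform lower bound does not follow.

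What is needed --- and what the paper does --- is to choose loops that must \emph{separate} a point of $B$ from the rest of $B$. Fix $z\in B\cap(1-a)\S^1$, let $L$ be the segment from $z$ to $z/(2|z|)$, and let $\Lambda(z)$ be the set of loops contained in $L(\frac{a}{10})\setminus L(\frac{a}{20})$ that surround the inner tube $L(\frac{a}{20})$, where $L(b)$ denotes the $b$-neighbourhood of $L$. Since $B$ is connected, contains $z$ (inside the inner tube) and has points arbitrarily close to $\S^1$ (hence outside the outer tube, which stays at distance at least $\frac{9a}{10}$ from $\S^1$), the set $B$ joins the bounded complementary component of any such loop to the unbounded one, so it must intersect the loop; moreover such a loop wraps around the inner end of the tube near $z/(2|z|)$ and therefore enters $\frac12\D$, while staying in $\D$. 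Rotation invariance then gives $\mu^{\rm BL}[\Lambda(z)]\ge\delta'(a)>0$ uniformly in $z$, and the Poisson step concludes exactly as in your Step 1. Your argument could be repaired along these lines (replace ``loops touching $S$'' by ``loops winding around a thin tube anchored at a point of $B\cap(1-a)\S^1$''), but as written the crucial intersection with $B$ is not established.
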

\begin{proof}
    Let $z$ be some point on $B\cap(1-a)\S^1$. Let $L$ be the line segment connecting $z$ and $z/(2|z|)$. For any $b>0$, define $L(b):=\{w: \dist(w,L)\le b\}$. Let $\Lambda(z)$ be the set of loops in $L(\frac{a}{10})\setminus L(\frac{a}{20})$ that surround $L(\frac{a}{20})$. Note that $B$ contains a curve intersecting both $\partial L(\frac{a}{10})$ and $\partial L(\frac{a}{20})$. Every loop in $\Lambda(z)$ will intersect $B$. Hence, $\bP[H]\ge\bP[\mathcal{L}_\D\cap\Lambda(z)\neq\emptyset]$.
    By the conformal invariance of $\mu^{\rm BL}$ and the fact that the total mass of loops in any annulus with nonzero winding number is positive~\cite[Proposition 3.9]{lawler2011defining}, we get that $\mu^{\rm BL}[\Lambda(z)]\ge \bar\delta(a)$ for some constant $\bar\delta(a)>0$ independent of $z$. This allows us to conclude the proof.
\end{proof}

The following lemma shows that $f_{A_{n-1},w_0}(A_n\setminus A_{n-1})$ will stay close to the boundary of $\D$ as $n\to\infty$.
\begin{lemma}\label{lem:distance}
For any $A\in\mathcal{Q}_-$, let $d_n:=\sup\{{\rm dist}(z,\S^1)\mid z\in  f_{A_{n-1},w_0}(A_n\setminus A_{n-1})\}$.
Under $\bP$, $d_n\to0$ in probability.
\end{lemma}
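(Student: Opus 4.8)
The plan is to show that $d_n \to 0$ in probability by combining the almost-sure convergence $A_n \to \wt A$ from Lemma~\ref{lem:dH} with a quantitative control on the conformal maps $f_{A_{n-1}}$ near the boundary, using Lemma~\ref{lem:reaching} to rule out large ``unexplored'' pieces persisting. First I would fix a small $\eps > 0$ and work on the high-probability event (using \cite[Lemma 9.7]{sheffield2012conformal}) that $\cL_\D$ has only finitely many clusters of diameter exceeding some threshold, so that $\wt A$ is at positive distance from $-1, 1, w_0$ and the exploration essentially terminates (up to diameter $\eps$) after finitely many steps.

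The key step is to translate a point $z \in f_{A_{n-1}}(A_n \setminus A_{n-1})$ with $\dist(z, \S^1) \ge \eta$ back through $g_n := f_{A_{n-1}}^{-1}$ into a statement about the \emph{unexplored} loop cluster attached to $A_{n-1}$ at step $n$. Concretely, $A_n \setminus A_{n-1}$ is the filling of the loops in $\cL_\D$ newly attached to $A_{n-1}$; if its conformal image under $f_{A_{n-1}}$ reaches distance $\ge \eta$ from $\S^1$, then by the Koebe distortion theorem and the convergence $f_{A_{n-1}}^{-1} \to f_{\wt A}^{-1}$ locally uniformly (Lemma~\ref{lem:dH}), the preimage $A_n \setminus A_{n-1}$ contains a connected set of macroscopic diameter (bounded below in terms of $\eta$ and the fixed geometry near $\partial\D \setminus \{-1,1,w_0\}$), which moreover touches $\partial A_{n-1}$. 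I would then invoke Lemma~\ref{lem:reaching}: such a macroscopic connected set attached to $A_{n-1}$ and reaching into the bulk would, with probability bounded below by some $\delta > 0$, be linked by a further loop of $\cL_\D$ to a fixed disk, hence this cluster is not yet exhausted — but an event of probability $\ge \delta$ cannot recur infinitely often since the total number of relevant clusters is a.s. finite. Thus $\{d_n \ge \eta \text{ infinitely often}\}$ has probability zero, which gives $d_n \to 0$ a.s., hence in probability. (One may need to apply Lemma~\ref{lem:reaching} after conformally mapping $\D \setminus A_{n-1}$ back to $\D$, using conformal invariance of the Brownian loop soup and the fact that $f_{A_{n-1}}$ distorts the relevant scales boundedly on the high-probability event where $\wt A$ stays away from the three marked points.)

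The main obstacle I anticipate is the uniformity: Lemma~\ref{lem:reaching} requires the connected set $B$ to reach a fixed macroscopic scale $(1-a)\S^1$ and to touch $\S^1$, but a priori $f_{A_{n-1}}(A_n \setminus A_{n-1})$ could be a long thin tentacle that is far from $\S^1$ only in a region where $f_{A_{n-1}}$ has enormous derivative (near an ``almost-pinch'' of the domain $\D \setminus A_{n-1}$). Handling this requires being careful that the distortion estimates are applied on the event where $A_{n-1}$, and hence $\wt A$, does not come too close to creating such pinches separating $-1, 1, w_0$; this is exactly where the a.s.\ finiteness of large clusters and the fixed positive distances from the lemma in \ref{lem:dH} are used. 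An alternative, possibly cleaner route is to argue directly: since $f_{A_{n-1}}^{-1} \to f_{\wt A}^{-1}$ uniformly on compacts of $\D$, for any $\eta > 0$ the set $\{z \in \D : \dist(z,\S^1) \ge \eta\}$ is a fixed compact set on which $f_{A_{n-1}}^{-1}$ and $f_{\wt A}^{-1}$ are uniformly close; and $f_{\wt A}^{-1}$ maps it into the component of $\overline\D \setminus \wt A$ at positive distance from $\wt A$. So if $f_{A_{n-1}}(A_n \setminus A_{n-1})$ met $\{\dist(\cdot,\S^1)\ge\eta\}$ for infinitely many $n$, the corresponding preimages $A_n \setminus A_{n-1}$ would have diameter bounded below, contradicting that $\sum_n \mathrm{diam}(A_n \setminus A_{n-1}) < \infty$ on the a.s.\ event that only finitely many clusters are large and each cluster is exhausted (each loop of $\cL_\D$ lies in $A_n \setminus A_{n-1}$ for exactly one $n$). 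I would present this second argument as the backbone and use the first only if the summability needs reinforcing.
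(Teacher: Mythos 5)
Your proposal assembles the right ingredients (the a.s.\ convergence $f_{A_{n-1}}^{-1}\to f_{\wt A}^{-1}$ from Lemma~\ref{lem:dH}, Lemma~\ref{lem:reaching}, and the key observation that $f_{\wt A}^{-1}$ maps the fixed compact $\{z:\dist(z,\S^1)\ge\eta\}$ to a compact subset of $\D\setminus\wt A$ at positive distance from $\wt A$), but both of your concluding mechanisms have genuine gaps. The ``backbone'' argument ends by invoking $\sum_n \mathrm{diam}(A_n\setminus A_{n-1})<\infty$, which is asserted but never justified and does not follow from ``finitely many large clusters'' plus ``each loop is used at exactly one step'': within a single cluster the exploration attaches loops over infinitely many steps, the total diameter of the loops in a cluster is infinite, and Hausdorff convergence $A_n\to\wt A$ only says the increments stay in a thin neighborhood of $A_{n-1}$, not that their diameters are summable (or even tend to $0$). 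The backup argument's final step, ``an event of probability $\ge\delta$ cannot recur infinitely often since the number of relevant clusters is a.s.\ finite,'' is not a valid deduction either --- events of probability $\ge\delta$ can recur indefinitely. What the paper actually does at this point is show that the event produced by Lemma~\ref{lem:reaching} is \emph{impossible}: applying the lemma in the image domain (where $B=f_{A_{n-1}}(A_n\setminus A_{n-1})$ touches $\S^1$ and reaches $(1-\eps_0)\S^1$, so no Koebe/pinching issues arise) and using conformal invariance of the loop soup, the fresh loop forces $A_{n+1}\setminus A_n$ to meet $f_{A_{n-1}}^{-1}(\frac{1}{2}\D)$, which for large $n$ lies (with high probability) in the fixed compact $f_{\wt A}^{-1}(\frac{1+\sigma}{2}\D)\subset\D\setminus\wt A$; since $A_{n+1}\subset\wt A$, this event has probability zero, contradicting the assumed lower bound $\rho\,\delta(\eps_0)$.

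Ironically, your second argument already contains a complete and shorter proof that you walked past: if $z_n\in f_{A_{n-1}}(A_n\setminus A_{n-1})$ satisfies $\dist(z_n,\S^1)\ge\eta$, then $w_n:=f_{A_{n-1}}^{-1}(z_n)$ lies in $A_n\subset\wt A$; on the other hand, uniform convergence of $f_{A_{n-1}}^{-1}$ to $f_{\wt A}^{-1}$ on the compact $\{|z|\le 1-\eta\}$, together with the positive distance of the compact set $f_{\wt A}^{-1}(\{|z|\le 1-\eta\})$ from the closed set $\wt A$, forces $\dist(w_n,\wt A)>0$ for all large $n$ --- an outright contradiction, with no diameters, no summability, and no use of Lemma~\ref{lem:reaching}, and it even yields a.s.\ convergence of $d_n$ rather than convergence in probability. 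So the route you sketch is salvageable and in fact close to (arguably cleaner than) the paper's contradiction-in-probability argument, but as written both of your closing steps are unjustified and need to be replaced by one of the two correct conclusions above.
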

\begin{proof}
Otherwise, there is $\varepsilon_0>0$, $\rho>0$ and a sequence $n_i\to\infty$ such that $\bP[d_{n_i}>\varepsilon_0]>\rho$. Then by Lemma~\ref{lem:reaching}, given $d_{n_i}>\varepsilon_0$, the probability that there is a loop in $\cL_\D$ intersecting both $f_{A_{n_i-1},w_0}(A_{n_i}\setminus A_{n_i-1})$ and $\frac 12\D$ 
is bounded from below by a constant $\delta(\varepsilon_0)>0$ only depending on $\varepsilon_0$. Hence, by conformal invariance of the Brownian loop soup,
\begin{equation}\label{eq:prob-markov}
    \bP\Big[f_{A_{n_i-1},w_0}(A_{n_i+1}\setminus A_{n_i})\cap\Big(\frac{1}{2}\D\Big)\neq\emptyset\Big]>\rho\delta(\varepsilon_0).
\end{equation}
By Lemma~\ref{lem:dH},
$f_{A_n,w_0}^{-1}\to f_{\wt A,w_0}^{-1}$ locally uniformly a.s.  Then for any $\sigma>0$ and sufficiently large $n$, 
\begin{equation}
\bP\left[f_{A_n,w_0}^{-1}\left(\frac{1}{2}\D\right)\not\subset f_{\wt A,w_0}^{-1}\left(\frac{1+\sigma}{2}\D\right)\right]<\frac{1}{2}\rho\delta(\varepsilon_0).
\end{equation}
Combining with~\eqref{eq:prob-markov}, we obtain that
\(
    \bP\left[\left(A_{n_i+1}\setminus A_{n_i}\right)\cap f_{\wt A,w_0}^{-1}\left(\frac{1+\sigma}{2}\D\right)\neq\emptyset\right]>\frac{1}{2}\rho\delta(\varepsilon_0)
\)
for sufficiently large $n_i$, which contradicts with that $A_{n_i+1}\subset \wt A$ and $\wt A\cap f_{\wt A,w_0}^{-1}\left(\frac{1+\sigma}{2}\D\right)=\emptyset$.
\end{proof}

The following lemma deals with the behavior of $(f_{A_n,w_0})_{n\ge1}$ near the boundary points $-1,1$. Recall that $f_{A_n,w_0}$ is normalized such that it preserves $\pm 1$ and $w_0$ (see the paragraph below Definition~\ref{def:ep}).
\begin{lemma}\label{lem:boundary}
$\bP$-almost surely, for any $A\in\mathcal{Q}_-$, all of the followings hold. First, $f_{A_n,w_0}'(\pm 1)\downarrow f_{\wt A,w_0}'(\pm 1)$. Next, the distance between $f_{A_{n-1},w_0}(A_n\setminus A_{n-1})$ and $\{-1,1\}$ has a (random) uniform positive lower bound. Furthermore, for any $K\in\mathcal{K}$, $(f_{A_{n-1},w_0}\circ f_{A_n,w_0}^{-1})(K)\to K$ in Hausdorff distance.
\end{lemma}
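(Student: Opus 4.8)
\textbf{Proof proposal for Lemma~\ref{lem:boundary}.}

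The plan is to handle the three assertions in order, exploiting that we have already established in Lemma~\ref{lem:dH} that $A_n \to \wt A$ in Hausdorff distance and $f_{A_n}^{-1} \to f_{\wt A}^{-1}$ locally uniformly, together with the monotonicity $A_0 \subset A_1 \subset \cdots \subset \wt A$. For the first statement, I would argue as follows. Since $A_n \subset A_{n+1}$, we have $\D \setminus A_{n+1} \subset \D \setminus A_n$, so $f_{A_n} \circ f_{A_{n+1}}^{-1}$ is a conformal self-map of $\D$ fixing $-1, 1, w_0$ whose image is $f_{A_n}(\D \setminus A_{n+1}) \subset \D$; by the Schwarz lemma type argument at a boundary fixed point (or by the standard fact that a restriction map has derivative $\le 1$ at a fixed boundary point of a hull), we get $(f_{A_n} \circ f_{A_{n+1}}^{-1})'(\pm 1) \le 1$, hence $f_{A_{n+1}}'(\pm 1) \le f_{A_n}'(\pm 1)$ by the chain rule. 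Thus $f_{A_n}'(\pm 1)$ is nonincreasing and bounded below by $0$, so it converges; to identify the limit with $f_{\wt A}'(\pm 1)$, I would use that $f_{A_n}^{-1} \to f_{\wt A}^{-1}$ locally uniformly and a boundary distortion estimate — comparing $f_{A_n}'(\pm 1)$ with the conformal radius / harmonic measure of $A_n$ seen from near $\pm 1$, which are continuous under Hausdorff convergence of the hulls away from those points. (Alternatively, one can extract the derivative from a Loewner-type or Koebe-type two-sided bound on $|f_{A_n}^{-1}(z) - (\pm 1)|/|z - (\pm 1)|$ as $z \to \pm 1$ and pass to the limit.)

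For the second assertion — that $\dist\big(f_{A_{n-1}}(A_n \setminus A_{n-1}), \{-1,1\}\big)$ has a uniform positive lower bound in $n$ — the key point is that $A_n \setminus A_{n-1}$ consists of loops of $\cL_\D$ (and their fillings) that are attached to $A_{n-1}$ but were not already in $A_{n-1}$; in particular $A_n \setminus A_{n-1} \subset \wt A$. Since $A \in \mathcal{Q}_-$ has $A \cap \S^1 \subset \{\Im z < 0\}$, the set $\wt A$ stays at positive distance from $\{-1,1\}$: indeed $\wt A$ is the filling of $A$ union the clusters it touches, and these clusters are at positive distance from $\pm 1$ unless some cluster reaches $\pm 1$, an event of probability zero since a.s. no loop of $\cL_\D$ passes through a fixed point. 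So a.s. $\dist(\wt A, \{-1,1\}) =: r > 0$. Now $f_{A_{n-1}}$ maps $\D \setminus A_{n-1}$ to $\D$ fixing $\pm 1$; I claim $f_{A_{n-1}}$ does not distort distances to $\pm 1$ by more than a uniformly bounded factor on the relevant region, because $f_{A_{n-1}}^{-1} \to f_{\wt A}^{-1}$ locally uniformly near $\pm 1$ (these boundary points lie in a neighbourhood where all the $\D \setminus A_{n-1}$ agree with $\D$ near $\pm1$ up to the hull $\wt A$), giving an equicontinuity and a uniform lower bound on $|(f_{A_{n-1}})'|$ near $\pm 1$; combined with $\dist(A_n \setminus A_{n-1}, \{-1,1\}) \ge r$ this yields the desired uniform lower bound after passing through the map. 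Concretely, by Koebe distortion applied to $f_{A_{n-1}}^{-1}$ on a fixed disk around each of $\pm 1$ (valid once $n$ is large, and for the finitely many small $n$ trivially since each $A_n$ is itself at positive distance from $\pm1$), the image $f_{A_{n-1}}(A_n \setminus A_{n-1})$ stays at distance $\ge c_0 r$ from $\pm 1$ for a constant $c_0 > 0$.

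For the third assertion, that $(f_{A_{n-1}} \circ f_{A_n}^{-1})(K) \to K$ in Hausdorff distance for any fixed $K \in \mathcal{K}$: write $g_n := f_{A_{n-1}} \circ f_{A_n}^{-1}$, a conformal map from $\D$ onto $f_{A_{n-1}}(\D \setminus A_n) = \D \setminus f_{A_{n-1}}(A_n \setminus A_{n-1})$, fixing $-1, 1, w_0$. By the second assertion and Lemma~\ref{lem:distance}, the hull $f_{A_{n-1}}(A_n \setminus A_{n-1})$ is squeezed into a shrinking neighbourhood of $\S^1$ that stays away from $\pm 1$; more precisely $d_n \to 0$ in probability, and along a.s.-convergent subsequences $d_n \to 0$ a.s. Then $g_n^{-1} = f_{A_n} \circ f_{A_{n-1}}^{-1}$, as a normalized conformal map of $\D$ with a hull of vanishing ``thickness'' removed and three fixed boundary points, converges locally uniformly to the identity on $\D$ (this is the standard Carathéodory-kernel argument: the domains $\D \setminus f_{A_{n-1}}(A_n\setminus A_{n-1})$ converge to $\D$ in the kernel sense with respect to any interior point, so the normalized uniformizing maps converge to the identity; the three fixed boundary points pin down the rotation/Möbius ambiguity). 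Hence $g_n \to \mathrm{id}$ locally uniformly on $\D$, and since $K$ is a compact subset of $\overline\D$ with $K \cap \S^1 = \{-1,1\}$, one upgrades interior local uniform convergence to Hausdorff convergence of $g_n(K)$ to $K$ using the second assertion to control the two endpoints $\pm 1$ (where $g_n$ fixes the points and has derivative tending to $1$ by the first assertion applied to both $f_{A_{n-1}}'$ and $f_{A_n}'$) and using that $K$ has no other boundary contact.

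\textbf{Main obstacle.} The delicate point is the boundary behaviour at $\pm 1$ — controlling derivatives and distances at these specific boundary points, where Carathéodory convergence of domains does not automatically give convergence of boundary derivatives. The resolution is that $\pm 1$ lie in a region where, because $A \in \mathcal{Q}_-$, all the relevant hulls $A_n$ (and $\wt A$) are uniformly bounded away from $\pm 1$; so near $\pm 1$ the maps $f_{A_n}$ are conformal on a fixed neighbourhood and one may legitimately apply interior Koebe/Carathéodory estimates together with the monotonicity $f_{A_n}'(\pm1)\downarrow$ from the first part, rather than any genuinely boundary-sensitive machinery. Making this "localization near $\pm 1$" precise — i.e. choosing a fixed radius $r>0$ so that $f_{A_n}^{-1}$ is univalent and converges uniformly on $\{|z-1|<r\}\cap\D$ and likewise near $-1$, uniformly in $n$ — is the technical heart of the argument and should be stated as a preliminary claim before deriving the three assertions.
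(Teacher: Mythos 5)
Your proposal is structured sensibly and correctly identifies the central difficulty — controlling the conformal maps at the boundary points $\pm 1$, where Carathéodory-type convergence of domains does not by itself give convergence of boundary derivatives. The monotonicity $f_{A_{n+1}}'(\pm 1)\le f_{A_n}'(\pm 1)$ via the boundary Schwarz lemma is correct and a nice observation (the paper uses this monotonicity downstream in the proof of Theorem~\ref{thm:chordal-intersect} but does not spell out its derivation). However, there is a genuine gap exactly where you flag the ``technical heart'' of the argument: the localization near $\pm 1$ is never actually carried out, and the tools you propose do not apply as stated. You suggest ``Koebe distortion applied to $f_{A_{n-1}}^{-1}$ on a fixed disk around each of $\pm 1$'' and, for the third claim, that ``$g_n$ fixes the points and has derivative tending to $1$.'' But Koebe distortion is an interior estimate: $\pm 1$ are boundary points of $\D$, the domain of $f_{A_{n-1}}^{-1}$, so a disk around $\pm 1$ is not contained in the domain of univalence and the estimate cannot be applied there directly. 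Likewise, pointwise convergence of $g_n'(\pm 1)$ does not yield the uniform control of $g_n$ on $K\cap B_\eps(\pm 1)$ that is needed to get Hausdorff convergence of $g_n(K)$, since $K$ accumulates at $\pm 1$.

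The ingredient that fills this gap in the paper is \emph{Schwarz reflection across the circular arc} $\S^1\cap B_\eps(\pm 1)$: because $A\in\cQ_-$ forces $\wt A$ (hence all $A_n$) to stay away from $\pm 1$, each $f_{A_n}$ (for $n$ large) and $f_{\wt A}$ extends analytically to a map $\varphi_n$, resp. $\varphi$, on a full disk $B_\eps(\pm 1)$, with $\pm 1$ now \emph{interior} points. Once this is done, normal families plus the identity theorem give $\varphi_n\to\varphi$ locally uniformly on $B_\eps(\pm 1)$, so $f_{A_n}'(\pm 1)=\varphi_n'(\pm 1)\to\varphi'(\pm 1)=f_{\wt A}'(\pm 1)$ (claim one); the interior lower bound $|\varphi_n'(1)|\ge C>0$ combined with the genuine Koebe $1/4$ theorem on $B_\eps(1)$ then yields the uniform distance bound $\ge\tfrac14 C\eps$ for $f_{A_{n-1}}(A_n\setminus A_{n-1})$ from $\pm 1$ (claim two); and locally uniform convergence of $\varphi_n$ near $\pm 1$, together with the interior convergence of $f_{A_n}$, gives uniform convergence of $g_n=f_{A_{n-1}}\circ f_{A_n}^{-1}$ on $K$, hence Hausdorff convergence of $g_n(K)$ to $K$ (claim three). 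Your argument should state and use this reflection extension explicitly; without it, the appeals to Koebe and to derivative convergence at $\pm 1$ are not justified.
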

\begin{proof}
Let $B_r(1):=\{z:|z-1|<r\}$ for any $r>0$. Let $\varepsilon<\frac{1}{100}$ be a random positive
number such that $B_{\varepsilon}(1)\cap \wt A=\emptyset$ and $B_{2\varepsilon}(1)\cap f_{\wt A,w_0}^{-1}(\frac{1}{2}\D)=\emptyset$. 
Then, since $A_n\subset\wt A$ and a.s. $f_{A_n,w_0}^{-1}\to f_{\wt A,w_0}^{-1}$ locally uniformly by Lemma~\ref{lem:dH}, there is a random $N$ such that $B_{\varepsilon}(1)\cap A_n=\emptyset$ and $B_{2\varepsilon}(1)\cap f_{A_n,w_0}^{-1}(\frac{1}{2}\D)=\emptyset$ for all $n\ge N$.
By Schwarz reflection for circular arcs, $f_{A_n,w_0}$(for $n\ge N$) and $f_{\wt A,w_0}$ can be analytically extended to functions $\varphi_n$ and $\varphi$ defined on $B_\varepsilon(1)\cup(\D\setminus\wt A)$, respectively. Note that for $n\ge N$, $|\varphi_n(z)|\le 2$ for any $z\in B_\varepsilon(1)\cup(\D\setminus\wt A)$. Therefore, $\{\varphi_n\}$ forms a normal family. Moreover, every limit in this family, when restricted to $\D\setminus\wt A$, is identical to $\varphi\big|_{\D\setminus\wt A}=f_{\wt A,w_0}$ by Lemma~\ref{lem:dH}. By the uniqueness of analytic function, we then conclude $\varphi_n\to\varphi$ locally uniformly, and as an interior point in $B_\varepsilon(1)\cup(\D\setminus\wt A)$, $f_{A_n,w_0}'(1)=\varphi_n'(1)$ converges to $f_{\wt A,w_0}'(1)=\varphi'(1)$. 
In particular, $|\varphi_n'(1)|\ge C$ for some random $C>0$ and hence $\varphi_n(B_\varepsilon(1))\supset B_{C\varepsilon/4}(1)$ by Koebe's 1/4 theorem. Hence $f_{A_{n-1},w_0}(A_n\setminus A_{n-1})$, as a subset of $f_{A_{n-1},w_0}(\wt A\setminus A_{n-1})$, has distance $\ge\frac{1}{4}C\varepsilon$ from $1$. Similar results hold for the case $-1$. The last claim follows directly from the local uniform convergence of $f_{A_n,w_0}$ and $\varphi_n$.
\end{proof}

Finally, we are ready to show Theorem~\ref{thm:chordal-intersect}.
\begin{proof}[Proof of Theorem~\ref{thm:chordal-intersect}]
    We first finish the proof for the one-sided case, which requires the use of Proposition~\ref{prop:recursion} by taking $n\to\infty$.
    By Lemmas~\ref{lem:distance} and~\ref{lem:boundary}, we conclude that under $\bP\otimes\P$,
    \[ 
    \Lambda_{\D}\big((f_{A_{n-1},w_0}\circ f_{A_n,w_0}^{-1})(K),f_{A_{n-1},w_0}(A_n\setminus A_{n-1})\big)\to 0 \text{ in probability.} 
    \]
    Note that $\bP$-a.s. $f_{A_n,w_0}'(\pm 1)\in (0,1)$ and $f_{A_n,w_0}'(\pm 1)\downarrow f_{\wt A,w_0}'(\pm 1)$ by Lemma~\ref{lem:boundary}. Therefore,
    \begin{gather}\begin{split}\label{eq:convg}
&\bE\otimes\E\left[(f_{A_n,w_0}'(1)f_{A_n,w_0}'(-1))^\alpha \exp\Big(-\frac c2 \Lambda_{\D}\big((f_{A_{n-1},w_0}\circ f_{A_n,w_0}^{-1})(K),f_{A_{n-1},w_0}(A_n\setminus A_{n-1})\big)\Big)\right]\\[2mm]
        &\quad \quad \to\bE\left[\left(f_{\wt A,w_0}'(1)f_{\wt A,w_0}’(-1)\right)^\alpha\right] \quad \quad  \text{ as } n\to\infty, \end{split}
    \end{gather}
    where one can use the monotone convergence theorem to show that $\limsup$ of LHS is smaller than RHS, and use Fatou's lemma to show that $\liminf$ of LHS is greater than RHS.  
    Hence, by Proposition~\ref{prop:recursion}, we conclude $\P[K\cap A=\emptyset]=\bE[(f_{\wt A,w_0}'(1)f_{\wt A,w_0}’(-1))^\alpha]$ and finish the one-sided case. 
    
    The two-sided case is proved in a similar way. However, compared to the one-sided case, now it could happen that $-1$ and $1$ are separated by $\wt A$ inside $\D$ (e.g.\ $A$ contains two disjoint islands attached to the lower and upper parts of $\S^1$ resp. and they are connected by a loop-soup cluster to separate $-1$ and $1$). 
    Below, we only give the necessary modifications and omit further details.
    Let $\P$ be the two-sided chordal $c$-restriction measure with exponent $\alpha$ (Definition~\ref{def:chordal}).
    For $A\in\mathcal{Q}$,
    the exploration process $(A_n)_{n\ge 0}$ in Definition~\ref{def:ep} is still valid, and $\bP$-a.s. $\lim_{n\to\infty}d_H(A_n,\wt A)=0$ as well as $\lim_{n\to\infty}d_H(\D\setminus A_n,\D\setminus \wt A)=0$. Furthermore, if we let $E_n$ be the event that $-1$ and $1$ are in the same connected component of $\overline{\D}\setminus A_n$, then $\bP$-a.s. ${\bf 1}_{E_n}\downarrow{\bf 1}_{E}$, where $E$ is defined in \eqref{eq:E}.
    Similarly, if we let $f_{A_n,w_0}$ be the conformal map from $\D\setminus A_n$ to $\D$ fixing $\pm1$ and $w_0$, then $\bP$-a.s. on $E$, $f_{A_n,w_0}^{-1}\to f_{\wt A,w_0}^{-1}$ locally uniformly (using Proposition~\ref{prop:bdy-caratheodory} again). 
    The same argument as in the proof of Proposition~\ref{prop:recursion} now gives
    \begin{align*}
    &\P[K\cap A=\emptyset]\\
    =\ &\bE\otimes\E\left[{\bf 1}_{E_n}(f_{A_n,w_0}'(1)f_{A_n,w_0}'(-1))^\alpha \exp\Big(-\frac c2 \Lambda_{\D}\big((f_{A_{n-1},w_0}\circ f_{A_n,w_0}^{-1})(K),f_{A_{n-1},w_0}(A_n\setminus A_{n-1})\big)\Big)\right].
\end{align*}
For $A\in\mathcal{Q}$ and on the event $E$, analogues of Lemmas~\ref{lem:distance} and~\ref{lem:boundary} hold as well. Then similar to \eqref{eq:convg}, the above converges to $\bE[{\bf 1}_{E}(f_{\wt A,w_0}'(1)f_{\wt A,w_0}’(-1))^\alpha]$, which implies \eqref{eq:general}, as desired.
\end{proof}

\begin{proof}[Proof of Theorem~\ref{thm:chordal-unique}]
Let $c\in(0,1]$. The existence of the two-sided chordal $c$-restriction with exponent $\alpha$ for $\alpha\ge\frac{6-\kappa}{2\kappa}$ is given by~\cite[Proposition 6.2]{qian2021generalized}. 
Moreover, according to~\cite{dubedat2009duality}, the chordal $\SLE_\kappa(\rho)$ with $\rho\in(-2,\infty)$ is a one-sided chordal $c$-restriction sample with exponent $\alpha$ such that $\alpha=\frac{(\rho+2)(\rho+6-\kappa)}{4\kappa}$, giving the existence for all $\alpha>0$.

The uniqueness part of Theorem~\ref{thm:chordal-unique} follows immediately from Theorem~\ref{thm:chordal-intersect}. It remains to show that the two-sided (resp.\ one-sided) chordal $c$-restriction measure with exponent $\alpha$ exists if and only if $\alpha\ge\frac{6-\kappa}{2\kappa}$ (resp.\ $\alpha>0$).
For the one-sided case, when $\alpha\le0$, the right side of~\eqref{eq:general} is always $\ge1$, which cannot be the case.
The argument for the two-sided case is similar to \cite[Corollary~8.6]{lawler2003conformal}.
 Suppose that there is a probability measure $\P$ on random sets $K\in\mathcal{K}$ satisfying the two-sided chordal $c$-restriction with exponent $\alpha$ for some $\alpha<\frac{6-\kappa}{2\kappa}$.
 Then the lower boundary $\gamma$ of $K$ satisfies the one-sided chordal $c$-restriction with the same exponent $\alpha$; hence $\alpha$ needs to be positive. According to the uniqueness of the one-sided case, $\gamma$ is then an $\SLE_\kappa(\rho)$ with $\alpha=\frac{(\rho+2)(\rho+6-\kappa)}{4\kappa}$. Since $\alpha<\frac{6-\kappa}{2\kappa}$, here $\rho<0$. In particular, $\P[0\text{ is above }\gamma]<\frac{1}{2}$. However, the symmetry of $\P$ implies $\P[0\text{ is above }\gamma]$ is at least $\frac{1}{2}$ (it can be strictly larger when $K$ is of positive Lebesgue measure), a contradiction.
\end{proof}

\subsection{The radial case}\label{sec:radial}
In this section, we provide the radial counterpart.
To lighten notation, we use the same symbols as the chordal case to denote their radial counterparts. Let $\cK$ be the collection of all simply connected compact sets $K\subset\overline{\D}$ such that $0\in K$ and $K\cap\S^1=\{1\}$. Let $\cQ$ be the collection of all compact sets $A\subset\overline{\D}$ such that $\D\setminus A$ is simply connected and $0,1\notin A$. For any $A\in \cQ$, let $f_A:\D\setminus A\to \D$ be the conformal map that fixes $0,1$. The following definition for general $c\le1$ is from \cite[Definition~1.3]{qian2021generalized}, which extends the $c=0$ case considered in~\cite{wu2015conformal}. 

\begin{definition}\label{def:radial}
    Let $c\le1$ and $\alpha,\beta\in\R$.
    We say that a probability measure $\P$ on $\mathcal{K}$ satisfies the radial $c$-restriction with exponents $(\alpha,\beta)$, if for all $A\in\mathcal{Q}$,
    \begin{equation}\label{eq:radial-res}
        \frac{d\P(K)}{d\P_A(K)}{\bf 1}_{K\cap A=\emptyset}={\bf 1}_{K\cap A=\emptyset}|f_A'(0)|^\alpha f_A'(1)^\beta \exp\Big(-\frac{c}{2}\Lambda_{\D}(K,A)\Big),
    \end{equation}
    where $\P_A=\P\circ f_A$ and $\Lambda_{\D}(K,A)$ is the same as in Definition~\ref{def:chordal}.
\end{definition}

In \cite[Theorem~1.6]{qian2021generalized}, the author used the radial hypergeometric $\SLE_\kappa$ to construct a probability measure $\P_\kappa^{\alpha,\beta}$, which satisfies the radial $c$-restriction with exponents $(\alpha,\beta)$, in the range 
\begin{equation}\label{eq:regime}
    \alpha\le\eta_\kappa(\beta),\  \beta\ge \frac{6-\kappa}{2\kappa}, \text{ where } \eta_\kappa(\beta):=\frac{(\sqrt{16\kappa\beta+(4-\kappa)^2}-(4-\kappa))^2-4(4-\kappa)^2}{32\kappa}
\end{equation}
is the generalized disconnection exponent~\cite[(1.7)]{qian2021generalized}. The author conjectured that \eqref{eq:regime} is the maximal range such that the measure in Definition~\ref{def:radial} exists, and further conjectured that for any $c\le 1$ ($\kappa\in (0,4]$), the restriction property \eqref{eq:radial-res} uniquely characterizes the law of $\P$. Up to now, the only proved case is $c=0$~\cite{wu2015conformal}.
Using the approach similar to the chordal case, we are able to confirm the uniqueness in the range \eqref{eq:regime}.


\begin{theorem}\label{thm:radial}
    Let $c\in(0,1]$ and $(\alpha,\beta)$ in the range \eqref{eq:regime}.
    Suppose $\P$ satisfies the radial $c$-restriction with exponents $(\alpha,\beta)$. Then, for all $A\in\mathcal{Q}$, we have
    \begin{equation}\label{eq:general-radial}
        \P[K\cap A=\emptyset]=\bE\left[{\bf 1}_{0\notin\wt A}\,|f_{\wt A}'(0)|^\alpha \,f_{\wt A}'(1)^\beta \right],
    \end{equation}
    where $\bE$ denotes the expectation for the Brownian loop soup $\mathcal{L}_\D$ of intensity $\frac{c}{2}$, $\wt A$ is the filling of the union of $A$ and all clusters in $\mathcal{L}_\D$ that intersect $A$, and $f_{\wt A}$ is the conformal map from $\D\setminus\wt A$ to $\D$ that fixes $0,1$. As a result, for the above $c,\alpha,\beta$, the probability measure on $\mathcal{K}$ that satisfies the radial $c$-restriction with exponents $(\alpha,\beta)$ as in Definition~\ref{def:radial} is unique.
\end{theorem}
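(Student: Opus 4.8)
The plan is to transcribe the proof of Theorem~\ref{thm:chordal-intersect} to the radial setting: once the identity \eqref{eq:general-radial} is established, uniqueness is immediate, since its right-hand side depends only on the Brownian loop soup $\mathcal{L}_\D$ of intensity $\tfrac c2$ and not on $\P$, so $\P[K\cap A=\emptyset]$ is pinned down for every $A\in\mathcal{Q}$, and the events $\{K\cap A=\emptyset\}$ generate the $\sigma$-algebra on $\mathcal K$.

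First I would set up the radial exploration process exactly as in Definition~\ref{def:ep}: for $A\in\mathcal Q$ put $A_0:=A$ and $A_{n+1}:=\Fill(A_n\cup\{\text{loops of }\mathcal L_\D\text{ meeting }A_n\})$, and let $f_{A_n}\colon\D\setminus A_n\to\D$ be the conformal map fixing $0,1$, which is well defined on the event $E_n:=\{0\notin A_n\}$. The proof of Lemma~\ref{lem:dH} applies verbatim to give $d_H(A_n,\wt A)\to0$ a.s.\ and $f_{A_n}^{-1}\to f_{\wt A}^{-1}$ locally uniformly on the relevant event; the same finite-subcluster approximation also shows $\mathbf 1_{E_n}\downarrow\mathbf 1_{\{0\notin\wt A\}}$ a.s., because if $0\in\wt A$ then $0$ is already surrounded by finitely many loops of the clusters touching $A$, hence lies in $A_n$ for large $n$. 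By the monotonicity of the conformal radius at $0$ one gets $|f_{A_n}'(0)|\uparrow|f_{\wt A}'(0)|\in[1,\infty)$ on $\{0\notin\wt A\}$, and by Julia's lemma applied to $f_{A_n}^{-1}\colon\D\to\D$ (which fixes $0$ and $1$) together with the nesting $A_n\subset A_{n+1}$ one gets $f_{A_n}'(1)\downarrow f_{\wt A}'(1)\in(0,1]$.

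Next I would prove the radial recursion. Interpolating an independent loop soup as in \eqref{eq:recursion-2}, using $\exp(-\tfrac c2\Lambda_\D(K,A))=\bP[K\cap A_1=\emptyset\mid K]$ on $\{K\cap A=\emptyset\}$ and then applying \eqref{eq:radial-res} to $\P_A$ and the set $f_A(A_1\setminus A)$ with the map $\phi=f_{A_1}\circ f_A^{-1}$ fixing $0,1$, the chain rule makes the factors $|f_{A_n}'(0)|^\alpha f_{A_n}'(1)^\beta$ telescope, and together with the conformal invariance of the loop measure one obtains, for every $n\ge1$,
\[
\P[K\cap A=\emptyset]=\bE\otimes\E\Big[\mathbf 1_{E_n}\,|f_{A_n}'(0)|^\alpha f_{A_n}'(1)^\beta\exp\big(-\tfrac c2\Lambda_\D\big((f_{A_{n-1}}\circ f_{A_n}^{-1})(K),\,f_{A_{n-1}}(A_n\setminus A_{n-1})\big)\big)\Big].
\]
The indicator $\mathbf 1_{E_n}$ is legitimate because every $K\in\mathcal K$ contains $0$, so on $\{0\in A_n\}$ the event $\{K\cap A_n=\emptyset\}$ produced by the interpolation is empty (this is the radial analogue of the ``$\pm1$ separated'' phenomenon in the two-sided chordal case). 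Then I would establish the radial analogues of Lemmas~\ref{lem:distance} and~\ref{lem:boundary}: by Lemma~\ref{lem:reaching} and conformal invariance of $\mathcal L_\D$, $\sup\{\dist(z,\S^1):z\in f_{A_{n-1}}(A_n\setminus A_{n-1})\}\to0$ in probability (so this image also stays away from $0$, for free), while Schwarz reflection across the free boundary arc near $1$ gives a uniform positive lower bound on the distance from $f_{A_{n-1}}(A_n\setminus A_{n-1})$ to $1$ and yields $(f_{A_{n-1}}\circ f_{A_n}^{-1})(K)\to K$ in Hausdorff distance. Consequently $\Lambda_\D\big((f_{A_{n-1}}\circ f_{A_n}^{-1})(K),f_{A_{n-1}}(A_n\setminus A_{n-1})\big)\to0$ in probability, and the integrand above converges in probability to $\mathbf 1_{\{0\notin\wt A\}}|f_{\wt A}'(0)|^\alpha f_{\wt A}'(1)^\beta$.

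The remaining and main point is passing the limit through $\bE\otimes\E$. As in the proof of Theorem~\ref{thm:chordal-intersect} I would combine Fatou's lemma (for the $\liminf\ge$ bound, using only non-negativity and a.s.\ convergence along a subsequence) with a monotone/dominated convergence argument (for the $\limsup\le$ bound), splitting into cases according to the signs of $\alpha$ and $\beta$ and using $|f_{A_n}'(0)|\ge1$ and $f_{A_n}'(1)\in(0,1]$: when $\alpha\le0$ and $\beta\ge0$ the integrand is bounded by $\mathbf 1_{E_n}\le1$ and dominated convergence applies directly; when $\alpha>0$ or $\beta<0$ the offending factor $|f_{A_n}'(0)|^\alpha$ (resp.\ $f_{A_n}'(1)^\beta$) is increasing, so, after discarding the sub-unit exponential, one uses that $\bE\otimes\E[\mathbf 1_{E_n}|f_{A_n}'(0)|^\alpha f_{A_n}'(1)^\beta\cdots]=\P[K\cap A=\emptyset]\le1$ uniformly in $n$ to control the relevant expectations and let monotone convergence identify the limit as $\bE[\mathbf 1_{\{0\notin\wt A\}}|f_{\wt A}'(0)|^\alpha f_{\wt A}'(1)^\beta]$. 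I expect this interchange in the unbounded regime to be the hardest step, precisely because the decreasing indicator $\mathbf 1_{E_n}$, the increasing conformal radius at $0$, and the decreasing derivative at $1$ must be reconciled simultaneously; once it is done, \eqref{eq:general-radial} follows and the uniqueness claim is immediate.
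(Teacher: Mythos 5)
Your proposal is correct and follows essentially the same route as the paper's proof: the paper likewise runs the exploration process of Definition~\ref{def:ep} for $A\in\mathcal Q$, notes $d_H(A_n,\wt A)\to0$ and ${\bf 1}_{0\notin A_n}\downarrow{\bf 1}_{0\notin\wt A}$ a.s., derives the radial recursion \eqref{eq:recursion-radial} exactly as in Proposition~\ref{prop:recursion}, and passes to the limit using $|f_{A_n}'(0)|\uparrow|f_{\wt A}'(0)|$, $f_{A_n}'(1)\downarrow f_{\wt A}'(1)$, the vanishing of the loop-mass term (radial analogues of Lemmas~\ref{lem:distance} and~\ref{lem:boundary}), and the same Fatou/monotone-convergence argument as in \eqref{eq:convg}. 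Your extra justifications (Julia's lemma at $1$, conformal radius at $0$, the sign case analysis for $\alpha,\beta$) are consistent elaborations of the steps the paper treats by reference to the chordal case.
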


\begin{proof}
    It suffices to show \eqref{eq:general-radial}.
    For any $A\in\mathcal{Q}$, the exploration process $(A_n)_{n\ge 0}$ given in Definition~\ref{def:ep} is still valid, and we have $\bP$-a.s. $\lim_{n\to\infty}d_H(A_n,\wt A)=0$.  
    Moreover, $\bP$-a.s. ${\bf 1}_{0\notin A_n}\downarrow{\bf 1}_{0\notin \wt A}$. Similar to Proposition~\ref{prop:recursion}, we can use \eqref{eq:radial-res} to deduce the following formula 
    \begin{gather}\begin{split}\label{eq:recursion-radial}
    &\P[K\cap A=\emptyset]\\
    =\ &\bE\otimes\E\left[{\bf 1}_{0\notin A_n}\,|f_{A_n}'(0)|^\alpha \,f_{A_n}'(1)^\beta \exp\Big(-\frac c2 \Lambda_{\D}\big((f_{A_{n-1}}\circ f_{A_n}^{-1})(K),f_{A_{n-1}}(A_n\setminus A_{n-1})\big)\Big)\right].
    \end{split}
\end{gather}
On the event $\{0\notin \wt A\}$, we have $f_{A_n}^{-1}\to f_{\wt A}^{-1}$, $1\le |f_{A_n}'(0)|\uparrow |f_{\wt A}'(0)|$, $1\ge f_{A_n}'(1)\downarrow f_{\wt A}'(1)>0$, and the exponential term tends to $1$ as before. 
By the bounded convergence theorem, we get \eqref{eq:general-radial} by letting $n\to\infty$ in \eqref{eq:recursion-radial}. 
More concretely, $|f_{A_n}'(0)|\le 4r_n^{-1}$ with $r_n:=\dist(0,A_n)$ by Koebe’s 1/4 theorem. Note that $f_{A_n}'(1)$ is equal to the probability that a Brownian excursion from $0$ to $1$ in $\D$ avoids $A_n$, and hence, $f_{A_n}'(1)\le Cr_n^{1/2}$ for some constant $C>1$ by the Beurling estimate. Combined, for any $\beta\ge0$ and $\alpha\le\beta/2$, the quantity inside $\bE\otimes\E$ in \eqref{eq:recursion-radial} is bounded by $4^\alpha C^\beta$, which allows us to conclude \eqref{eq:general-radial} in this range. Noting that $\beta\ge0$ and $\alpha\le\beta/2$ contains the range \eqref{eq:regime}, i.e. $\eta_\kappa(\beta)\le \frac\beta2$ and $\frac{6-\kappa}{2\kappa}>0$ for any $\kappa\in(0,4]$ and $\beta\ge0$, we complete the proof. 
\end{proof}

Consequently, we obtain the following result, which plays a crucial role in determining the Hausdorff dimension of certain exceptional sets arising from loop-soup clusters \cite{gao2022multiple}.

\begin{proposition}\label{prop:radial-eqv}
    Let $\kappa\in(\frac{8}{3},4]$, $\beta\ge\frac{5}{8}$ and $\alpha\le\eta_{8/3}(\beta)$, where $\eta_{8/3}(\beta)$ given by \eqref{eq:regime} is the Brownian disconnection exponent. Let $K$ be a standard radial restriction sample in $\D$ with exponents $(\alpha,\beta)$ and marked points $0$ and $1$ (i.e. the $c=0$ case in Definition~\ref{def:radial}). Let $\Gamma$ be an independent $\CLE_\kappa$ on $\D$. Let $\mathcal{C}(K)$ be the union of $K$ and all loops of $\Gamma$ that $K$ intersects. Then, the filling of $\mathcal{C}(K)$ has the same law as $\P_\kappa^{\alpha,\beta}$, which is constructed by the radial hypergeometric $\SLE_\kappa$ in \cite[Section 4.1]{qian2021generalized}.
\end{proposition}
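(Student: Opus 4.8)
The plan is to deduce Proposition~\ref{prop:radial-eqv} from the uniqueness statement of Theorem~\ref{thm:radial} by checking that both $\Fill(\mathcal{C}(K))$ and $\P_\kappa^{\alpha,\beta}$ satisfy the radial $c$-restriction with the same exponents $(\alpha,\beta)$, where $c=c(\kappa)\in(0,1]$. The right-hand side of this identification, namely that $\P_\kappa^{\alpha,\beta}$ satisfies \eqref{eq:radial-res}, is exactly \cite[Theorem~1.6]{qian2021generalized} (and the admissible range $\beta\ge\frac{6-\kappa}{2\kappa}$, $\alpha\le\eta_\kappa(\beta)$ is what guarantees both existence and that we are in a regime where the measure is a probability measure), so the work is entirely on the left-hand side. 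Note $\beta\ge\frac58$ is $\beta\ge\frac{6-\kappa}{2\kappa}$ at $\kappa=8/3$, and $\eta(\beta)=\eta_{8/3}(\beta)$ matches $\eta_\kappa(\beta)$ in the sense that the Brownian disconnection exponent is the $\kappa=8/3$ specialization; so the parameter ranges are consistent with those in \cite[Theorem~1.6]{qian2021generalized}.

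First I would verify that $\Fill(\mathcal{C}(K))$ almost surely lies in $\mathcal{K}$, i.e.\ that it is a simply connected compact subset of $\overline\D$ containing $0$ with intersection $\{1\}$ with $\S^1$; this uses that $K$ touches $\S^1$ only at $1$, that $\CLE_\kappa$ loops are a.s.\ at positive distance from $\S^1$ and $0\in K$, plus standard facts about loop-soup clusters (finitely many of macroscopic size, by \cite[Lemma~9.7]{sheffield2012conformal}, so the filled union is closed and bounded; filling makes it simply connected). The core step is the restriction identity: for $A\in\mathcal{Q}$, I want
\[
\frac{d\P'(\cdot)}{d\P'_A(\cdot)}\,{\bf 1}_{\cdot\cap A=\emptyset}={\bf 1}_{\cdot\cap A=\emptyset}\,|f_A'(0)|^\alpha f_A'(1)^\beta\exp\!\Big(-\frac c2\Lambda_\D(\cdot,A)\Big),
\]
where $\P'$ is the law of $\Fill(\mathcal{C}(K))$. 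The clean way is to use the fact — recalled in the proof of Proposition~\ref{prop:werner-wu} — that $\CLE_\kappa$ on $\D$ is the collection of outer boundaries of outermost clusters of a Brownian loop soup $\mathcal{L}_\D$ of intensity $\frac c2$ \cite{sheffield2012conformal}. Then $\Fill(\mathcal{C}(K))$ is precisely $\wt K$ in the sense of Definition~\ref{def:ep}/the $\wt A$-construction, built from $K$ and the loop soup $\mathcal{L}_\D$. Conditioning on $\{\Fill(\mathcal{C}(K))\cap A=\emptyset\}$: this event says $K\cap A=\emptyset$ and no loop-soup cluster meeting $K$ meets $A$. One computes the Radon–Nikodym derivative by first restricting the loop soup $\mathcal{L}_\D$ to loops avoiding $A$ (a deterministic loss of mass that is exactly $\exp(-\frac c2\Lambda_\D(K,A))$ in expectation once one integrates, by the loop-soup thinning / Poissonian structure), then applying the $c=0$ radial restriction property of $K$ with the conformal map $f_A$ to produce the factor $|f_A'(0)|^\alpha f_A'(1)^\beta$, and finally using conformal invariance of the Brownian loop soup to recognize that the conditional law of the added clusters is again that of a loop soup in the image domain. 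This is the same loop-soup interpolation mechanism already used to prove Theorem~\ref{thm:chordal-intersect}/Theorem~\ref{thm:radial}, just run in one step rather than iterated; indeed one can also argue that \eqref{eq:general-radial} holds for $\P'$ directly and invoke the uniqueness clause of Theorem~\ref{thm:radial} without re-deriving \eqref{eq:radial-res} in full.

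The main obstacle I anticipate is the bookkeeping around the filling operation and the event $\{0\notin\wt A\}$ versus $0\in\Fill(\mathcal{C}(K))$: one must be careful that filling the union of $K$ with the clusters it meets does not accidentally swallow $A$ or disconnect things in a way that breaks the clean factorization, and that on the relevant event the conformal maps $f_A$, $f_{\wt A}$ and their compositions behave as required (continuity up to the boundary near $0$ and $1$, correct sign conventions for $f_A'(1)$, and the identity $|f_A'(0)|=$ ratio of conformal radii). A secondary point is confirming that the constructed measure is genuinely a \emph{probability} measure in the stated range — this is where $\beta\ge\frac58$ and $\alpha\le\eta(\beta)$ enter, and it is inherited from \cite[Theorem~1.6]{qian2021generalized} together with the fact that $\Fill(\mathcal{C}(K))$ is an honest random compact set. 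Once the restriction identity \eqref{eq:radial-res} is in hand for $\P'$ with exponents $(\alpha,\beta)$, the conclusion is immediate: Theorem~\ref{thm:radial} gives uniqueness, and \cite[Theorem~1.6]{qian2021generalized} identifies the unique such measure with $\P_\kappa^{\alpha,\beta}$.
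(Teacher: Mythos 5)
Your overall strategy is the same as the paper's: show that both $\Fill(\mathcal{C}(K))$ and $\P_\kappa^{\alpha,\beta}$ satisfy the radial $c$-restriction with exponents $(\alpha,\beta)$ and then invoke the uniqueness clause of Theorem~\ref{thm:radial}. The paper's proof is literally a two-line citation: it quotes \cite[Theorem~1.6]{qian2021generalized} for the filling construction and \cite[Lemma~2.1]{qian2021generalized} for $\P_\kappa^{\alpha,\beta}$, then applies Theorem~\ref{thm:radial}. You instead try to re-derive the restriction identity for $\Fill(\mathcal{C}(K))$ from scratch.

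That re-derivation, as you describe it, has a gap. You propose to ``restrict the loop soup to loops avoiding $A$'' and book a ``deterministic loss of mass exactly $\exp(-\frac{c}{2}\Lambda_\D(K,A))$.'' But $\exp(-\frac{c}{2}\Lambda_\D(K,A))$ is the probability that no \emph{single} loop of $\mathcal{L}_\D$ hits both $K$ and $A$, whereas the event $\{\Fill(\mathcal{C}(K))\cap A=\emptyset\}$ requires that no \emph{cluster} of $\mathcal{L}_\D$ meeting $K$ meets $A$ — a strictly stronger condition, since a cluster can bridge $K$ to $A$ through a chain of loops, some of which avoid $A$ and some of which avoid $K$. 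Consequently the one-step Poissonian thinning does not produce the claimed Radon--Nikodym factor, and the conditioning on $\mathcal{L}_\D$ restricted to $\D\setminus A$ does not reproduce a fresh loop soup in the image domain in the straightforward way you suggest; this cluster-bridging phenomenon is exactly why Theorems~\ref{thm:chordal-intersect} and~\ref{thm:radial} require the iterated exploration $(A_n)_{n\ge0}$ rather than a single interpolation step.

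Your closing parenthetical does point at the correct repair: verify \eqref{eq:general-radial} for $\P'$ directly. The argument you should spell out is the following coupling. Take a single Brownian loop soup $\mathcal{L}_\D$ of intensity $\frac{c}{2}$, independent of $K$, and use it both to build $\CLE_\kappa$ (hence $\wt K:=\Fill(\mathcal{C}(K))$) and as the $\mathcal{L}_\D$ appearing on the right-hand side of \eqref{eq:general-radial} (hence $\wt A$). Since $A$ touches $\partial\D$, $\wt K$ cannot enclose $A$ and one has the elementary identity
\begin{equation*}
\{\wt K\cap A=\emptyset\}=\{K\cap A=\emptyset\ \text{and no cluster of $\mathcal{L}_\D$ meets both $K$ and $A$}\}=\{K\cap\wt A=\emptyset\}.
\end{equation*}
Conditioning on $\mathcal{L}_\D$ (which determines $\wt A$) and using the $c=0$ radial restriction formula for $K$ yields $\P_K[K\cap\wt A=\emptyset]={\bf 1}_{0\notin\wt A}\,|f_{\wt A}'(0)|^\alpha f_{\wt A}'(1)^\beta$; integrating over $\mathcal{L}_\D$ gives exactly \eqref{eq:general-radial}. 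By Theorem~\ref{thm:radial}, this pins down $\P'$ uniquely, and \cite[Theorem~1.6]{qian2021generalized} identifies it with $\P_\kappa^{\alpha,\beta}$. This route avoids the flawed one-step thinning, and is what the remark after Theorem~\ref{thm:chordal-intersect} is alluding to; the paper itself, however, does not carry out either verification and simply cites the facts from \cite{qian2021generalized}.
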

\begin{proof}
    Note that both the law of the filling of $\mathcal{C}(K)$ and $\P_\kappa^{\alpha,\beta}$ satisfy the radial $c$-restriction with exponents $(\alpha,\beta)$; see Theorem~1.6 and Lemma~2.1 of \cite{qian2021generalized}, respectively. Now, Theorem~\ref{thm:radial} gives the equivalence between these two.
\end{proof}

\begin{remark}
    Our analysis can be also adapted to the trichordal case; see \cite{qian2018conformal} and \cite[Section~6.2.2]{qian2021generalized} for related notions. We leave it to the interested reader.
\end{remark}

\section{The Malliavin-Kontsevich-Suhov measure}\label{sec:loop}

This section focuses on the MKS loop measure $\mu_\C$ (see Definition~\ref{def:MKS}) on the whole-plane. We will prove Theorem~\ref{thm:intersect} in Section~\ref{subsec:hitting}, which implies Theorem~\ref{thm:loop-unique} immediately.
We remind the reader that all the constants, $\lambda$ and $\zeta_1$ in this section as well as $\zeta_2$ and $\zeta$ in the next section, will depend on $c$ and the choice of $\mu_\C$ (which is not unique).

As mentioned before, since the Brownian loop soup on $\C$ a.s. has only one cluster for any $c>0$, we will use the \emph{$\SLE_{8/3}$ loop soup} on $\C$ instead, as considered in~\cite{kemppainen2016nested}.
Recall that Werner's $\SLE_{8/3}$ loop measure $\mathcal{W}$ on $\C$ is induced from the Brownian loop measure $\mu^{\text{BL}}$ on $\C$ by taking outer boundaries. For an MKS loop measure $\mu_\C$ and $c\in(-\infty,1]$, on any simply connected $D\subset\C$, define $\wh \mu_D$ by
\begin{equation}\label{eq:hmu}
	\frac{d\wh \mu_D(\eta)}{d\mu_\C(\eta)}{\bf 1}_{\eta\subset D}={\bf 1}_{\eta\subset D}\exp\Big(\frac{c}{2}\mathcal{W}(\eta,\partial D)\Big),
\end{equation}
where $\mathcal{W}(\eta,\partial D)$ is the total mass of loops on $\C$ intersecting both $\eta$ and $\partial D$ under the measure $\mathcal{W}$.
We first show that the above $(\wh\mu_D)_{D\subset\C}$ is also conformally invariant as the family $(\mu_D)_{D\subset\C}$ defined in~\eqref{eq:mks}, based on a relation between $\mathcal{W}$ and $\mu^{\rm BL}$ obtained by Carfagnini and Wang~\cite{carfagnini2024onsager}.
\begin{lemma}\label{lem:ci-hmu}
	$(\wh\mu_D)_{D\subset\C}$ defined above is conformally invariant. That is, for $\phi:D\to D'$ conformal, the pushforward of $\wh\mu_D$ under $\phi$ equals $\wh\mu_{D'}$.
\end{lemma}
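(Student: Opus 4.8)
\textbf{Proof proposal for Lemma~\ref{lem:ci-hmu}.}

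The plan is to reduce the conformal invariance of $(\wh\mu_D)$ to that of $(\mu_D)$ by showing that the two families differ by a \emph{multiplicative cocycle} which is itself conformally consistent. Concretely, for a simply connected domain $D\subset\C$ and a loop $\eta\subset D$, I would compare the two Radon–Nikodym weights: $\mu_D$ carries the weight $\exp(\tfrac c2\Lambda^*(\eta,\partial D))$ with respect to $\mu_\C$, while $\wh\mu_D$ carries $\exp(\tfrac c2\mathcal W(\eta,\partial D))$. Hence $\wh\mu_D$ is absolutely continuous with respect to $\mu_D$ with density $\exp\!\big(\tfrac c2(\mathcal W(\eta,\partial D)-\Lambda^*(\eta,\partial D))\big)$. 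The key input is the result of Carfagnini and Wang~\cite{carfagnini2024onsager}: they identify $\mathcal W(\eta,\partial D)-\Lambda^*(\eta,\partial D)$ (the discrepancy between Werner's $\SLE_{8/3}$ loop measure of loops separating $\eta$ from $\partial D$ and the corresponding normalized Brownian loop-measure quantity) as a quantity that depends on $\eta$ and $D$ only through conformally natural data — in fact it should be expressible via a difference of the form $\Psi(\eta)-\Psi_D(\eta)$ for some conformally covariant functional $\Psi$ (e.g. a Loewner-energy / determinant-of-Laplacian type term), or more simply it is invariant under the conformal maps in question. I would quote their statement in the precise form needed and set $G_D(\eta):=\exp\!\big(\tfrac c2(\mathcal W(\eta,\partial D)-\Lambda^*(\eta,\partial D))\big)$.

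Next I would verify the cocycle/consistency property: for $\phi:D\to D'$ conformal and $\eta\subset D$, one has $G_{D'}(\phi(\eta))=G_D(\eta)$ (or, if Carfagnini–Wang's identity involves a genuinely non-invariant piece, that piece must cancel against the conformal anomaly already present in $(\mu_D)$ — I would track it as a multiplicative factor $c_\phi$ independent of $\eta$, which would only rescale $\mu_{D'}$ and hence be absorbed since MKS measures are defined up to a constant; but the cleanest route is to show $G$ is genuinely conformally invariant). Granting this, the computation is a change-of-variables: for a test event,
\begin{align*}
(\phi_*\wh\mu_D)(\cdot)
&=\int \mathbf 1_{\cdot}(\phi(\eta))\,G_D(\eta)\,d\mu_D(\eta)
=\int \mathbf 1_{\cdot}(\phi(\eta))\,G_{D'}(\phi(\eta))\,d\mu_D(\eta)\\
&=\int \mathbf 1_{\cdot}(\eta')\,G_{D'}(\eta')\,d(\phi_*\mu_D)(\eta')
=\int \mathbf 1_{\cdot}(\eta')\,G_{D'}(\eta')\,d\mu_{D'}(\eta')
=\wh\mu_{D'}(\cdot),
\end{align*}
where the penultimate equality is exactly the conformal invariance of $(\mu_D)$ from Definition~\ref{def:MKS}, and the last is the definition \eqref{eq:hmu} rewritten via $\wh\mu_{D'}=G_{D'}\cdot\mu_{D'}$ (which in turn follows from \eqref{eq:mks} and \eqref{eq:hmu}). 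One should also check the innocuous measurability/finiteness points: $\mathbf 1_{\phi(\eta)\subset D'}=\mathbf 1_{\eta\subset D}$ since $\phi$ is a bijection $D\to D'$, so the indicator bookkeeping in \eqref{eq:hmu} is automatic, and non-triviality of $\wh\mu_\C=\mu_\C$ is unchanged (the global measure $\mu_\C$ itself is not reweighted).

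The main obstacle I anticipate is pinning down the exact statement extracted from~\cite{carfagnini2024onsager} and making sure the comparison $\mathcal W$ versus $\Lambda^*$ is set up with the correct normalizations — in particular whether "normalized Brownian loop measure" in the sense of~\cite{field2013reversed} versus the loop measure underlying $\mathcal W$ (via outer boundaries of $\mu^{\rm BL}$, as in~\cite{werner2008conformally}) produces precisely a conformally invariant discrepancy or one that carries a controlled anomaly. If it is the latter, the fallback is to observe that any $\eta$-independent discrepancy is a constant prefactor, and since the whole argument is about the uniqueness of MKS measures \emph{up to a positive constant}, such a prefactor is harmless; I would phrase the lemma and its use accordingly. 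A secondary, routine point is justifying the change of variables under the infinite measure $\mu_D$ (it is $\sigma$-finite on loops of diameter bounded below in bounded domains, by non-triviality, which suffices to run Tonelli/monotone-class arguments on the generating events).
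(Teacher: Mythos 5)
Your proposal is correct and follows essentially the same route as the paper: both write $\wh\mu_D$ as $\mu_D$ (equivalently $\mu_\C$) reweighted by $\exp\big(\tfrac c2(\mathcal W(\eta,\partial D)-\Lambda^*(\eta,\partial D))\big)$, invoke the Carfagnini--Wang identity (their Theorem 2.5) to conclude this discrepancy is conformally invariant, and then apply the MKS conformal invariance of $(\mu_D)$ via a change of variables. Your hedging about a possible anomaly is unnecessary --- the cited result gives exact invariance of the discrepancy, which is precisely what the paper uses.
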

\begin{proof}
	Since $\mu_\C$ is an MKS measure, we have $\phi\circ\mu_D=\mu_{D'}$, and by \eqref{eq:mks},
	\begin{equation*}
		\int_{\{\eta\subset D\}}F(\phi(\eta))\exp\left(\frac{c}{2}\Lambda^*(\eta,\partial D)\right)\mu_\C(d\eta)=\int_{\{\eta\subset D'\}}F(\eta)\exp\left(\frac{c}{2}\Lambda^*(\eta,\partial D')\right)\mu_\C(d\eta)
	\end{equation*}
	for any positive measurable $F$ defined on loops contained in $D'$. Now we choose
	$$F(\eta)=\exp\left(\frac{c}{2}(\mathcal{W}(\phi^{-1}(\eta),\partial D)-\Lambda^*(\phi^{-1}(\eta),\partial D))\right)G(\eta).$$
	By \cite[Theorem 2.5]{carfagnini2024onsager} and the remark after it, we have
	$$F(\eta)=\exp\left(\frac{c}{2}(\mathcal{W}(\eta,\partial D')-\Lambda^*(\eta,\partial D'))\right)G(\eta).$$
	Hence we obtain
	\begin{equation*}
		\int_{\{\eta\subset D\}}G(\phi(\eta))\exp\left(\frac{c}{2}\mathcal{W}(\eta,\partial D)\right)\mu_\C(d\eta)=\int_{\{\eta\subset D'\}}F(\eta)\exp\left(\frac{c}{2}\mathcal{W}(\eta,\partial D')\right)\mu_\C(d\eta),
	\end{equation*}
	which is exactly that $\phi\circ\wh\mu_D=\wh\mu_{D'}$, as desired.
\end{proof}

\begin{remark}
$(\mu_D)$ and $(\wh \mu_D)$ are generally two different family of measures. Indeed, for any simply connected domain $D\subset\C$ and $c\in(0,1]$, $\mu_D$ (resp.\ $\wh\mu_D$) can be realized as the counting measure on outer boundaries of Brownian (resp.\ $\SLE_{8/3}$) loop soup clusters of intensity $\frac{c}{2}$ in $D$. This will be rigorously proved in a forthcoming work~\cite{clqsw}.
\end{remark}

In the following, we focus on the case $c\in(0,1]$.
We use a superscript 0 to denote the corresponding measure restricted to loops surrounding the origin.
As in Section~\ref{sec:general}, we will use the $\SLE_{8/3}$ loop soup observables to express quantities of $\mu_\C^0$. Let $\mathcal{L}^\cW_\C$ be the $\SLE_{8/3}$ loop soup on $\C$ of intensity $\frac{c}{2}$, and $\bP,\bE$ be the law and the expectation with respect to $\mathcal{L}^\cW_\C$. Recall the notation introduced above Theorem~\ref{thm:intersect}. In the following, for any function $f$ defined on some $\Omega\subset\C$, we let $f(A):=f(A\cap\Omega)$ for any $A\subset\C$.

\begin{proposition}\label{prop:priori}
	For any $0<r<1$, we have $\wh\mu_{\D}^0[\ell\not\subset r\D]<\infty$. Furthermore, for any simply connected $D\subset\D$ containing the origin, we have
	\begin{align}\label{eq:recursion-loop}
		\mu_\C^0[\ell\not\subset r\D,\ell\subset D]=\bE\otimes\wh\mu_\D^0[\ell\not\subset f_{D,\infty}( r\D)].
	\end{align}
\end{proposition}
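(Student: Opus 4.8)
The finiteness statement $\wh\mu_\D^0[\ell\not\subset r\D]<\infty$ should come directly from comparing $\wh\mu_\D$ with $\mu_\C$. Indeed, by \eqref{eq:hmu}, $\frac{d\wh\mu_\D(\eta)}{d\mu_\C(\eta)}\mathbf 1_{\eta\subset\D}=\mathbf 1_{\eta\subset\D}\exp(\frac c2\cW(\eta,\partial\D))$, and for $\eta$ surrounding the origin with $\eta\not\subset r\D$, the quantity $\cW(\eta,\partial\D)$ is bounded above by $\cW(r\S^1,\partial\D)<\infty$ (by monotonicity of the loop measure of loops hitting two sets, since such an $\eta$ separates $r\S^1$ from $\partial\D$, any loop hitting both $\eta$ and $\partial\D$ must hit $r\S^1$). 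Hence $\wh\mu_\D^0[\ell\not\subset r\D]\le e^{\frac c2\cW(r\S^1,\partial\D)}\,\mu_\C^0[\ell\not\subset r\D]$, and the latter is finite because $\mu_\C$ is non-trivial (loops in $\D$ of diameter $>\mathrm{dist}(0,r\S^1)$ have finite mass), once we observe that a loop surrounding $0$ with $\ell\not\subset r\D$ has diameter at least $1-r$.

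For the recursion \eqref{eq:recursion-loop}, the idea is exactly the loop-soup interpolation used in Proposition~\ref{prop:recursion}, now applied to the MKS setting. Starting from the left side, $\mu_\C^0[\ell\not\subset r\D,\ell\subset D]$: by the definition \eqref{eq:mks} (or rather its $\cW$-analog, using Lemma~\ref{lem:ci-hmu} and the conformal invariance of $(\wh\mu_D)$), restricting to loops in $D$ and reweighting, $\mu_\C^0[\,\cdot\,,\ell\subset D]=\wh\mu_D^0[\,\cdot\,\exp(-\frac c2\cW(\ell,\partial D))]$. The key observation, parallel to the one in the chordal proof, is that $\exp(-\frac c2\cW(\ell,\partial D))$ is precisely the $\bP$-probability that no cluster of the $\SLE_{8/3}$ loop soup $\cL_\C^\cW$ intersects both $\ell$ and $\partial D$ — equivalently, the probability that the loop-soup filling $\cC_{\partial D}$ of $\partial D$ does not disconnect $\ell$ from... well, the probability that $\ell\subset D_\infty$. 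So interpolating the independent loop soup,
\[
\mu_\C^0[\ell\not\subset r\D,\ell\subset D]=\bE\otimes\wh\mu_D^0\big[\mathbf 1_{\ell\not\subset r\D}\,\mathbf 1_{\ell\subset D_\infty}\big].
\]
Then, conditionally on $\cL_\C^\cW$, applying the conformal invariance of $(\wh\mu_D)_{D\subset\C}$ established in Lemma~\ref{lem:ci-hmu} to the map $f_{D,\infty}:D_\infty\to\D$ transforms $\wh\mu_{D_\infty}^0$ into $\wh\mu_\D^0$ and sends the event $\{\ell\not\subset r\D,\ell\subset D_\infty\}$ to $\{\ell\not\subset f_{D,\infty}(r\D)\}$ (note $f_{D,\infty}(0)=0$, so ``surrounds $0$'' is preserved, and $\{\ell\subset D_\infty\}$ becomes $\{\ell\subset\D\}$ which is already built into $\wh\mu_\D^0$). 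This yields $\bE\otimes\wh\mu_\D^0[\ell\not\subset f_{D,\infty}(r\D)]$, which is \eqref{eq:recursion-loop}.

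The main subtlety I expect is a measurability/integrability check rather than a conceptual one: one must be careful that (i) the reweighting identity $\mu_\C^0[\,\cdot\,,\ell\subset D]=\wh\mu_D^0[\,\cdot\,e^{-\frac c2\cW(\ell,\partial D)}]$ is legitimate as an identity of (possibly infinite) measures, which is fine because we have restricted to the $\sigma$-finite piece ``surrounds $0$, $\ell\not\subset r\D$'' where everything is finite by the first part; (ii) the event $\{\ell\subset D_\infty\}$ and the map $f_{D,\infty}$ depend jointly measurably on the loop-soup configuration and on $\ell$, so that Fubini applies to the product $\bP\otimes\wh\mu_D^0$ — here one uses the a.s. local uniform convergence of the loop-soup fillings (as in Lemma~\ref{lem:dH}) or simply the standard fact that $D_\infty$ depends measurably on $\cL_\C^\cW$ and $f_{D,\infty}$ depends continuously on $D_\infty$ in the Carathéodory sense; and (iii) on the event that $0\notin D_\infty$ (i.e., the origin gets swallowed by $\cC_{\partial D}$), no loop surrounding $0$ can lie in $D_\infty$, so the indicator $\mathbf 1_{\ell\subset D_\infty}$ already kills that configuration — consistent with $f_{D,\infty}$ being defined only when $0\in D_\infty$. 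None of these is deep, but they are the points that need to be written carefully.
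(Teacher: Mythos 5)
There are two genuine gaps, and they concern the heart of the statement rather than the measurability points you flag at the end.

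First, your finiteness argument fails. A simple loop $\eta\subset\D$ surrounding the origin with $\eta\not\subset r\D$ does \emph{not} separate $r\S^1$ from $\partial\D$ (picture a loop hugging the origin with one excursion reaching past radius $r$ and coming very close to $\S^1$), so a loop can hit both $\eta$ and $\partial\D$ without touching $r\S^1$, and the claimed monotonicity $\cW(\eta,\partial\D)\le\cW(r\S^1,\partial\D)$ is false. Worse, $\sup\{\cW(\eta,\partial\D):\eta\subset\D,\ \eta\not\subset r\D,\ \eta\ \text{surrounds}\ 0\}=\infty$: if $\eta$ comes within distance $\eps$ of $\S^1$, then at every dyadic scale between $\eps$ and a macroscopic constant the $\cW$-mass of loops encircling the near-touching point (hence hitting both $\eta$ and $\S^1$) is a fixed positive constant by scale invariance, so $\cW(\eta,\partial\D)\gtrsim\log(1/\eps)$. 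Since the density in \eqref{eq:hmu} is $\exp(+\frac c2\cW(\eta,\partial\D))\ge1$ and unbounded on the relevant event, finiteness of $\wh\mu_\D^0[\ell\not\subset r\D]$ amounts to an exponential-moment bound for $\cW(\ell,\partial\D)$ under $\mu_\C^0$ and is genuinely nontrivial; in the paper it is not proved by direct comparison but extracted from the interpolation identity itself (Fatou applied to the iterated identity with $D=\D$, the positive probability of $\D_\infty\supset\frac12\D$, and Schwarz's lemma).

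Second, your one-step interpolation rests on a false identification: $\exp(-\frac c2\cW(\ell,\partial D))$ is the probability that no \emph{single} loop of $\cL_\C^{\cW}$ meets both $\ell$ and $\partial D$, i.e.\ the probability that $\ell\subset D_1$, where $D_1$ is one step of the exploration (the loop analogue of Definition~\ref{def:ep}); it is \emph{not} $\bP[\ell\subset D_\infty]$, the probability that no \emph{cluster} meets both, since for $c>0$ clusters can connect $\ell$ to $\partial D$ through chains of loops none of which meets both sets. Relatedly, even granting your displayed identity, $\wh\mu_D^0$ restricted to $\{\ell\subset D_\infty\}$ is not $\wh\mu_{D_\infty}^0$ (their densities with respect to $\mu_\C$ are $e^{\frac c2\cW(\ell,\partial D)}$ and $e^{\frac c2\cW(\ell,\partial D_\infty)}$ respectively), so Lemma~\ref{lem:ci-hmu} cannot be applied the way you do. Repairing these two points is precisely the content of the paper's proof: one iterates the single-loop step ($D_0=D$, $D_{n+1}$ built from the loops hitting $\partial D_n$), rewrites each step via Lemma~\ref{lem:ci-hmu} as a statement about $\wh\mu_\D^0$ and $f_{D,n}$, and then passes to the limit $D_n\to D_\infty$: Fatou yields both the finiteness of $\wh\mu_\D^0[\ell\not\subset r\D]$ and one inequality, and dominated convergence (enabled by that finiteness together with Schwarz's lemma, which gives $r\D\subset f_{D,n}(r\D)$) yields the equality \eqref{eq:recursion-loop}. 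Your shortcut skips the iteration and the limiting argument, which cannot be avoided here.
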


To prove Proposition~\ref{prop:priori}, we use ideas similar to Section~\ref{sec:general}. First, we define an exploration process analogous to Definition~\ref{def:ep} as follows. Let $D_0:=D$, and $D_{n+1}\subset D_n$ be the connected component containing $0$ of the complement of the closure of the union of $\partial D_n$ and all loops in $\mathcal{L}^\cW_\C$ that intersect $\partial D_n$. 
Note that $D_\infty$ is an open set and has the origin as its interior point, as discussed above Theorem~\ref{thm:intersect}. Hence, each $D_n$ is well-defined and $D_\infty\subset D_n$.
We have the following result analogous to Lemma~\ref{lem:dH}. 

\begin{lemma}\label{lem:Dni}
$\bP$-a.s. $\lim_{n\to\infty}d_H(D_n, D_\infty)=0$, and $f_{D,n}^{-1}\to f_{D,\infty}^{-1}$ locally uniformly.
\end{lemma}

\begin{proof}
For the first claim, by the inversion invariance of $\mathcal{L}^\cW_\C$ (under $z\mapsto 1/z$) \cite{werner2008conformally}, it suffices to show that 
\begin{equation}\label{eq:Dni}
    \text{$\bP$-a.s.\ } \lim_{n\to\infty}d_H(F_n, F_\infty)=0,
\end{equation}
where $(F_n)_{n\ge0}$ is the exploration process such that $F_0:=D$, and for all $n\ge0$, $F_{n+1}$ is the filling of the union of $F_n$ and all loops in $\mathcal{L}^\cW_\C$ that intersect $F_n$, and $F_\infty$ is the filling of the union of $D$ and all clusters in $\mathcal{L}^\cW_\C$ that intersect $D$. Below, we will couple $\mathcal{L}^\cW_\C$ and the Brownian loop soup $\cL_{\C}$ in the plane such that $\mathcal{L}^\cW_\C$ is given by the collection of all outer boundaries of Brownian loops in $\cL_{\C}$. In fact, they are the same if we consider the fillings of loops in these two collections (this observation will be useful below). We recall the fact that for a deterministic domain $V$, the collection of loops in $\mathcal{L}^\cW_\C$ that are in $V$ is distributed as a $\SLE_{8/3}$ loop soup in $V$, which is a Poisson point process with intensity measure $\frac c2\cW\mid_V$.

We will reduce the analysis to some random bounded domain $U$ by using some basic facts about the $\SLE_{8/3}$ loop soup from \cite{kemppainen2016nested}.  
By inversion invariance and results in~\cite{sheffield2012conformal}, there a.s. exists a unique cluster $\Cout$ in $\mathcal{L}^\cW_\C$ that \emph{surrounds} $D$ (i.e.\ its filling contains $D$) and has the smallest and positive distance to $D$\footnote{Specifically, let $\hat D$ be the image of $\mathbb C\setminus \overline D$ under the map $z\mapsto 1/z$. 
By inversion, $\Cout$ will correspond to the outermost cluster surrounding $0$ in the $\SLE_{8/3}$ loop soup on $\hat D$ (once it exists and has has a positive distance to $\partial\hat D$). First, by~\cite[Section~2.1]{kemppainen2016nested}, such an outermost cluster in $\hat D$ exists, and its outer boundary has the same law as the outer boundary of the outermost cluster surrounding $0$ in the Brownian loop soup on $\hat D$. Furthermore, this outer boundary has a positive distance to $\partial\hat D$ almost surely by~\cite[Lemma~9.4]{sheffield2012conformal}. This gives the result.},
which is a.s. bounded. Let $U$ be the connected component containing $D$ of the complement of the closure of $\Cout$, which is a bounded domain since $\Cout$ is bounded. We denote by $\mathcal{L}^\cW_U$ the collection of loops in $\mathcal{L}^\cW_\C$ that are in $U$.
Then, 
as we will later show in Appendix~\ref{appd2}, we have
\begin{equation}\label{eq:loop_soup}
    \begin{array}{c}\text{$\mathcal{L}^\cW_{U}$ given $U$ is distributed as a $\SLE_{8/3}$ loop soup in $U$ conditioned }\\ 
    \text{ to have no cluster that surrounds $D$ without intersecting $D$.}
    \end{array}
\end{equation}

By definition, $(F_n)_{n\ge 0}$ and $F_\infty$ only depend on the collection of fillings of loops in $\mathcal{L}^\cW_{U}$, which is the same as the collection of fillings of Brownian loops in $\mathcal{L}_{U}$ ($\cL_{\C}$ restricted to loops in $U$). Hence, in order to get \eqref{eq:Dni}, we can first condition on $U$ and then consider the exploration process $(F_n)_{n\ge 0}$ as defined via the fillings of loops in the Brownian loop soup $\mathcal{L}_{U}$ instead.
Now, one can iterate the same proof as that of Lemma~\ref{lem:dH} by replacing $\D$ and $A$ with $U$ and $D$, respectively\footnote{More precisely, for any small $\eps>0$, let $(\cC_i)_{1\le i\le m}$ be the clusters in the Brownian loop soup $\mathcal{L}_{U}$ with diameter larger than $\eps$ that intersect $D$. By \cite[Theorem 4.1]{van2016random} again (which holds for any bounded simply connected domain), every cluster $\cC_i$, $1\le i\le m$, can be approximated by a sequence of finite subclusters $\cC_i^N$ of $\cC_i$ such that $\lim_{N\to\infty}d_H(\Fill(\cC_i^N),\Fill(\cC_i))=0$. Since each finite subcluster $\cC_i^N$ is contained in $F_n$ for sufficiently large $n$, we conclude $\lim_{n\to\infty}d_H(F_n, F_\infty)=0$.}, to conclude \eqref{eq:Dni}.
This finishes the first claim.

Once $D_n\to D_\infty$ in Hausdorff distance, we have $(D_n,0)\to(D,0)$ in the Carath\'eodory sense since $D_n$ is decreasing and $0\in D_\infty$. By Carath\'eodory kernel theorem (Theorem~\ref{thm:Caratheodory}), $f_{D,n}^{-1}\to f_{D,\infty}^{-1}$ locally uniformly.
\end{proof}

\begin{proof}[Proof of Proposition~\ref{prop:priori}]
Using \eqref{eq:hmu}, we obtain that
	\begin{align}\label{eq:rec-loop1}
		\mu_\C^0[\ell\not\subset r\D,\ell\subset D]=\wh\mu_D^0\Big[{\bf 1}_{\ell\not\subset  r\D} \exp\Big(-\frac{c}{2}\mathcal{W}(\ell,\partial D)\Big)\Big]
		=\bE\otimes\wh\mu_D^0[\ell\not\subset r\D,\ell\subset D_1].
	\end{align}  
    Similar to \eqref{eq:KA1}, in order to get the last equality, we also need to use the local finiteness property of $\mathcal{L}^\cW_\C$, that is, the number of loops in $\mathcal{L}^\cW_\C$ that intersect $D$ and have diameter at least $\eps$ is a.s. finite. To see this, it suffices to note that the $\cW$-mass of the loops intersecting $D$ and of diameter at least $\eps$ is finite.\footnote{Indeed, for some large fixed $R>0$ such that $R\D\supset D$, note that both $\cW(R\S^1,\partial D)$ and the $\cW$-mass of the loops contained in $R\D$ with diameter at least $\eps$ are finite (the former is due to e.g.~\cite[Page 841, the third paragraph]{kemppainen2016nested}, and the latter is by the local finiteness of the Brownian loop measure shown before).}
    
Let $f_{D,n}$ be the conformal map from $D_n$ to $\D$ such that $f_{D,n}(0)=0$ and $f'_{D,n}(0)>0$. 
	Iterating \eqref{eq:rec-loop1}, we conclude that for all $n$, 
    \begin{equation}\label{eq:rec-loop2}
    \begin{aligned}
		\mu_\C^0[\ell\not\subset r\D,\ell\subset D]&=\bE\otimes\wh\mu_{D_n}^0[\ell\not\subset r\D,\ell\subset D_{n+1}]\\&=\bE\otimes\wh\mu_{\D}^0[\ell\not\subset f_{D,n}( r\D),\ell\subset f_{D,n}(D_{n+1})],
	\end{aligned}
    \end{equation}
	where we used the conformal invariance of $\wh\mu_{\D}$ from Lemma~\ref{lem:ci-hmu}.

By Lemma~\ref{lem:Dni}, $f_{D,n}^{-1}\to f_{D,\infty}^{-1}$ in the local uniform topology. In particular, we have
	\begin{align}\label{eq:rec-loop5}
		{\bf 1}_{\ell\not\subset f_{D,n}( r\D), \ell\subset f_{D,n}(D_{n+1})}\to{\bf 1}_{\ell\not\subset f_{D,\infty}( r\D)},\quad\bE\otimes\wh\mu_{\D}^0-{\rm a.e.} 
	\end{align}
	Then, by Fatou's Lemma, taking $n\to\infty$ in \eqref{eq:rec-loop2}, we obtain that for any $0<r<1$,
	\begin{align}\label{eq:rec-loop6}
		\infty>\mu_\C^0[\ell\not\subset r\D,\ell\subset D]\ge\bE\otimes\wh\mu_\D^0[\ell\not\subset f_{D,\infty}( r\D)].
	\end{align}
	
Next, we will show that \eqref{eq:rec-loop6} implies $\wh\mu_{\D}^0[\ell\not\subset r\D]<\infty$. First note that $\bP$-a.s. $\D_\infty$ contains $0$ as an interior point. This implies that there exists an $\varepsilon_0\in (0,1)$ such that $\D_\infty\supset\varepsilon_0\D$ with a positive probability $p_0$ under $\bP$. Then, we show that  $\D_\infty\supset\varepsilon_0\D$ implies $f_{\D,\infty}( \eps_0r\D)\subset r\D$. 
Noting that $z\mapsto f_{\D,\infty}(\varepsilon_0z)$ defines a holomorphic function on $\D$ taking values in $\D$, we get $|f_{\D,\infty}(\varepsilon_0 z)|\le |z|$ from Schwarz lemma. In particular, for any $z\in \eps_0r\D\subset \D_\infty$, it holds that $|f_{\D,\infty}(z)|\le |\varepsilon_0^{-1}z|\le r$, and $f_{\D,\infty}( \eps_0 r\D)\subset  r\D$. Therefore,
\begin{align*}
p_0\,\wh\mu_\D^0[\ell\not\subset r\D] \le \bE\otimes\wh\mu_\D^0[\ell\not\subset f_{\D,\infty}( \eps_0r\D)].
\end{align*}
Since~\eqref{eq:rec-loop6} holds for any $r\in(0,1)$, it also holds with $r$ replaced by $\varepsilon_0 r$, which implies the RHS above is finite, and we obtain $\wh\mu_{\D}^0[\ell\not\subset r\D]<\infty$ as desired. It also follows that $\bE\otimes\wh\mu_{\D}^0[\ \cdot\ ;\ \ell\not\subset r\D]$ is a finite measure for any fixed $r$, since its total mass is just equal to $\wh\mu_{\D}^0[\ell\not\subset r\D]<\infty$.
	
	It remains to show~\eqref{eq:recursion-loop}. We go back to~\eqref{eq:rec-loop2}. By Schwarz lemma again, $D_n\subset\D$ gives $ r\D\subset f_{D,n}( r\D)$. Hence, we can rewrite~\eqref{eq:rec-loop2} as
	\begin{align}\label{eq:rec-loop3}
		\mu_\C^0[\ell\not\subset r\D,\ell\subset D]=\bE\otimes\wh\mu_{\D}^0[\ell\not\subset f_{D,n}( r\D),\ell\subset f_{D,n}(D_{n+1});\ \ell\not\subset r\D].
	\end{align}
	Then, taking $n\to\infty$ in \eqref{eq:rec-loop3}, by~\eqref{eq:rec-loop5} and the dominated convergence theorem (using that $\bE\otimes\wh\mu_{\D}^0[\ \cdot\ ;\ \ell\not\subset r\D]$ is a finite measure), we finally obtain~\eqref{eq:recursion-loop}.
\end{proof}

In the following, we first provide several basic estimates on $\mu_\C^0$ and $\wh\mu_\D^0$ in Section~\ref{sec:apriori}, and then finish the proof of Theorems~\ref{thm:loop-unique} and~\ref{thm:intersect} in Section~\ref{subsec:hitting}.

\subsection{A priori estimates on $\mu_\C^0$ and $\wh\mu_\D^0$}\label{sec:apriori}

Let $\|\ell\|_\infty:=\max_{z\in\ell}|z|$ for a simple loop $\ell\subset\C$.

\begin{lemma}\label{lem:c-mass}
For any $\mu_\C$ with parameter $c\in[0,1]$, there is a $\lambda\in(0,\infty)$ such that for any $a>b>0$, $\mu_\C^0(\|\ell\|_\infty\in[a,b))=\lambda\log\frac{b}{a}$.
\end{lemma}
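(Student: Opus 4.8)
\textbf{Proof proposal for Lemma~\ref{lem:c-mass}.}

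The plan is to show that the map $a\mapsto \mu_\C^0(\|\ell\|_\infty\ge a)$ behaves like $-\lambda\log a$ by exploiting the scale invariance of $\mu_\C$ together with the finiteness already established in Proposition~\ref{prop:priori}. First I would record that, since $\mu_\C$ is an MKS measure, it is invariant under the conformal automorphisms of $\C$, and in particular under the scaling $z\mapsto \rho z$ for any $\rho>0$ (scalings are conformal maps from $\C$ to $\C$, and $\C$ is trivially conformally equivalent to itself, so $\mu_{\rho\C}=\mu_\C$ forces scale invariance; alternatively one uses that $\Lambda^*$ is scale-invariant and $\mu_\D$ transforms correctly). Consequently the push-forward of $\mu_\C^0$ under $z\mapsto\rho z$ is again $\mu_\C^0$ (scaling preserves the property of surrounding the origin), so $\mu_\C^0(\|\ell\|_\infty\ge a)=\mu_\C^0(\|\ell\|_\infty\ge a/\rho)$ — i.e. the function $g(a):=\mu_\C^0(\|\ell\|_\infty\ge a)$ satisfies $g(\rho a)=g(a/\rho\cdot\rho^2)$... more cleanly, $g(a)$ depends on $a$ only through... let me restate: scale invariance gives $g(\rho a) = \mu_\C^0(\rho\|\ell\|_\infty\ge\rho a)=\mu_\C^0(\|\ell\|_\infty\ge a)$ would be wrong; rather $\mu_\C^0(\|\ell\|_\infty\in[a,b))$ is invariant under $(a,b)\mapsto(\rho a,\rho b)$. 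So defining $G(t):=\mu_\C^0(\|\ell\|_\infty\in[1,e^t))$ for $t>0$, the scaling and the obvious additivity $\mu_\C^0(\|\ell\|_\infty\in[a,c))=\mu_\C^0(\|\ell\|_\infty\in[a,b))+\mu_\C^0(\|\ell\|_\infty\in[b,c))$ for $a<b<c$ together yield $G(s+t)=G(s)+G(t)$ for all $s,t>0$.

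Next I would upgrade this Cauchy-type functional equation to linearity. The function $G$ is nondecreasing in $t$ (it is a measure of an increasing family of events), and by Proposition~\ref{prop:priori} (with $D=\D$, noting $\wh\mu_\D^0[\ell\not\subset r\D]<\infty$ and hence $\mu_\C^0[\ell\not\subset r\D,\ell\subset\D]<\infty$, then letting the ambient disk grow by scaling) we have $G(t)<\infty$ for every finite $t$; in fact $\mu_\C^0(\|\ell\|_\infty\in[a,b))<\infty$ for all $0<a<b<\infty$ because a loop surrounding the origin with $\|\ell\|_\infty<b$ is contained in $b\D$ and has diameter at least $2a$, so this mass is controlled by the non-triviality hypothesis on $\mu_\C$ directly. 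A monotone additive function on $(0,\infty)$ is automatically linear: $G(t)=\lambda t$ for some $\lambda\in[0,\infty)$. Translating back, $\mu_\C^0(\|\ell\|_\infty\in[a,b))=G(\log(b/a))=\lambda\log(b/a)$ for $a<b$; to match the statement's formula $\lambda\log\frac{b}{a}$ with $a>b$ one simply renames, or notes the statement as written has $a>b>0$ so $\log\frac ba<0$ and the mass is $\lambda\log\frac ab>0$ — I would just be careful to state it consistently with $a<b$ and $\lambda>0$.

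The one genuine thing to rule out is $\lambda=0$, i.e. that $\mu_\C^0$ does not assign zero mass to every annulus (equivalently, $\mu_\C^0\not\equiv 0$, equivalently $\mu_\C$ has loops surrounding the origin at all). If $\lambda=0$ then $\mu_\C^0\equiv 0$, so $\mu_\C$-a.e. loop does not surround the origin; by scale- and in fact full conformal invariance (Möbius transformations of $\hat\C$) the same would hold with the origin replaced by any point, forcing $\mu_\C$ to be supported on loops bounding a disk containing no point of $\C$ — impossible for a non-trivial measure on simple loops, since any simple loop surrounds some point. Hence $\lambda>0$. I expect this positivity/non-degeneracy step, while conceptually easy, to be the only place requiring a short argument; one clean way is to invoke that for a non-trivial $\mu_\C$, restricting to a bounded domain $D$ and using $\mu_D$ on loops of diameter $>\delta$ gives a finite nonzero measure, and a positive fraction of such loops surround any fixed interior point by a standard topological/harmonic-measure lower bound — I would cite the analogous fact for $\mathcal{W}$ in~\cite{werner2008conformally} as a model. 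The additivity and the monotone-implies-linear steps are then routine.
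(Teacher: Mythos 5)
Your argument is essentially the paper's proof: set $U(a,b)=\mu_\C^0(\|\ell\|_\infty\in[a,b))$, use additivity over concatenated annuli and scale invariance to get a Cauchy functional equation for $x\mapsto U(1,x)$, then invoke monotonicity to conclude linearity. You are right that the statement as printed has $a>b$ swapped (the intent is $a<b$, as the paper's proof makes clear), and your supplementary remarks — that finiteness of $U(a,b)$ follows directly from the non-triviality definition (a loop around the origin with $\|\ell\|_\infty\in[a,b)$ lies in $b\D$ and has diameter $\ge a$), and that $\lambda>0$ can be reduced to non-vanishing of $\mu_\C$ via translation invariance and a countable cover by rational base-points — are correct fills for steps the paper leaves implicit.
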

\begin{proof}
Since $\mu_\C$ is \emph{non-trivial} (see the paragraph above Definition~\ref{def:MKS}), we have $U(a,b):=\mu_\C^0(\|\ell\|_\infty\in[a,b))\in (0,\infty)$. Then $U(a,b)+U(b,c)=U(a,c)$ for any $0<a<b<c$. Furthermore, the scaling invariance of $\mu_\C^0$ gives $U(a,b)=U(ka,kb)$ for all $k>0$. Then $x\mapsto U(1,x)$ is increasing with $U(1,xy)=U(1,x)+U(1,y)$; thus $U(1,x)=\lambda\log x$ for some $\lambda\in (0,\infty)$.
\end{proof}

Note that $\lambda$ depends on the specific choice of $\mu_\C$ as mentioned. However, we can determine the following exact value of $\lambda$ for $c\in(0,1]$ when $\mu_\C$ is equal to the counting measure on full-plane $\CLE_\kappa$ loops.

\begin{corollary}\label{cor:lambda0}
	For $c\in (0,1]$, let $\mu_\C=\SLE_\kappa^\lp$, i.e. the counting measure on full-plane $\CLE_\kappa$ loops, then the $\lambda$ in Lemma~\ref{lem:c-mass} is given by $\lambda=\frac{1}{\pi}(\frac{\kappa}{4}-1)\cot(\pi(1-\frac{4}{\kappa}))$.
\end{corollary}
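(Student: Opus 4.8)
The plan is to relate $\lambda$, via a renewal argument, to the conformal radius of the outermost $\CLE_\kappa$ loop around a point, for which an exact formula of Schramm, Sheffield and Wilson is available; the only real work is in passing from the Euclidean size $\|\ell\|_\infty$ to this conformally natural quantity. For a loop $\ell$ surrounding the origin --- which is simple since $\kappa\le4$ --- let $U_\ell$ be the domain it encloses and set $\mathrm{cr}(\ell):=\CR(0,U_\ell)$. If $\ell'$ is the loop around $0$ lying immediately inside $\ell$ in the full-plane $\CLE_\kappa$, then by the domain Markov property and conformal invariance of $\CLE_\kappa$, the ratio $\mathrm{cr}(\ell')/\mathrm{cr}(\ell)$ is independent of the configuration outside $\ell$ and is distributed as $\Upsilon$, the conformal radius from $0$ of the component of $\D\setminus L_0$ containing $0$, where $L_0$ is the outermost loop around $0$ in $\CLE_\kappa(\D)$. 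Hence $\{-\log\mathrm{cr}(\ell):\ell\text{ surrounds }0\}$ is a stationary renewal process on $\R$ --- stationary because the full-plane $\CLE_\kappa$ is scale invariant --- with i.i.d.\ gaps distributed as $-\log\Upsilon$, so $\mu_\C^0(\mathrm{cr}(\ell)\in[a,b))=\bigl(\E[-\log\Upsilon]\bigr)^{-1}\log(b/a)$ for all $0<a<b$. In particular this is finite, and it identifies the conformal-radius analogue of the constant in Lemma~\ref{lem:c-mass} as $\lambda_{\mathrm{cr}}:=1/\E[-\log\Upsilon]$; here $\E[-\log\Upsilon]\in(0,\infty)$, positivity being clear as $\Upsilon<1$ a.s., and finiteness using $\kappa>8/3$ (the mean diverges as $\kappa\downarrow8/3$, consistently with $\lambda\to0$ at $c=0$).

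Next, by the conformal radius formula of Schramm, Sheffield and Wilson, for $\lambda$ in a neighbourhood of $0$ one has
\[
\E[\Upsilon^{\lambda}]=\frac{\cos\!\bigl(\pi(1-4/\kappa)\bigr)}{\cos\!\bigl(\pi\sqrt{(1-4/\kappa)^2-8\lambda/\kappa}\bigr)}.
\]
Differentiating at $\lambda=0$ yields $\E[-\log\Upsilon]=-\tfrac{d}{d\lambda}\big|_{0}\E[\Upsilon^{\lambda}]=\tfrac{4\pi}{4-\kappa}\tan\!\bigl(\pi(\tfrac4\kappa-1)\bigr)$, and therefore
\[
\lambda_{\mathrm{cr}}=\frac{4-\kappa}{4\pi}\cot\!\Bigl(\pi\bigl(\tfrac4\kappa-1\bigr)\Bigr)=\frac1\pi\Bigl(\tfrac\kappa4-1\Bigr)\cot\!\Bigl(\pi\bigl(1-\tfrac4\kappa\bigr)\Bigr),
\]
using $\tfrac\kappa4-1=-\tfrac{4-\kappa}4$ and $\cot(\pi(1-\tfrac4\kappa))=-\cot(\pi(\tfrac4\kappa-1))$. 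It remains to show $\lambda=\lambda_{\mathrm{cr}}$.

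By Schwarz's lemma one always has $\mathrm{cr}(\ell)\le\|\ell\|_\infty$, while $R(\ell):=\|\ell\|_\infty/\mathrm{cr}(\ell)\in[1,\infty)$ can be large but is a.s.\ finite; by scale invariance of the full-plane $\CLE_\kappa$, the conditional law of $R(\ell)$ given $\mathrm{cr}(\ell)=r$ (resp.\ given $\|\ell\|_\infty=s$) does not depend on $r$ (resp.\ $s$), so $q(M):=\bP[R(\ell)>M\mid\|\ell\|_\infty=1]$ and $\tilde q(M):=\bP[R(\ell)>M\mid\mathrm{cr}(\ell)=1]$ tend to $0$ as $M\to\infty$. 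Since a loop with $\|\ell\|_\infty\in[1,x)$ and $R(\ell)\le M$ has $\mathrm{cr}(\ell)\in[1/M,x)$, and conversely a loop with $\mathrm{cr}(\ell)\in[1,x/M)$ and $R(\ell)\le M$ has $\|\ell\|_\infty\in[1,x)$, splitting the counts according to $\{R(\ell)\le M\}$ and using Lemma~\ref{lem:c-mass} together with the first paragraph gives $\lambda(1-q(M))\le\lambda_{\mathrm{cr}}$ and $\lambda\ge\lambda_{\mathrm{cr}}(1-\tilde q(M))$ in the limit $x\to\infty$; letting $M\to\infty$ forces $\lambda=\lambda_{\mathrm{cr}}$, which is the asserted value. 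The main obstacle is precisely this last comparison: the pointwise inequality $\|\ell\|_\infty\le C\,\mathrm{cr}(\ell)$ is false --- a $\CLE_\kappa$ loop may have a long thin tendril far from the bulk of the region it encloses --- so one cannot match the two sizes loop by loop and must instead use scale invariance of the full-plane $\CLE_\kappa$ to argue that such distorted loops are too rare to change the asymptotic density. The remaining ingredients (the renewal structure of $\CLE_\kappa$ loops around a point and the Schramm--Sheffield--Wilson formula) are standard.
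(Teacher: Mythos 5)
Your argument is correct, but it takes a genuinely different route from the paper. The paper's proof is essentially a citation: it compares Lemma~\ref{lem:c-mass} with \cite[Lemma 9.2]{ang2024sle} (itself based on \cite{schramm2009conformal}), which states that the expected number of full-plane $\CLE_\kappa$ loops around $0$ with $\|\ell\|_\infty\in[1,e^C)$ grows like $\frac{C}{\pi}(\frac{\kappa}{4}-1)\cot(\pi(1-\frac{4}{\kappa}))$. What you do is in effect reprove that imported lemma from scratch: (i) the nested chain of loops around $0$ has exactly multiplicative conformal-radius ratios (this uses that $\CR(0,\phi^{-1}(V))=\CR(0,V)\,\CR(0,U_\ell)$ for the uniformizing map $\phi$ of $U_\ell$, which is correct), i.i.d.\ by the Markov property, so by scale invariance of the full-plane $\CLE_\kappa$ the points $-\log\mathrm{cr}(\ell)$ form a stationary renewal process of intensity $1/\E[-\log\Upsilon]$; (ii) the Schramm--Sheffield--Wilson formula differentiated at $0$ gives $\E[-\log\Upsilon]=\frac{4\pi}{4-\kappa}\tan(\pi(\frac{4}{\kappa}-1))$, and your algebra converting this to the stated constant checks out (and is consistent at $\kappa=4$, where it gives $1/\pi^2$); (iii) a comparison between the Euclidean size $\|\ell\|_\infty$ and $\mathrm{cr}(\ell)$, using the scale-invariant factorization of $\mu_\C^0$ into $d(\log\text{scale})\otimes(\text{shape law})$ and the tails $q(M),\tilde q(M)\to0$, shows the two logarithmic densities coincide; your two sandwich inequalities and the limits $x\to\infty$, then $M\to\infty$, are correct. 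The trade-off: the paper's proof is two lines but opaque (all content is outsourced), while yours is self-contained modulo SSW and the Kemppainen--Werner structure of full-plane $\CLE_\kappa$, and it makes transparent why counting by conformal radius and by Euclidean radius give the same constant --- which is exactly the content hidden in the cited lemma. The facts you invoke without proof (Palm/renewal intensity $=1/\text{mean}$, the disintegration of the scale-invariant infinite measure defining $q$ and $\tilde q$, and the nesting Markov property of full-plane $\CLE_\kappa$) are indeed standard, so I see no genuine gap, only steps that would deserve a reference or a sentence of justification in a final write-up.
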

\begin{proof}
	Let $\Gamma$ be sampled from the full-plane $\CLE_\kappa$, and $N_C$ be the number of loop $\ell$'s in $\Gamma$ surrounding the origin with $\|\ell\|_\infty\in[1,e^C)$. By~\cite[Lemma 9.2]{ang2024sle} (which is based on~\cite{schramm2009conformal}), we have $\frac{\E[N_C]}{C}\to\frac{1}{\pi}(\frac{\kappa}{4}-1)\cot(\pi(1-\frac{4}{\kappa}))$ as $C\to\infty$. Comparing with Lemma~\ref{lem:c-mass}, the result then follows.
\end{proof}

We also mention that for $c=0$ and $\mu_{\C,c=0}$ replaced by Werner's measure $\mathcal{W}$, the corresponding $\lambda$ in Lemma~\ref{lem:c-mass} equals $\frac{\pi}{5}$~\cite[Page 151]{werner2008conformally}.

\begin{lemma}\label{lem:83mass}
	There is an $\alpha>0$ such that for any $0<\eps<\frac{1}{2}$, $\mathcal{W}(\eps\D,\S^1)=O(\eps^\alpha)$.
\end{lemma}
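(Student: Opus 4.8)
\textbf{Proof plan for Lemma~\ref{lem:83mass}.}
The quantity $\mathcal{W}(\eps\D,\S^1)$ is the $\mathcal{W}$-mass of $\SLE_{8/3}$-type loops (outer boundaries of Brownian loops on $\C$) that meet both the small disk $\eps\D$ and the unit circle $\S^1$; such a loop must have diameter at least $1-\eps\ge\frac12$ and must come within distance $\eps$ of the origin. The plan is to translate this into a statement about the Brownian loop measure $\mu^{\rm BL}$ and then use the known scaling and hitting estimates for Brownian loops. Concretely, a loop $\ell$ counted by $\mathcal{W}(\eps\D,\S^1)$ is the outer boundary of some Brownian loop $\gamma$; since the outer boundary of $\gamma$ surrounds $\gamma$, the condition ``$\ell$ meets $\eps\D$'' forces $\ell$ (hence a point of $\gamma$ on its outer boundary, or a point surrounded by $\ell$) to be within $\eps$ of $0$; in any case $\gamma$ intersects $\eps\D$ or surrounds $0$ from distance $O(\eps)$, and ``$\ell$ meets $\S^1$'' forces $\gamma$ to reach within $\eps$ of $\S^1$, so in particular $\operatorname{diam}(\gamma)\ge \frac12-\eps$. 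Thus
\[
\mathcal{W}(\eps\D,\S^1)\le \mu^{\rm BL}\big(\{\gamma:\ \gamma\cap 2\eps\D\neq\emptyset,\ \operatorname{diam}(\gamma)\ge \tfrac13\}\big)+(\text{a surrounding term}),
\]
and it suffices to bound the right-hand side by $O(\eps^\alpha)$ for some $\alpha>0$.

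The key input is the explicit form of the Brownian loop measure. Recall $\mu^{\rm BL}=\int_{\C}\int_0^\infty \frac{1}{2\pi t^2}\,\mu^{\rm br}_{z,t}\,dt\,dA(z)$, where $\mu^{\rm br}_{z,t}$ is the (normalized) Brownian bridge measure of time length $t$ rooted at $z$, and the $t^{-2}$ weight is what makes large loops rare. First I would split the integral over the root $z$ and the time $t$. For the loop to intersect $2\eps\D$, its root $z$ must lie within $2\eps+\operatorname{diam}(\gamma)$ of the origin; combined with $\operatorname{diam}(\gamma)\ge\frac13$ this is a mild constraint, so the real gain has to come from the small disk being hit. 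Using the standard estimate that a Brownian bridge of length $t$ rooted at distance $\rho$ from a disk of radius $\eps$ hits that disk with probability $O\big(\frac{1}{\log(\rho/\eps)}\big)$ when $t\gtrsim \rho^2$ (the two-dimensional logarithmic hitting estimate), one gets a factor $O(1/\log(1/\eps))$, which decays but not polynomially. To get a genuine power $\eps^\alpha$ one instead uses that the loop must hit $2\eps\D$ \emph{and} have macroscopic diameter: decompose the bridge into pieces and note it must make an excursion from near $\S^1$ down to $2\eps\D$ and back; by the restriction/Markov property of Brownian loops and the estimate that a Brownian path started near $\S^1$ hits $2\eps\D$ before exiting $\D$ with probability $O(1/\log(1/\eps))$ \emph{on each of a definite number of independent trials forced by the macroscopic diameter}, or more simply by using the conformal restriction covariance to map $2\eps\D$ to a fixed disk and tracking the derivative, one can trade the logarithm for a power. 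Alternatively, and more cleanly, I would invoke the known two-point estimate for $\mathcal{W}$ (or $\mu^{\rm BL}$) of the Beffara/Lawler type: the mass of loops in $\D$ separating $2\eps\D$ from $\S^1$ is comparable to $(\log(1/\eps))^{?}$ but the mass of loops \emph{through} a disk of radius $\eps$ that also reach $\S^1$ carries an extra polynomial factor coming from the area element $dA(z)$ near the origin together with the Beurling-type bound on the bridge.

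\textbf{Main obstacle.} The crux is obtaining a \emph{polynomial} bound $\eps^\alpha$ rather than merely $O(1/\log(1/\eps))$; a naive union-over-time-and-root estimate using only the two-dimensional hitting probability gives only the logarithmic rate. The resolution I expect the authors to use is that ``$\ell$ meets $\S^1$'' is a much stronger constraint than ``$\operatorname{diam}(\ell)\ge\frac12$'' once we also demand ``$\ell$ meets $\eps\D$'': the loop must cross the annulus $\eps\D\subset\cdots\subset\S^1$, i.e. pass through $\sim \log(1/\eps)$ disjoint scales, and at each dyadic scale the conditional probability of continuing inward toward $\eps\D$ rather than closing up is bounded away from $1$ by a universal constant, so the total probability is $\exp(-\Omega(\log(1/\eps)))=\eps^{\Omega(1)}$. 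Making this multi-scale crossing argument rigorous — choosing the scales, applying the Markov property of the Brownian bridge at the successive hitting times of the scale circles, and controlling the $t$-integral uniformly — is the technical heart; everything else (the reduction from $\mathcal{W}$ to $\mu^{\rm BL}$, the area integral over the root, the bookkeeping with $2\eps$ vs.\ $\eps$) is routine. One then reads off any admissible $\alpha$, e.g.\ any $\alpha$ strictly below the exponent produced by the scale-count, which is all the lemma requires.
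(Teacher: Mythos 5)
There is a genuine gap, and it sits exactly at the step you flag as the ``technical heart.'' Once you pass from $\mathcal{W}(\eps\D,\S^1)$ to the $\mu^{\rm BL}$-mass of Brownian loops of macroscopic diameter whose \emph{trace} hits $2\eps\D$, no argument can recover a power of $\eps$: that mass is genuinely of order $1/\log(1/\eps)$. Your multi-scale claim --- that at each dyadic scale the conditional probability of continuing inward toward $\eps\D$ rather than closing up is bounded away from $1$ by a universal constant --- is false for Brownian paths: by the gambler's-ruin estimate, a path that has reached radius $2^{-k}$ reaches radius $2^{-k-1}$ before returning to scale $1$ with probability $1-O(1/k)$, and the product over the $\log(1/\eps)$ scales is $\asymp 1/\log(1/\eps)$, in agreement with the logarithmic capacity estimate you yourself quote; so the proposed mechanism cannot beat the logarithm. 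The polynomial decay asserted in the lemma comes precisely from the structure your reduction discards: the hypothesis is that the \emph{outer boundary} $\ell$ of the Brownian loop meets $\eps\D$, which is a non-disconnection-type constraint (the frontier has dimension $4/3$, so its $r$-neighborhood has small expected area), not a mere hitting constraint on the path. (A secondary issue: with only $\operatorname{diam}(\gamma)\ge\frac13$ and no confinement of the loop, the large-loop contribution to your right-hand side is not small either.)

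The paper's proof keeps the outer-boundary structure and is essentially a citation of Nacu--Werner: by \cite[Lemma 2]{NW11} and scaling invariance of $\mathcal{W}$, one bounds $\mathcal{W}(\eps\D,\S^1)$ by $\int_0^{2\eps} r^{-1}\,\E\big[\mathrm{Area}(\{z:\dist(z,\gamma)\le r\})\big]\,dr$, where $\gamma$ is the outer boundary of a Brownian loop of time-length $1$, and then invokes the proof of \cite[Lemma 4]{NW11}, which gives that this expected neighborhood area is $O(r^{\alpha})$ for some $\alpha>0$ (the optimal exponent being $2/3$, via the Brownian disconnection exponents); integrating yields $O(\eps^{\alpha})$. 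If you want to argue from scratch rather than cite, the estimate you need at small scales is of disconnection type --- the cost for the loop to approach a point within distance $r$ while leaving it on its outer boundary --- not a crossing estimate for the Brownian path.
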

\begin{proof}
	This basically follows from \cite{NW11}. We add some details here for completeness. First, by \cite[Lemma 2]{NW11} and scaling invariance of $\cW$, we have 
	\begin{equation}\label{eq:area}
		\cW(\eps\D,\S^1)\le\int_0^{2\eps} \frac{1}{r}\, \E(\mathrm{Area}(\{z: \dist(z,\gamma)\le r\}))\, dr,
	\end{equation}
	where $\mathrm{Area}$ denotes the Lebesgue measure on $\R^2$, and $\gamma$ under $\E$ has the law of the outer boundary of the Brownian loop of time-length $1$. By the proof of \cite[Lemma 4]{NW11}, the expected area of $r$-neighborhood of $\gamma$ is $O(r^\alpha)$ for some constant $\alpha>0$. This concludes the proof from \eqref{eq:area}.
\end{proof}

In fact, the optimal $\alpha$ in Lemma~\ref{lem:83mass} is $\frac23$, which can be deduced from the Brownian disconnection exponents. However, we will not need such an explicit bound.

\begin{proposition}\label{prop:dc}
For any $\mu_\C$ with parameter $c\in (0,1]$, there is a constant $\zeta_1\in (0,\infty)$ such that $\wh\mu_\D^0(\|\ell\|_\infty\in[\varepsilon,1))=\lambda|\log\varepsilon|+\zeta_1+o(1)$ as $\varepsilon\to0$.
\end{proposition}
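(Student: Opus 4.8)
The plan is to compare $\wh\mu_\D^0(\|\ell\|_\infty\in[\eps,1))$ with $\mu_\C^0(\|\ell\|_\infty\in[\eps,1)) = \lambda|\log\eps|$ (from Lemma~\ref{lem:c-mass}), and show the difference converges to a finite constant $\zeta_1$ as $\eps\to 0$. The two measures differ only through the Radon--Nikodym weight $\exp(\tfrac{c}{2}\cW(\ell,\S^1))$ in \eqref{eq:hmu}. So the first step is to write
\[
\wh\mu_\D^0(\|\ell\|_\infty\in[\eps,1)) = \mu_\C^0\!\left[\mathbf{1}_{\|\ell\|_\infty\in[\eps,1)}\,\bigl(\exp(\tfrac{c}{2}\cW(\ell,\S^1))-1\bigr)\right] + \mu_\C^0(\|\ell\|_\infty\in[\eps,1)),
\]
so that it suffices to show the first term converges as $\eps\to 0$ to a finite constant
\[
\zeta_1 := \mu_\C^0\!\left[\mathbf{1}_{\ell\subset\D}\,\bigl(\exp(\tfrac{c}{2}\cW(\ell,\S^1))-1\bigr)\right] \in(0,\infty),
\]
and that $\zeta_1$ is indeed finite. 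Note this candidate constant is manifestly positive since the integrand is nonnegative and not identically zero (finitely many loops of $\mu_\C^0$ of each diameter come close to $\S^1$), and monotone convergence handles the passage $\eps\to 0$ once finiteness is established. The only content is therefore the \textbf{finiteness} of $\zeta_1$: bounding $\mu_\C^0[\mathbf{1}_{\ell\subset\D}(\exp(\tfrac c2\cW(\ell,\S^1))-1)]$.

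To prove finiteness, decompose the loops $\ell\subset\D$ surrounding $0$ according to how close they come to $\S^1$, say $1-\|\ell\|_\infty\in[2^{-k-1},2^{-k})$ for $k\ge 1$. For each such dyadic shell, Lemma~\ref{lem:c-mass} (combined with a scaling/covering argument, or directly via finiteness of the $\mu_\C^0$-mass of loops of macroscopic diameter staying in $\D$, which follows from non-triviality of $\mu_\C$ and Proposition~\ref{prop:priori}) gives that the $\mu_\C^0$-mass of loops with $1-\|\ell\|_\infty<2^{-k}$ is bounded. Meanwhile, on the event $1-\|\ell\|_\infty<2^{-k}$, the loop $\ell$ comes within distance $2^{-k}$ of $\S^1$, and by conformal invariance of $\cW$ together with Lemma~\ref{lem:83mass}, $\cW(\ell,\S^1)$ is controlled: mapping $\D$ to itself so that the closest point of $\ell$ to $\S^1$ goes to a fixed location, one bounds $\cW(\ell,\S^1)$ by $\cW(C2^{-k}\D', \S^1)$-type quantities which are $O(2^{-k\alpha})$. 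Since $2^{-k\alpha}\to 0$, the factor $\exp(\tfrac c2\cW(\ell,\S^1))-1 = O(2^{-k\alpha})$ on this shell. Summing a geometric-type series $\sum_k (\text{bounded mass})\cdot O(2^{-k\alpha})$ over $k$, plus the trivially finite contribution of loops bounded away from $\S^1$, gives $\zeta_1<\infty$.

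The main obstacle is the conformal-invariance reduction controlling $\cW(\ell,\partial\D)$ uniformly over all loops $\ell$ with a prescribed distance to $\S^1$: one needs that a loop coming within $2^{-k}$ of the boundary contributes at most $O(2^{-k\alpha})$ to $\cW(\cdot,\S^1)$, uniformly in the shape of $\ell$, not just for round loops. This should follow by pushing $\ell$ through a Möbius transformation of $\D$ that sends a point of $\ell$ nearest to $\S^1$ to, say, $1-2^{-k+1}$, using that $\cW(\ell,\S^1)$ is invariant under such maps and monotone in $\ell$ (larger loop, larger mass of loops hitting both), and then dominating by $\cW$ of a small disk near $\S^1$; the Koebe/Schwarz-type distortion estimates needed here are exactly of the kind already used in Lemmas~\ref{lem:distance}--\ref{lem:boundary}. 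A secondary, more bookkeeping issue is justifying the dyadic mass bound for $\mu_\C^0$ near $\S^1$ from the a priori input of Proposition~\ref{prop:priori} (and Lemma~\ref{lem:c-mass}); this is routine but must be stated carefully because $\mu_\C^0$ is an infinite measure and one is integrating an unbounded weight against it.
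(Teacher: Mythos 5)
Your opening reduction matches the paper's: the difference $\wh\mu_\D^0(\|\ell\|_\infty\in[\eps,1))-\mu_\C^0(\|\ell\|_\infty\in[\eps,1))$ is the integral of $\exp(\tfrac c2\cW(\ell,\S^1))-1$ against $\mu_\C^0$ on $\{\|\ell\|_\infty\in[\eps,1)\}$, it increases as $\eps\to0$, and everything reduces to finiteness of the limit. But your finiteness argument has the key estimate backwards. Lemma~\ref{lem:83mass} controls $\cW(\eps\D,\S^1)$, i.e.\ the mass of Werner loops hitting both a \emph{small disk around the origin} and the far-away circle $\S^1$; it says nothing about $\cW(\ell,\S^1)$ for loops $\ell$ approaching $\S^1$. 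For those loops $\cW(\ell,\S^1)$ does not go to $0$ — it \emph{diverges} as $\dist(\ell,\S^1)\to0$ (already the Brownian loop mass of loops hitting two macroscopic sets at distance $d$ grows like $\log(1/d)$, and $\cW$ inherits this), so on your shells $1-\|\ell\|_\infty\in[2^{-k-1},2^{-k})$ the weight $\exp(\tfrac c2\cW(\ell,\S^1))-1$ blows up rather than being $O(2^{-k\alpha})$; no M\"obius/monotonicity reduction can repair this, since the divergence is genuine. The near-boundary regime is precisely where the real difficulty lies: its finiteness is essentially the statement $\wh\mu_\D^0(\|\ell\|_\infty\in[\tfrac12,1))<\infty$, which the paper gets from the first assertion of Proposition~\ref{prop:priori} (proved via the loop-soup interpolation, Fatou, and the Schwarz-lemma event $\D_\infty\supset\tfrac12\D$), not from any pointwise weight estimate. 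You invoke Proposition~\ref{prop:priori} only to bound the \emph{unweighted} $\mu_\C^0$-mass near $\S^1$, which cannot suffice because the weight is unbounded there.

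Conversely, the regime you dismiss as a ``trivially finite contribution of loops bounded away from $\S^1$'' is where the dyadic estimate is actually needed: those loops have infinite total $\mu_\C^0$-mass ($\lambda|\log\eps|$), and finiteness of their weighted excess requires exactly the observation that $\|\ell\|_\infty\in[\eps,2\eps)$ forces $\ell\subset2\eps\D$, hence $\cW(\ell,\S^1)\le\cW(2\eps\D,\S^1)=O(\eps^\alpha)$ by Lemma~\ref{lem:83mass}, giving an excess $O(\eps^\alpha)$ per shell, summable over dyadic scales. That, combined with $\wh\mu_\D^0(\|\ell\|_\infty\in[\eps,1))<\infty$ from Proposition~\ref{prop:priori}, is the paper's whole proof. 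So your identification of $\zeta_1$ and the monotone-convergence structure are fine, but the two regimes are swapped in your estimates, and as written the proposal does not establish finiteness of $\zeta_1$.
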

\begin{proof}
	By Proposition~\ref{prop:priori}, $\wh\mu_\D^0(\|\ell\|_\infty\in[\varepsilon,1))<\infty$.
	By \eqref{eq:hmu}, we have 
	\begin{align*}
		0\le\wh\mu_\D^0(\|\ell\|_\infty\in[\varepsilon,2\eps))\!-\!\mu_\C^0(\|\ell\|_\infty\in[\varepsilon,2\eps)) \le \mu_\C^0(\|\ell\|_\infty\in[\varepsilon,2\eps))[\exp(\frac{c}{2}\cW(2\eps\D,\S^1))\!-\!1]
	\end{align*}
    which is $O(\eps^\alpha)$ according to Lemmas~\ref{lem:c-mass} and~\ref{lem:83mass}.
	Hence, $\wh\mu_\D^0(\|\ell\|_\infty\in[\varepsilon,1))-\mu_\C^0(\|\ell\|_\infty\in[\varepsilon,1))$ converges to some $\zeta_1\in(0,\infty)$ as $\varepsilon\to0$.
\end{proof}

\begin{corollary}\label{cor:83mass-scale}
	Let $\beta\in[0,\frac{1}{2}]$. We have
	\begin{equation*}
		\wh\mu_\D^0(\|\ell\|_\infty\in[\varepsilon,a\varepsilon))=\lambda\log a+o(1) \quad \text{as} \quad \eps\to 0,
	\end{equation*}
	uniformly for all $1\le a\le \varepsilon^{-\beta}$, where $o(1)$ is independent of $\beta$.
\end{corollary}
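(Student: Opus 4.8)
The plan is to reduce the statement for $\wh\mu_\D^0$ to the exact scaling identity for $\mu_\C^0$ from Lemma~\ref{lem:c-mass}, controlling the discrepancy between the two measures through the Radon--Nikodym relation~\eqref{eq:hmu} and the a priori bound of Lemma~\ref{lem:83mass}. First note that since $\beta\le\frac12$ and $0<\eps<\frac12$, we have $a\eps\le \eps^{1-\beta}\le\eps^{1/2}<1$ for all $1\le a\le\eps^{-\beta}$, so every loop $\ell$ with $\|\ell\|_\infty\in[\eps,a\eps)$ is contained in $\D$, and in fact in $a\eps\D$; moreover $\wh\mu_\D^0(\|\ell\|_\infty\in[\eps,a\eps))\le\wh\mu_\D^0(\|\ell\|_\infty\in[\eps,1))<\infty$ by Proposition~\ref{prop:dc}. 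By Lemma~\ref{lem:c-mass}, $\mu_\C^0(\|\ell\|_\infty\in[\eps,a\eps))=\lambda\log a$ exactly, so it suffices to show $\wh\mu_\D^0(\|\ell\|_\infty\in[\eps,a\eps))-\lambda\log a=o(1)$ uniformly.

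Next I would insert the density~\eqref{eq:hmu} with $D=\D$, $\partial D=\S^1$: for loops surrounding the origin and contained in $\D$,
\[
\wh\mu_\D^0(\|\ell\|_\infty\in[\eps,a\eps))=\int_{\{\|\ell\|_\infty\in[\eps,a\eps)\}}\exp\Big(\tfrac c2\cW(\ell,\S^1)\Big)\,\mu_\C^0(d\ell).
\]
If $\|\ell\|_\infty<a\eps$ then $\ell\subset a\eps\D$, hence any loop in $\cW$ meeting both $\ell$ and $\S^1$ also meets $a\eps\D$, so $\cW(\ell,\S^1)\le\cW(a\eps\D,\S^1)$. For $\eps$ small enough that $a\eps<\frac12$, Lemma~\ref{lem:83mass} (and scaling of $\cW$) gives $\cW(a\eps\D,\S^1)=O((a\eps)^\alpha)=O(\eps^{\alpha/2})$, where I used $a\eps\le\eps^{1/2}$ and the implied constant depends only on $\alpha,c$. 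Therefore $1\le\exp(\frac c2\cW(\ell,\S^1))\le 1+O(\eps^{\alpha/2})$ uniformly over the relevant loops, and combining with the previous display,
\[
0\le \wh\mu_\D^0(\|\ell\|_\infty\in[\eps,a\eps))-\lambda\log a\le O(\eps^{\alpha/2})\cdot\lambda\log a .
\]

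Finally, to close the uniformity I would use the constraint $a\le\eps^{-\beta}$ once more: $\log a\le\beta|\log\eps|\le\frac12|\log\eps|$, so the right-hand side above is $O(\eps^{\alpha/2}|\log\eps|)=o(1)$ as $\eps\to0$, with a bound $C\eps^{\alpha/2}|\log\eps|$ whose constant $C$ does not depend on $\beta$. This yields $\wh\mu_\D^0(\|\ell\|_\infty\in[\eps,a\eps))=\lambda\log a+o(1)$ uniformly in $1\le a\le\eps^{-\beta}$, as claimed. (An alternative route is to write the mass as a difference of two instances of Proposition~\ref{prop:dc} evaluated at $\eps$ and at $a\eps$; this works too, but then one must argue that the error term in that proposition is uniform over the moving lower endpoint $a\eps\le\eps^{1/2}\to0$, which is slightly less transparent than the direct comparison above.) There is no genuine obstacle here: the only point requiring care is tracking that the $o(1)$ is uniform and $\beta$-free, which the bounds $a\eps\le\eps^{1/2}$ and $\log a\le\tfrac12|\log\eps|$ handle automatically.
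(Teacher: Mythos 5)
Your argument is correct. You bound the annulus mass directly by comparing $\wh\mu_\D^0$ with $\mu_\C^0$ through the density~\eqref{eq:hmu}: on $\{\|\ell\|_\infty\in[\eps,a\eps)\}$ every loop lies in $a\eps\D\subset\eps^{1/2}\D$, so $1\le\exp(\frac c2\cW(\ell,\S^1))\le 1+O(\eps^{\alpha/2})$ by Lemma~\ref{lem:83mass}, and Lemma~\ref{lem:c-mass} gives the exact value $\lambda\log a$ for $\mu_\C^0$; the constraint $\log a\le\frac12|\log\eps|$ then kills the multiplicative error. All steps check out, and the resulting error $O(\eps^{\alpha/2}|\log\eps|)$ is uniform in $a\le\eps^{-\beta}$ and $\beta$-free, as required. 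The paper instead proves the corollary in one line by the route you mention only as an alternative: write the mass as $\wh\mu_\D^0(\|\ell\|_\infty\in[\eps,1))-\wh\mu_\D^0(\|\ell\|_\infty\in[a\eps,1))$ and apply Proposition~\ref{prop:dc} twice; the uniformity you were slightly worried about there is automatic, because the $o(1)$ in Proposition~\ref{prop:dc} depends only on the lower endpoint of the interval and $a\eps\le\eps^{1/2}\to0$ uniformly over the allowed range of $a$. The two arguments are close in spirit (your comparison is exactly the mechanism inside the proof of Proposition~\ref{prop:dc}, localized to the annulus $[\eps,a\eps)$), but yours bypasses Proposition~\ref{prop:dc} and its constant $\zeta_1$ entirely and yields a quantitative rate, while the paper's telescoping is shorter given that Proposition~\ref{prop:dc} is already established.
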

\begin{proof}
	Write $\wh\mu_\D^0(\|\ell\|_\infty\in[\varepsilon,a\varepsilon))=\wh\mu_\D^0(\|\ell\|_\infty\in[\varepsilon,1))-\wh\mu_\D^0(\|\ell\|_\infty\in[a\varepsilon,1))$. The result then follows from Proposition~\ref{prop:dc}.
\end{proof}

\subsection{The proof of Theorems~\ref{thm:loop-unique} and~\ref{thm:intersect}}\label{subsec:hitting}

This section is dedicated to the proof of Theorem~\ref{thm:intersect}, which implies the uniqueness of the MKS measure (Theorem~\ref{thm:loop-unique}) immediately.
By scaling invariance of MKS measures, it suffices to prove Theorem~\ref{thm:intersect} for simply connected domains $U,D$ such that $\{0\}\subset U\subset D\subset\D$. Therefore, we will assume $D\subset\D$ throughout this section.

We first define the following important quantity $I(D)$ whenever the limit exists (which is the case as we will see immediately)
\begin{equation}\label{eq:ID}
	I(D):=\lim_{\varepsilon\to0}\big(\wh\mu_\D^0[\ell\not\subset \varepsilon\D]-\mu_\C^0[\ell\not\subset\varepsilon\D,\ell\subset D]\big).
\end{equation}
Note in particular that $I(\D)=\zeta_1$ according to Proposition~\ref{prop:dc}.
We will show that $I(D)$ has a nice loop-soup expression (see Proposition~\ref{prop:elog0}), which is the key to Theorem~\ref{thm:intersect}. 
Before going any further, we first show that the limit in \eqref{eq:ID} exists by
establishing an equality between $I(D)$ and the $\mu_\C^0$-mass of loops in $\D$ that are not fully in $D$.

\begin{lemma}\label{lem:ID1}
	The limit in~\eqref{eq:ID} exists and $I(D)=\mu_\C^0[\ell\not\subset D,\ell\subset \D]+\zeta_1\in (0,\infty)$.
\end{lemma}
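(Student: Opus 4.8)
\textbf{Proof proposal for Lemma~\ref{lem:ID1}.}

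The plan is to split the ``not fully in $D$'' condition for a loop $\ell$ surrounding the origin into the two pieces measured in \eqref{eq:ID}. Fix $\varepsilon\in(0,1)$ small. Every loop $\ell$ counted by $\wh\mu_\D^0[\ell\not\subset\varepsilon\D]$ is a loop in $\D$ surrounding $0$ with $\|\ell\|_\infty\ge\varepsilon$; among these I distinguish whether $\ell\subset D$ or not. This gives the exact decomposition
\[
\wh\mu_\D^0[\ell\not\subset\varepsilon\D]=\wh\mu_\D^0[\ell\not\subset\varepsilon\D,\ \ell\subset D]+\wh\mu_\D^0[\ell\not\subset\varepsilon\D,\ \ell\not\subset D].
\]
For the second term on the right, note that the event $\{\ell\not\subset D\}$ already forces $\|\ell\|_\infty$ to be bounded below by $\dist(0,\partial D)>0$ (since $\ell$ surrounds $0$ and $D$ is open and simply connected containing $0$), so for $\varepsilon$ smaller than this distance the term $\{\ell\not\subset\varepsilon\D\}$ is automatic and $\wh\mu_\D^0[\ell\not\subset\varepsilon\D,\ \ell\not\subset D]=\wh\mu_\D^0[\ell\subset\D,\ \ell\not\subset D]$, a fixed finite number (finite by Proposition~\ref{prop:priori}, or by the non-triviality of $\mu_\C$ together with \eqref{eq:hmu} and Lemma~\ref{lem:83mass}). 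So it remains to compare $\wh\mu_\D^0[\ell\not\subset\varepsilon\D,\ \ell\subset D]$ with $\mu_\C^0[\ell\not\subset\varepsilon\D,\ \ell\subset D]$ as $\varepsilon\to0$.

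The comparison of these last two quantities is where the Radon–Nikodym weight $\exp(\frac c2\mathcal{W}(\ell,\partial D))$ from \eqref{eq:hmu} must be controlled. Writing $\wh\mu_\D^0[\ell\not\subset\varepsilon\D,\ \ell\subset D]-\mu_\C^0[\ell\not\subset\varepsilon\D,\ \ell\subset D]=\mu_\C^0\big[{\bf 1}_{\ell\not\subset\varepsilon\D,\ \ell\subset D}\,(\exp(\tfrac c2\mathcal{W}(\ell,\partial D))-1)\big]$, I would split the domain of integration according to whether $\ell$ comes within distance $\varepsilon^{1/2}$ of $\partial D$ or not (any power between $0$ and $1$ works). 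On the bulk part, where $\dist(\ell,\partial D)\ge\varepsilon^{1/2}$, the quantity $\mathcal{W}(\ell,\partial D)$ is uniformly small as $\varepsilon\to0$ by a Beurling-type estimate combined with Lemma~\ref{lem:83mass} applied after a conformal map (or directly by the decay of the Brownian loop measure of loops crossing an annular region of modulus $\to\infty$), so $\exp(\tfrac c2\mathcal{W})-1=o(1)$ there, and the total $\mu_\C^0$-mass of that part is $O(\lambda|\log\varepsilon|)$ by Lemma~\ref{lem:c-mass}; the product is still $o(1)$ if one chooses the cutoff more carefully — in fact one should cut at $\dist(\ell,\partial D)\ge \delta$ for a fixed small $\delta$, let $\varepsilon\to0$ first and then $\delta\to0$, so that on the bulk part $\mathcal{W}(\ell,\partial D)\le C(\delta)\to0$ while the mass is finite $+O(|\log\varepsilon|)$ but multiplied by $C(\delta)$... this does not quite close. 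The clean route, which I would adopt, is instead: on the near-boundary part $\{\dist(\ell,\partial D)<\delta\}$, the loop $\ell$ has $\|\ell\|_\infty$ bounded below by a constant (it must still surround $0$), so this part has finite $\mu_\C^0$-mass uniformly in $\varepsilon$, and its $\wh\mu$-counterpart is finite too (Proposition~\ref{prop:priori}); on the bulk part $\{\dist(\ell,\partial D)\ge\delta\}$, $\mathcal{W}(\ell,\partial D)$ is bounded, and the integrand difference is dominated by an integrable function, so dominated convergence applies as $\varepsilon\to0$ and the near-boundary contributions are independent of $\varepsilon$. Consequently the limit defining $I(D)$ exists and equals $\lim_{\varepsilon\to0}\wh\mu_\D^0[\ell\not\subset\varepsilon\D,\ \ell\not\subset D]+\lim_{\varepsilon\to0}\big(\wh\mu_\D^0[\ell\not\subset\varepsilon\D,\ \ell\subset D]-\mu_\C^0[\ell\not\subset\varepsilon\D,\ \ell\subset D]\big)$; the first limit is $\wh\mu_\D^0[\ell\subset\D,\ \ell\not\subset D]=\mu_\C^0[\ell\subset\D,\ \ell\not\subset D]$ (the weight is $1$ since $\mathcal{W}(\ell,\partial\D)=0$ when $\ell\subset\D$... wait, $\ell\subset\D$ does not make $\mathcal{W}(\ell,\partial\D)=0$) — more precisely the first limit is $\wh\mu_\D^0[\ell\subset\D,\ \ell\not\subset D]$, which I will identify with $\mu_\C^0[\ell\not\subset D,\ \ell\subset\D]$ only after noting that the set $\{\ell\subset\D,\ \ell\not\subset D\}$ does \emph{not} touch $\partial\D$, so on it $\mathcal{W}(\ell,\partial\D)$ is genuinely relevant and the two measures differ; hence the correct statement, consistent with $I(\D)=\zeta_1$, is obtained by taking $D=\D$ as a sanity check, where $\{\ell\subset\D,\ \ell\not\subset\D\}=\emptyset$ and $I(\D)=\lim_\varepsilon(\wh\mu_\D^0[\ell\not\subset\varepsilon\D]-\mu_\C^0[\ell\not\subset\varepsilon\D,\ \ell\subset\D])=\zeta_1$ by Proposition~\ref{prop:dc}. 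For general $D$, combining the two pieces and using Proposition~\ref{prop:dc} for the $\wh\mu$–$\mu_\C$ difference restricted to $\ell\subset D$ (which, after the conformal map $f_{D,\infty}$-free argument, reduces to the $\D$ case up to the same $\zeta_1$), yields $I(D)=\mu_\C^0[\ell\not\subset D,\ \ell\subset\D]+\zeta_1$.

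Finally, positivity and finiteness: $I(D)\ge\zeta_1>0$ since $\mu_\C^0[\ell\not\subset D,\ \ell\subset\D]\ge0$, and $I(D)<\infty$ because $\mu_\C^0[\ell\not\subset D,\ \ell\subset\D]\le\mu_\C^0[\ell\not\subset D,\ \ell\subset\D,\ \|\ell\|_\infty\ge\dist(0,\partial D)]<\infty$ by non-triviality of $\mu_\C$ (the $\mu_\C$-mass of loops contained in $\D$ of diameter bounded below is finite). The main obstacle is the uniform control of $\mathcal{W}(\ell,\partial D)$ for loops approaching $\partial D$ and the interchange of limits $\varepsilon\to0$ and the near/far-boundary decomposition; I expect this to be handled by Lemma~\ref{lem:83mass} (after conformally mapping a neighbourhood of a boundary point to a half-disk) together with the finiteness statements from Proposition~\ref{prop:priori}, and the rest is bookkeeping matching the additive constant to $\zeta_1$ via the $D=\D$ case.
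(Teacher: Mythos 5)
The final formula and the splitting idea are roughly right, but the execution has a real error and the paper's own argument is much shorter. In \eqref{eq:hmu} the Radon--Nikodym weight defining $\wh\mu_\D$ is $\exp(\frac c2\cW(\ell,\partial\D))$, \emph{not} $\exp(\frac c2\cW(\ell,\partial D))$; you write the latter and then proceed to analyze $\cW(\ell,\partial D)$ (distance-to-$\partial D$ estimates, near-boundary vs.\ bulk split, conformal maps to half-disks), which is the wrong quantity and explains why you could not ``close'' the argument. If you correct this to $\cW(\ell,\partial\D)$, the bulk/near-boundary decomposition is not needed at all: since $\exp(\frac c2\cW(\ell,\partial\D))-1\ge0$, the monotone convergence theorem alone gives $\lim_{\varepsilon\to0}\mu_\C^0\big[{\bf 1}_{\ell\not\subset\varepsilon\D,\ell\subset D}(\exp(\frac c2\cW(\ell,\partial\D))-1)\big]=\mu_\C^0\big[{\bf 1}_{\ell\subset D}(\exp(\frac c2\cW(\ell,\partial\D))-1)\big]$, and an identical identity with $D$ replaced by $\D$ is exactly the content of Proposition~\ref{prop:dc} (so the latter equals $\zeta_1$); adding $\wh\mu_\D^0[\ell\subset\D,\ell\not\subset D]$, the weights telescope and give $\mu_\C^0[\ell\not\subset D,\ell\subset\D]+\zeta_1$. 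But you never actually carry out this last identification — you gesture at Proposition~\ref{prop:dc} ``for the $\wh\mu$--$\mu_\C$ difference restricted to $\ell\subset D$,'' which is not literally what that proposition says.

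The paper's proof avoids all of this by organizing the telescoping on the $\mu_\C^0$ side rather than the $\wh\mu_\D^0$ side. It writes $\mu_\C^0[\ell\not\subset D,\ell\subset\D]=\lim_{\varepsilon\to0}\big(\mu_\C^0[\ell\not\subset\varepsilon\D,\ell\subset\D]-\mu_\C^0[\ell\not\subset\varepsilon\D,\ell\subset D]\big)$ by monotone convergence, then substitutes $\mu_\C^0[\ell\not\subset\varepsilon\D,\ell\subset\D]=\wh\mu_\D^0[\ell\not\subset\varepsilon\D]-\zeta_1-o(1)$ from Proposition~\ref{prop:dc} directly, and the definition of $I(D)$ pops out. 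No RN-weight estimates, no splitting of the domain, no interchange-of-limits lemma required. I recommend you adopt that ordering: compare to the $\D$-case first via Proposition~\ref{prop:dc}, then isolate $D$ by a single subtraction, rather than trying to estimate $\cW(\ell,\cdot)$ for loops near $\partial D$.
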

\begin{proof}
	Since $D\subset\D$, we have
	\begin{align*}
		\mu_\C^0[\ell\not\subset D,\ell\subset \D]&=\lim_{\varepsilon\to0}\big(\mu_\C^0[\ell\not\subset \varepsilon\D,\ell\subset \D]-\mu_\C^0[\ell\not\subset\varepsilon\D,\ell\subset D]\big)\\
		&=\lim_{\varepsilon\to0}\big(\wh\mu_\D^0[\ell\not\subset \varepsilon\D]-\zeta_1-\mu_\C^0[\ell\not\subset\varepsilon\D,\ell\subset D]\big),
	\end{align*}
	where the second line is from Proposition~\ref{prop:dc}.
	Hence $I(D)=\mu_\C^0[\ell\not\subset D,\ell\subset \D]+\zeta_1$. Note that $\mu_\C^0[\ell\not\subset D,\ell\subset \D]\in (0,\infty)$ thanks to the non-triviality of $\mu_\C$.
\end{proof}

The following proposition provides a clean form of $I(D)$ that only involves randomness from the loop soup.

\begin{proposition}\label{prop:elog0}
	We have $I(D)=\lambda\bE\log f_{D,\infty}'(0)$. In particular, $\zeta_1=I(\D)=\lambda\bE\log f_{\D,\infty}'(0)$.
\end{proposition}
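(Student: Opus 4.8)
The plan is to compute the limit $I(D)=\lim_{\varepsilon\to0}\big(\wh\mu_\D^0[\ell\not\subset\varepsilon\D]-\mu_\C^0[\ell\not\subset\varepsilon\D,\ell\subset D]\big)$ by exploiting the recursive identity of Proposition~\ref{prop:priori}, namely $\mu_\C^0[\ell\not\subset\varepsilon\D,\ell\subset D]=\bE\otimes\wh\mu_\D^0[\ell\not\subset f_{D,\infty}(\varepsilon\D)]$. Subtracting this from $\wh\mu_\D^0[\ell\not\subset\varepsilon\D]$ and bringing the latter inside $\bE$ (which is harmless since it is a constant), we get
\[
\wh\mu_\D^0[\ell\not\subset\varepsilon\D]-\mu_\C^0[\ell\not\subset\varepsilon\D,\ell\subset D]=\bE\Big[\wh\mu_\D^0[\ell\not\subset\varepsilon\D]-\wh\mu_\D^0[\ell\not\subset f_{D,\infty}(\varepsilon\D)]\Big].
\]
The idea is that $f_{D,\infty}$ fixes $0$ with $f_{D,\infty}'(0)>0$, so $f_{D,\infty}(\varepsilon\D)$ is, to first order, the disk of radius $f_{D,\infty}'(0)\varepsilon$; more precisely by the Koebe distortion theorem $f_{D,\infty}(\varepsilon\D)$ is sandwiched between disks of radii comparable to $f_{D,\infty}'(0)\varepsilon$ with multiplicative error $1+O(\varepsilon)$. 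Hence the bracketed quantity should converge to $\lambda\log f_{D,\infty}'(0)$ by the scaling relation of Corollary~\ref{cor:83mass-scale} (or Proposition~\ref{prop:dc}): roughly, $\wh\mu_\D^0[\ell\not\subset\varepsilon\D]-\wh\mu_\D^0[\ell\not\subset r\D]=\wh\mu_\D^0(\|\ell\|_\infty\in[\varepsilon,r))=\lambda\log(r/\varepsilon)+o(1)$, and with $r$ replaced by the roughly-radius-$f_{D,\infty}'(0)\varepsilon$ region $f_{D,\infty}(\varepsilon\D)$, this gives $\lambda\log f_{D,\infty}'(0)+o(1)$.

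First I would make the sandwiching precise: fix a realization of the loop soup, so $f_{D,\infty}$ is a fixed conformal map of $D_\infty$ (containing $0$) into $\C$ with $f_{D,\infty}'(0)>0$. Writing $a:=f_{D,\infty}'(0)$ and using Koebe distortion on the restriction of $f_{D,\infty}$ to $\varepsilon\D$ for small $\varepsilon$ (legitimate once $\varepsilon\D\subset D_\infty$), obtain $a\varepsilon(1-C\varepsilon)\le \dist(0,\partial f_{D,\infty}(\varepsilon\D))$ and $f_{D,\infty}(\varepsilon\D)\subset a\varepsilon(1+C\varepsilon)\D$ for a universal $C$; consequently $\wh\mu_\D^0[\ell\not\subset a\varepsilon(1+C\varepsilon)\D]\le\wh\mu_\D^0[\ell\not\subset f_{D,\infty}(\varepsilon\D)]\le \wh\mu_\D^0[\ell\not\subset a\varepsilon(1-C\varepsilon)\D]$, provided $a\varepsilon(1+C\varepsilon)<1$ (true for small $\varepsilon$ since $a<1$ by the Schwarz lemma applied to $f_{D,\infty}^{-1}$). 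Then by Proposition~\ref{prop:dc},
\[
\wh\mu_\D^0[\ell\not\subset\varepsilon\D]-\wh\mu_\D^0[\ell\not\subset a\varepsilon(1\pm C\varepsilon)\D]=\lambda\log\!\big(a(1\pm C\varepsilon)\big)+o(1)\longrightarrow\lambda\log a,
\]
so the integrand converges pointwise (in the loop-soup realization) to $\lambda\log f_{D,\infty}'(0)$.

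Second, to pass the limit through $\bE$ I need a dominated-convergence argument. The integrand $g_\varepsilon:=\wh\mu_\D^0[\ell\not\subset\varepsilon\D]-\wh\mu_\D^0[\ell\not\subset f_{D,\infty}(\varepsilon\D)]$ is bounded above in absolute value by $\wh\mu_\D^0(\|\ell\|_\infty\in[\varepsilon\wedge a\varepsilon, \varepsilon\vee a\varepsilon)\cdot(1+C\varepsilon))=\lambda|\log a|+o(1)$ once $\varepsilon$ is small, uniformly as long as $a$ stays in a fixed compact subinterval of $(0,\infty)$ — but $a=f_{D,\infty}'(0)$ can be arbitrarily small as the loop soup cuts $D_\infty$ down, so I need the uniformity in Corollary~\ref{cor:83mass-scale}: for $1\le a^{-1}\le\varepsilon^{-\beta}$ (equivalently $a\ge\varepsilon^{\beta}$) the error is $o(1)$ uniformly. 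This means I must control $\bE[|\log f_{D,\infty}'(0)|\,;\,f_{D,\infty}'(0)<\varepsilon^{\beta}]$ and show it is small; this is the place where a tail estimate on $\bE\log f_{D,\infty}'(0)$ — i.e. on how strongly the $\SLE_{8/3}$ loop soup can pinch the origin — is genuinely needed, and I expect this integrability of $\log f_{D,\infty}'(0)$ (and hence finiteness of $I(D)$, already known from Lemma~\ref{lem:ID1}) to be the main obstacle, handled by combining Lemma~\ref{lem:ID1} (which gives $I(D)<\infty$, hence the limit is finite) with an a priori bound showing the contribution of the small-derivative event to $g_\varepsilon$ vanishes, perhaps by a crude estimate $|g_\varepsilon|\le\wh\mu_\D^0[\ell\not\subset f_{D,\infty}(\varepsilon\D)]\le\wh\mu_\D^0[\ell\not\subset D_\infty^{(\varepsilon)}]$-type bound plus monotone/dominated convergence driven by the fact that $f_{D,\infty}(\varepsilon\D)\downarrow\{0\}$. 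Once the limit exchange is justified, $I(D)=\bE[\lambda\log f_{D,\infty}'(0)]=\lambda\bE\log f_{D,\infty}'(0)$, and specializing $D=\D$ gives $\zeta_1=I(\D)=\lambda\bE\log f_{\D,\infty}'(0)$, completing the proof.
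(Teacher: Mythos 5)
Your overall route is the paper's: rewrite the difference via the recursion of Proposition~\ref{prop:priori} as $\bE\otimes\wh\mu_\D^0[\ell\subset f_{D,\infty}(\varepsilon\D)\setminus(\varepsilon\D)]$ (this is the paper's Lemma~\ref{lem:Ils}), compare $f_{D,\infty}(\varepsilon\D)$ with a disk of radius $\approx f_{D,\infty}'(0)\varepsilon$ by a distortion estimate, and invoke the scaling asymptotics of $\wh\mu_\D^0$. However, there is a concrete error that derails the one step that actually carries the difficulty. Since $D_\infty\subset D\subset\D$ and $f_{D,\infty}$ maps $D_\infty$ \emph{onto} $\D$ fixing $0$, the Schwarz lemma applied to $f_{D,\infty}^{-1}$ gives $f_{D,\infty}'(0)\ge 1$, not $<1$; when the loop soup cuts $D_\infty$ down, the derivative becomes \emph{large}, not small. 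Consequently the bad event you propose to control, $\{f_{D,\infty}'(0)<\varepsilon^{\beta}\}$, is empty, and the genuine obstruction is the opposite tail $\{f_{D,\infty}'(0)\ge\varepsilon^{-\alpha_0}\}$. On that event your Koebe sandwich with multiplicative error $1+O(\varepsilon)$ is not uniform (the paper's Lemma~\ref{lem:distortion} only gives error $O(\varepsilon^{1-\beta})$, and only under the hypothesis $f'(0)<\varepsilon^{-\beta}$), and the integrand $g_\varepsilon$ can be as large as $\wh\mu_\D^0(\|\ell\|_\infty\in[\varepsilon,1))=\lambda|\log\varepsilon|+O(1)$, so the exchange of $\lim_{\varepsilon\to0}$ with $\bE$ is exactly what must be proved and your ``crude estimate plus monotone/dominated convergence'' does not supply it. Note also that dominated convergence with your envelope $\lambda\log f_{D,\infty}'(0)+o(1)$ presupposes $\bE\log f_{D,\infty}'(0)<\infty$, which is not known a priori.

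The paper closes precisely this gap in two steps, which your sketch would need to reproduce (with the corrected direction of the truncation): first, on the good event $\{f_{D,\infty}'(0)<\varepsilon^{-\alpha_0}\}$ it proves the truncated limit equals $\lambda\,\bE\log f_{D,\infty}'(0)$ (Lemma~\ref{lem:IDs}, using Lemma~\ref{lem:distortion} and the uniformity in Corollary~\ref{cor:83mass-scale}); combined with $I(D)<\infty$ from Lemma~\ref{lem:ID1} this yields the needed integrability $\lambda\,\bE\log f_{D,\infty}'(0)\le I(D)<\infty$. Second, the contribution of the bad event is bounded by $(\lambda|\log\varepsilon|+\zeta_1+o(1))\,\bP[f_{D,\infty}'(0)\ge\varepsilon^{-\alpha_0}]$ via Proposition~\ref{prop:dc}, and this vanishes because of the Markov-type bound $|\log\varepsilon|\,\bP[f_{D,\infty}'(0)\ge\varepsilon^{-\alpha_0}]\le\frac{1}{\alpha_0}\bE[\log f_{D,\infty}'(0);\ f_{D,\infty}'(0)>\varepsilon^{-\alpha_0}]\to0$, which uses the integrability just established. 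Your pointwise convergence argument is fine (for a fixed realization the wrong justification ``$a<1$'' is harmless, since $a$ is a finite constant), but without the tail analysis above the proof is incomplete.
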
 

We first show how to derive Theorem~\ref{thm:intersect} (and hence Theorem~\ref{thm:loop-unique}) from Proposition~\ref{prop:elog0}.

\begin{proof}[Proof of Theorem~\ref{thm:intersect}, assuming Proposition~\ref{prop:elog0}]
	Write
	$\mu_\C^0[\ell\not\subset U,\ell\subset D]= \mu_\C^0[\ell\not\subset U,\ell\subset \D] - \mu_\C^0[\ell\not\subset D,\ell\subset \D]$. We finish the proof by applying  Lemma~\ref{lem:ID1} and Proposition~\ref{prop:elog0}. 
\end{proof}
\begin{proof}[Proof of Theorem~\ref{thm:loop-unique}]
	From Theorem~\ref{thm:intersect}, we have determined $\mu_\C^0[\ell \not\subset U,\ell \subset D]$ for all simply connected domains $U$ and $D$ satisfying $U \subset D$. This indeed characterizes $\mu_\C$ by a rather straightforward measure-theoretical argument, as presented in~\cite[Page 146]{werner2008conformally}.
\end{proof}

In the remaining of this section, we aim to show Proposition~\ref{prop:elog0}. Our proof relies crucially on the following loop-soup interpolation of $I(D)$.

\begin{lemma}\label{lem:Ils}
	We have
	\begin{equation}\label{eq:ID-rec}
		I(D)=\lim_{\varepsilon\to0}\bE\otimes\wh\mu_\D^0[\ell\subset f_{D,\infty}(\varepsilon\D)\setminus(\varepsilon\D)].
	\end{equation}
\end{lemma}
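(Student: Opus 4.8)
\textbf{Proof proposal for Lemma~\ref{lem:Ils}.}

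The plan is to revisit the recursive identity \eqref{eq:recursion-loop} from Proposition~\ref{prop:priori} and take a limit as the reference radius $\varepsilon \to 0$, after subtracting the common divergent term $\wh\mu_\D^0[\ell\not\subset\varepsilon\D]$ from both sides. Concretely, applying \eqref{eq:recursion-loop} with the domain $D$ and radius $r=\varepsilon$ gives $\mu_\C^0[\ell\not\subset\varepsilon\D,\ell\subset D]=\bE\otimes\wh\mu_\D^0[\ell\not\subset f_{D,\infty}(\varepsilon\D)]$. Hence, by the definition \eqref{eq:ID} of $I(D)$,
\[
I(D)=\lim_{\varepsilon\to0}\bE\otimes\wh\mu_\D^0\big[\ell\not\subset\varepsilon\D\big]-\bE\otimes\wh\mu_\D^0\big[\ell\not\subset f_{D,\infty}(\varepsilon\D)\big],
\]
provided we can bring the (deterministic) first term inside the loop-soup expectation — which is harmless since it does not depend on the loop soup. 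Since $D\subset\D$ forces $\varepsilon\D\subset f_{D,\infty}(\varepsilon\D)$ by the Schwarz lemma (as already used in the proof of Proposition~\ref{prop:priori}), the difference of the two indicator-integrals telescopes into $\wh\mu_\D^0[\ell\subset f_{D,\infty}(\varepsilon\D)\setminus\varepsilon\D]$ pointwise in the loop-soup configuration, giving exactly the right-hand side of \eqref{eq:ID-rec} under $\bE\otimes\wh\mu_\D^0$.

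The one genuine subtlety is justifying the interchange of the limit $\varepsilon\to0$ with the loop-soup expectation $\bE$, i.e.\ that $\lim_{\varepsilon\to0}$ and $\bE$ commute for the quantity $\wh\mu_\D^0[\ell\subset f_{D,\infty}(\varepsilon\D)\setminus\varepsilon\D]$. The key input here should be the a priori estimate of Corollary~\ref{cor:83mass-scale}: conditionally on the loop soup $\mathcal{L}_\C^\cW$, the conformal radius $f_{D,\infty}'(0)$ is some a.s.\ finite random constant $\geq 1$, so $f_{D,\infty}(\varepsilon\D)$ is squeezed between $\varepsilon\D$ and $C_{D,\infty}\,\varepsilon\D$ for $C_{D,\infty}$ a fixed multiple of $f_{D,\infty}'(0)$ (via the Koebe distortion theorem), and therefore $\wh\mu_\D^0[\ell\subset f_{D,\infty}(\varepsilon\D)\setminus\varepsilon\D]\le\wh\mu_\D^0(\|\ell\|_\infty\in[\varepsilon,C_{D,\infty}\varepsilon))$. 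Corollary~\ref{cor:83mass-scale} then shows this is at most $\lambda\log C_{D,\infty}+o(1)$ as $\varepsilon\to0$, hence bounded by an $\mathcal{L}_\C^\cW$-integrable dominating function (one needs $\bE[\log f_{D,\infty}'(0)]<\infty$, which will follow from the same finiteness $I(D)<\infty$ established in Lemma~\ref{lem:ID1}, or can be checked directly), and moreover converges $\bE$-a.s.\ to its integrand limit. Dominated convergence then yields \eqref{eq:ID-rec}.

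I expect the main obstacle to be precisely this domination/integrability bookkeeping: one must make sure that the random bound $C_{D,\infty}$ coming from Koebe distortion is $\log$-integrable against the loop soup, and that the $o(1)$ in Corollary~\ref{cor:83mass-scale} is uniform enough in the (random) scale $a=C_{D,\infty}$ to survive taking $\bE$ — this is why the corollary was stated with $o(1)$ uniform over $1\le a\le\varepsilon^{-\beta}$, so one also needs the tail estimate $\bP[f_{D,\infty}'(0)>\varepsilon^{-\beta}]$ to be negligible, or to truncate the loop soup on the event $\{f_{D,\infty}'(0)\le\varepsilon^{-\beta}\}$ and handle the complement separately using $\log$-integrability. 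Once the limit is inside, the pointwise telescoping is immediate and the lemma follows; the subsequent identification of this limit with $\lambda\,\bE\log f_{D,\infty}'(0)$ (Proposition~\ref{prop:elog0}) is then a direct application of Corollary~\ref{cor:83mass-scale} in the form $\wh\mu_\D^0[\ell\subset f_{D,\infty}(\varepsilon\D)\setminus\varepsilon\D]\to\lambda\log f_{D,\infty}'(0)$ a.s., combined with the same dominated convergence.
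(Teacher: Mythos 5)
Your first two paragraphs reproduce exactly the paper's proof: apply Proposition~\ref{prop:priori} with $r=\varepsilon$, subtract the (deterministic) quantity $\wh\mu_\D^0[\ell\not\subset\varepsilon\D]$, and use Schwarz plus the telescoping of indicators to get the exact identity $\wh\mu_\D^0[\ell\not\subset\varepsilon\D]-\mu_\C^0[\ell\not\subset\varepsilon\D,\ell\subset D]=\bE\otimes\wh\mu_\D^0[\ell\subset f_{D,\infty}(\varepsilon\D)\setminus\varepsilon\D]$ for every fixed $\varepsilon$, then take $\varepsilon\to 0$ and use the definition \eqref{eq:ID} of $I(D)$ (whose limit exists by Lemma~\ref{lem:ID1}). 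However, the third paragraph is a red herring: Lemma~\ref{lem:Ils} requires no interchange of $\lim_{\varepsilon\to0}$ with $\bE$ at all, since in \eqref{eq:ID-rec} the expectation $\bE$ sits \emph{inside} the limit, exactly as in the identity you already derived for each $\varepsilon$; the domination/truncation argument you sketch is precisely what is needed later for Lemma~\ref{lem:IDs} and Proposition~\ref{prop:elog0} (to identify the limit with $\lambda\,\bE\log f_{D,\infty}'(0)$), not for the present lemma.
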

\begin{proof}
	Since $D_\infty\subset\D$, by Schwarz lemma, we have $f_{D,\infty}(\varepsilon\D)\supset\varepsilon\D$. Then by Proposition~\ref{prop:priori},
	\begin{align*}
		\wh\mu_\D^0[\ell\not\subset \varepsilon\D]-\mu_\C^0[\ell\not\subset\varepsilon\D,\ell\subset D]&=\bE\otimes\wh\mu_\D^0[\ell\not\subset \varepsilon\D]-\bE\otimes\wh\mu_\D^0[\ell\not\subset f_{D,\infty}(\varepsilon\D)]\\
		&=\bE\otimes\wh\mu_\D^0[\ell\subset f_{D,\infty}(\varepsilon\D)\setminus(\varepsilon\D)].
	\end{align*}
	Taking $\varepsilon\to0$, we get~\eqref{eq:ID-rec}. 
\end{proof}

We will use the following technical lemma to control the distortion of conformal maps. 

\begin{lemma}\label{lem:distortion}
	Let $D$ be a simply connected domain in $\D$ that contains $0$. Let $f$ be the conformal map from $D\to\D$ with $f(0)=0$ and $f'(0)>0$. Suppose $\eps,\beta \in (0,1)$, $\eps^{1-\beta}\le \frac{1}{16}$ and $f'(0)<\eps^{-\beta}$. There is a universal constant $C$ such that for any $z$ with $|z|=\eps$,
	\[
	f'(0)\varepsilon(1-C\varepsilon^{1-\beta})\le|f(z)|\le f'(0)\varepsilon(1+C\varepsilon^{1-\beta}).
	\]
\end{lemma}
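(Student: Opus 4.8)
\textbf{Proof proposal for Lemma~\ref{lem:distortion}.}

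The plan is to reduce the statement to the classical Koebe distortion theorem applied not to $f$ itself but to its inverse $g := f^{-1} : \D \to D$. First I would normalize: since $g$ is a conformal map of $\D$ into $\C$ with $g(0)=0$ and $g'(0) = 1/f'(0) > 0$, the function $G(w) := g(w)/g'(0)$ is a \emph{normalized} univalent function on $\D$ (i.e. $G(0)=0$, $G'(0)=1$), so the Koebe distortion theorem gives, for $|w| = s < 1$,
\begin{equation*}
\frac{s}{(1+s)^2} \le |G(w)| \le \frac{s}{(1-s)^2}, \qquad \frac{1-s}{(1+s)^3} \le |G'(w)| \le \frac{1+s}{(1-s)^3}.
\end{equation*}
The hypothesis $f'(0) < \eps^{-\beta}$ means $g'(0) > \eps^{\beta}$, and the Koebe $1/4$-theorem gives $D \supset g'(0) \cdot \tfrac14\D$, hence $\eps\D \subset \tfrac14 g'(0)\D \subset D$ once $\eps < \tfrac14 g'(0)$; more usefully, for a point $z$ with $|z| = \eps$ inside $D$, I want to locate $w = f(z) \in \D$ and show $|w|$ is small, of order $\eps/g'(0) \le \eps^{1-\beta}$.

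The key step is therefore to bound $s := |f(z)| = |w|$ from above. From the lower Koebe bound applied to $G$ at radius $s$, $|z| = |g(w)| = g'(0)|G(w)| \ge g'(0)\, s/(1+s)^2 \ge g'(0)\, s/4$ (for $s<1$), so $s \le 4|z|/g'(0) = 4\eps/g'(0) < 4\eps \cdot \eps^{-\beta}{}^{-1}$... let me restate: $s \le 4\eps/g'(0) \le 4\eps^{1-\beta}$ since $g'(0) > \eps^{\beta}$, wait that needs $1/g'(0) < \eps^{-\beta}$, i.e. $g'(0) > \eps^{\beta}$, which is exactly $f'(0) < \eps^{-\beta}$. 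Good. So under the hypothesis $\eps^{1-\beta} \le \tfrac1{16}$ we get $s \le \tfrac14 < 1$, which legitimizes using the distortion bounds at this radius. Now I would convert the two-sided Koebe bound $g'(0)\, s/(1+s)^2 \le |z| \le g'(0)\, s/(1-s)^2$ into a two-sided bound on $s$ in terms of $|z| = \eps$: write $s = \eps/g'(0) \cdot (1 + O(s))^{\pm 2} = f'(0)\eps \cdot (1 + O(\eps^{1-\beta}))$, using $s = O(\eps^{1-\beta})$ from the previous line and the elementary inequalities $(1\pm s)^{-2} = 1 + O(s)$ valid for $s \le \tfrac14$. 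This yields $f'(0)\eps(1 - C\eps^{1-\beta}) \le |f(z)| \le f'(0)\eps(1 + C\eps^{1-\beta})$ for a universal $C$, as claimed. The routine part is tracking the constant through $(1\pm s)^{\pm 2}$; one can be generous and absorb everything into a single universal $C$, using $\eps^{1-\beta} \le \tfrac1{16}$ to keep the correction factor bounded away from $0$.

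The only mild subtlety — the main thing to get right — is the bookkeeping order: one must first establish the \emph{a priori} bound $s = |f(z)| = O(\eps^{1-\beta})$ (which only uses the lower Koebe bound plus $f'(0) < \eps^{-\beta}$) before feeding $s$ back into the full distortion estimate, since the bounds $(1-s)^{-2}$ blow up as $s \to 1$ and are only useful once $s$ is known to be small. There is no real obstacle here; everything is a direct application of Koebe's theorems to $f^{-1}$, and no probabilistic input or earlier result from the paper is needed beyond elementary univalent function theory.
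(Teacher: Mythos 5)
Your proof is correct, and it takes a mildly different but entirely equivalent route from the paper's. The paper rescales $f$ itself by the inner radius $r = \dist(0,\partial D)$, observes that $g(w) := f(rw)/(rf'(0))$ is a normalized Schlicht function on $\D$, and invokes the distortion estimate $|g(w)-w|\le C|w|^2$ (Lawler, Prop.~3.26) to bound $|f(z)-f'(0)z|\le Cf'(0)\eps^{2-\beta}$ directly, from which the two-sided bound on $|f(z)|$ falls out in one line. You instead normalize the \emph{inverse} $f^{-1}:\D\to D$, apply the classical Koebe growth theorem $\tfrac{s}{(1+s)^2}\le|G(w)|\le\tfrac{s}{(1-s)^2}$ to $G=f'(0)f^{-1}$, and then need the extra bookkeeping step you correctly identify: first establish the a priori bound $s=|f(z)|\le 4\eps^{1-\beta}\le\tfrac14$ (valid since $f'(0)<\eps^{-\beta}$ and $\eps^{1-\beta}\le\tfrac1{16}$), then feed that back into the two-sided Koebe bound to extract $f'(0)\eps(1-Cs)\le s\le f'(0)\eps(1+Cs)$. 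Both proofs are elementary univalent-function-theory arguments resting on the same underlying distortion inequalities; what the paper's version buys is that it bounds $f(z)-f'(0)z$ in one pass without the self-referential bootstrap on $s$, while yours has the minor advantage of using only the textbook-standard Koebe growth theorem rather than a particular cited estimate. Your remark that $\eps\D\subset D$ (so that $f(z)$ is defined for $|z|=\eps$) is a point the paper glosses over but which is worth making, and your verification $\eps f'(0)<\eps^{1-\beta}\le\tfrac1{16}<\tfrac14$ handles it correctly.
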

\begin{proof}
	By Koebe's 1/4 theorem, we have $r:=\dist(0,\partial D)\ge \frac{1}{4}f'(0)^{-1}>\frac{1}{4}\eps^\beta$. Consider the conformal map $g(w):=f(rw)/(rf'(0))$ on $\D$, which satisfies $g'(0)=1$. By~\cite[Proposition 3.26]{lawler2008conformally}, for any $|w|=\frac{\eps}{r}\le 4\eps^{1-\beta}\le \frac{1}{4}$, we have 
	$|g(w)-w|\le C \,|w|^2$ for some universal constant $C$.
	Setting $z=rw$, we have $|f(z)-f'(0)z|\le Cf'(0)\eps^{2-\beta}$, concluding the proof.
\end{proof}

\begin{lemma}\label{lem:IDs}
	For any $\alpha_0\in(0,\frac12]$, we have 
	\begin{equation}\label{eq:ids}
		\lim_{\varepsilon\to0}\bE\otimes\wh\mu_\D^0[\ell\subset f_{D,\infty}(\varepsilon\D)\setminus(\varepsilon\D);\ f_{D,\infty}'(0)<\varepsilon^{-\alpha_0}]=\lambda\,\bE\log f_{D,\infty}'(0).
	\end{equation}
	Therefore,
	\begin{equation}\label{eq:finit}
		\lambda\,\bE\log f_{D,\infty}'(0)\le I(D)<\infty.
	\end{equation}
\end{lemma}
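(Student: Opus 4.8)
The plan is to combine Lemma~\ref{lem:Ils} with the uniform estimate of Corollary~\ref{cor:83mass-scale} and the distortion control of Lemma~\ref{lem:distortion}, splitting the expectation according to the size of $f_{D,\infty}'(0)$. Fix $\alpha_0\in(0,\tfrac12]$ and write $M:=f_{D,\infty}'(0)$, which is $\bP$-a.s.\ in $[1,\infty)$ since $D_\infty\subset\D$ and $0\in D_\infty$ (Schwarz lemma). On the event $\{M<\varepsilon^{-\alpha_0}\}$, Lemma~\ref{lem:distortion} (applied with $\beta=\alpha_0$, which is legitimate once $\varepsilon$ is small enough that $\varepsilon^{1-\alpha_0}\le\tfrac1{16}$) gives that $f_{D,\infty}(\varepsilon\D)$ is sandwiched between $M\varepsilon(1-C\varepsilon^{1-\alpha_0})\D$ and $M\varepsilon(1+C\varepsilon^{1-\alpha_0})\D$. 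Since $\varepsilon\D\subset f_{D,\infty}(\varepsilon\D)$, the $\wh\mu_\D^0$-mass of loops in $f_{D,\infty}(\varepsilon\D)\setminus\varepsilon\D$ equals $\wh\mu_\D^0[\|\ell\|_\infty\in[\varepsilon, M\varepsilon(1\pm C\varepsilon^{1-\alpha_0}))]$ up to an error coming from the annulus of modulus $O(\varepsilon^{1-\alpha_0})$; by Corollary~\ref{cor:83mass-scale} (with $a=M$, valid since $1\le M\le\varepsilon^{-\alpha_0}$) this mass is $\lambda\log M + o(1)$, where the $o(1)$ is uniform in $M$ on this event and also absorbs the annular error (again by Corollary~\ref{cor:83mass-scale}, since $\log(1\pm C\varepsilon^{1-\alpha_0})\to0$). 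Hence, inside the expectation, the integrand ${\bf 1}_{M<\varepsilon^{-\alpha_0}}\wh\mu_\D^0[\ell\subset f_{D,\infty}(\varepsilon\D)\setminus\varepsilon\D]$ converges pointwise $\bP$-a.s.\ to $\lambda\log M$, and is bounded by $\lambda\log M + o(1)\le \lambda\log M + 1$ for $\varepsilon$ small.

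To pass the limit through $\bE$ one needs integrability: $\bE\log f_{D,\infty}'(0)<\infty$. This I would get from the already-established finiteness $I(D)<\infty$ (Lemma~\ref{lem:ID1}) together with Lemma~\ref{lem:Ils}: by Fatou applied to the nonnegative integrands ${\bf 1}_{M<\varepsilon^{-\alpha_0}}\wh\mu_\D^0[\ell\subset f_{D,\infty}(\varepsilon\D)\setminus\varepsilon\D]$, we get $\lambda\bE\log M = \bE[\liminf_\varepsilon(\cdots)]\le\liminf_\varepsilon\bE[(\cdots)]\le\liminf_\varepsilon\bE\otimes\wh\mu_\D^0[\ell\subset f_{D,\infty}(\varepsilon\D)\setminus\varepsilon\D] = I(D)<\infty$, which is exactly the left inequality in~\eqref{eq:finit} and shows $\bE\log M<\infty$. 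With integrability in hand, the pointwise convergence plus a dominating function of the form $\lambda\log M+1$ lets dominated convergence finish~\eqref{eq:ids}.

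The main obstacle is making the $o(1)$ in the annulus estimate genuinely uniform over the event $\{M<\varepsilon^{-\alpha_0}\}$: the annulus $f_{D,\infty}(\varepsilon\D)\setminus M\varepsilon(1-C\varepsilon^{1-\alpha_0})\D$ has inner radius as large as $\approx\varepsilon^{1-\alpha_0}$ (when $M$ is near its ceiling), so one cannot simply bound its $\wh\mu_\D^0$-mass by a fixed small quantity; one must invoke the scale-invariant bound of Corollary~\ref{cor:83mass-scale} with $a = (1-C\varepsilon^{1-\alpha_0})^{-1}$ and starting scale $M\varepsilon(1-C\varepsilon^{1-\alpha_0})\ge\varepsilon\cdot(\text{something})$, checking that the hypothesis $1\le a\le(\text{start scale})^{-\beta}$ holds for a suitable $\beta$. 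A clean way around this is to choose the auxiliary exponent in Corollary~\ref{cor:83mass-scale} slightly larger than $\alpha_0$ (say fix $\beta=\tfrac12$ throughout, which dominates $\alpha_0$), so that all the scales and ratios appearing — namely $\varepsilon$, $M\varepsilon$, and the distortion factors $1\pm C\varepsilon^{1-\alpha_0}$ — fall within the range where the uniform $o(1)$ of Corollary~\ref{cor:83mass-scale} applies; the $o(1)$'s being independent of $\beta$ there is precisely what makes this robust. Everything else (the two-sided pointwise sandwich, Fatou for the lower bound, dominated convergence for the upper bound) is routine.
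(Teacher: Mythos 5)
Your proposal is correct and follows essentially the same route as the paper: the distortion sandwich of Lemma~\ref{lem:distortion} on the event $\{f_{D,\infty}'(0)<\varepsilon^{-\alpha_0}\}$, the uniform annulus estimate of Corollary~\ref{cor:83mass-scale}, and comparison with Lemma~\ref{lem:Ils} and Lemma~\ref{lem:ID1} to get \eqref{eq:finit}. The only (harmless) difference is bookkeeping: you first extract integrability of $\log f_{D,\infty}'(0)$ via Fatou and then invoke dominated convergence with the dominating function $\lambda\log f_{D,\infty}'(0)+1$, whereas the paper pulls the uniform $o(1)$ out of the expectation and passes to the limit directly; your side remark about the boundary case $\alpha_0=\tfrac12$ (where $a=M(1+C\varepsilon^{1/2})$ may slightly exceed $\varepsilon^{-1/2}$) is a trivially repairable range issue, e.g.\ by appealing to Proposition~\ref{prop:dc} as in the proof of Corollary~\ref{cor:83mass-scale}.
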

\begin{proof}
	By Lemma~\ref{lem:distortion}, we have
	\begin{align*}
		&\Big|\bE\otimes\wh\mu_\D^0[\ell\subset f_{D,\infty}(\varepsilon\D)\setminus(\varepsilon\D);\ f_{D,\infty}'(0)<\varepsilon^{-\alpha_0}]\\&-\bE\otimes\wh\mu_\D^0[\|\ell\|_\infty\in[\varepsilon, f_{D,\infty}'(0)\varepsilon);\ f_{D,\infty}'(0)<\varepsilon^{-\alpha_0}]\Big|\\
		\le\ & \bE\otimes\wh\mu_\D^0[\|\ell\|_\infty\in[f_{D,\infty}'(0)\varepsilon(1-C\varepsilon^{1-\alpha_0}), f_{D,\infty}'(0)\varepsilon(1+C\varepsilon^{1-\alpha_0});\ f_{D,\infty}'(0)<\varepsilon^{-\alpha_0}]\\
		\le\ & O(\eps^{1-\alpha_0})\,\bP(f_{D,\infty}'(0)<\varepsilon^{-\alpha_0}) \to 0 \quad \text{as} \quad \eps\to 0,
	\end{align*}
	where we used Corollary~\ref{cor:83mass-scale} to estimate the $\wh\mu_\D^0$-mass of $\ell$ in the second last line. 
	It follows that
	\begin{align*}
		& \lim_{\eps\to0}\bE\otimes\wh\mu_\D^0[\|\ell\|_\infty\in[\varepsilon, f_{D,\infty}'(0)\varepsilon);\ f_{D,\infty}'(0)<\varepsilon^{-\alpha_0}]\\
		=\ &\lim_{\eps\to0} \bE[\lambda\,\log f_{D,\infty}'(0)+o(1);\ f_{D,\infty}'(0)<\varepsilon^{-\alpha_0}]
		&\quad\text{(by Corollary~\ref{cor:83mass-scale})}\\
		=\ &\lambda\,\bE\log f_{D,\infty}'(0) &\quad\text{(the $o(1)$ is independent of $\bE$)}.
	\end{align*}
	Combined, we complete the proof of \eqref{eq:ids}.	
Now combining~\eqref{eq:ids} and Lemma~\ref{lem:Ils}, we obtain that
    \[\lambda\,\bE\log f_{D,\infty}'(0)\le\lim_{\varepsilon\to0}\bE\otimes\wh\mu_\D^0(\ell\subset f_{D,\infty}(\varepsilon\D)\setminus(\varepsilon\D))=I(D).\] 
    By Lemma~\ref{lem:ID1}, $I(D)<\infty$, and~\eqref{eq:finit} then follows.
\end{proof}

Finally, we turn to 

\begin{proof}[Proof of Proposition~\ref{prop:elog0}]
	By \eqref{eq:ids} and Lemma~\ref{lem:Ils},
	it remains to show that
	\begin{equation*}
		\lim_{\eps\to0}\bE\otimes\wh\mu_\D^0[\ell\subset f_{D,\infty}(\varepsilon\D)\setminus(\varepsilon\D);\ f_{D,\infty}'(0)\ge\varepsilon^{-\alpha_0}]=0.
	\end{equation*}
	First, by \eqref{eq:finit}, as $\varepsilon\to0$,
	\[
	|\log\varepsilon|\bP[f_{D,\infty}'(0)\ge\varepsilon^{-\alpha_0}]\le\frac{1}{\alpha_0}\bE[\log f_{D,\infty}'(0),\ f_{D,\infty}'(0)>\varepsilon^{-\alpha_0}]\to0.
	\]
	Therefore, by Proposition~\ref{prop:dc}, we have
	\begin{align*}
		\bE\otimes\wh\mu_\D^0[\ell\subset f_{D,\infty}(\varepsilon\D)\!\setminus\!(\varepsilon\D);\ f_{D,\infty}'(0)\ge\varepsilon^{-\alpha_0}]&\le \bE\otimes\wh\mu_\D^0[\|\ell\|_\infty\in[\varepsilon,1);\ f_{D,\infty}'(0)\ge\varepsilon^{-\alpha_0}]\\
		&=(\lambda|\log\varepsilon|\!+\!\zeta_1\!+\!o(1))\bP[f_{D,\infty}'(0)\ge\varepsilon^{-\alpha_0}],
	\end{align*}
	which tends to $0$ as $\varepsilon\to0$. The proof is now complete.    
\end{proof}

\section{Electrical thickness}\label{sec:ele-thik}

This section is to establish Theorem~\ref{thm:electrical-thickness} from Theorem~\ref{thm:intersect}. We inherit the notation from Section~\ref{sec:loop}.
We first show that the $\mu_\C^0$-mass of loops that intersect the unit circle $\S^1$ is finite.
\begin{lemma}\label{lem:finiteness}
	For any $c\in (0,1]$, we have $\zeta_2:=\mu_\C^0(\ell\cap\S^1\neq\emptyset)<\infty$.
\end{lemma}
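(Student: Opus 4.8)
The plan is to sort the loops meeting $\S^1$ by their outer radius, use the scale invariance and the inversion symmetry $z\mapsto 1/z$ of $\mu_\C$ to bring each dyadic piece into a normalized position, and then bound it by the $\mu_\C$-mass of loops crossing a thin round annulus; summing a geometric series will finish the proof. For a simple loop $\ell$ surrounding $0$ write $m(\ell):=\min_{z\in\ell}|z|$ and $\|\ell\|_\infty:=\max_{z\in\ell}|z|$. If $\ell\cap\S^1\neq\emptyset$ then $m(\ell)\le 1\le\|\ell\|_\infty$; moreover, since $\ell$ surrounds $0$ and passes through a point $z_0$ with $|z_0|=1$, the ray from $0$ in the direction of $-z_0$ must meet $\ell$ again, so $\mathrm{diam}(\ell)\ge 1$.

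First I would dispose of the bounded part: the loops with $\ell\subset 2\D$ and $\ell\cap\S^1\neq\emptyset$ all have diameter $\ge 1$, so their $\mu_\C^0$-mass is finite by the non-triviality of $\mu_\C$ (applied with the bounded domain $2\D$ and $\delta=\tfrac12$). It then remains to control $\mu_\C^0[\ell\cap\S^1\neq\emptyset,\ \|\ell\|_\infty\ge 2]=\sum_{k\ge1}\mu_\C^0[\ell\cap\S^1\neq\emptyset,\ \|\ell\|_\infty\in[2^k,2^{k+1})]$. By the scale invariance of $\mu_\C^0$ the $k$-th term equals $\mu_\C^0[\ell\cap 2^{-k}\S^1\neq\emptyset,\ \|\ell\|_\infty\in[1,2)]$; on this event $\ell$ surrounds $0$, is contained in $2\D$, and meets both $\{|z|\le 2^{-k}\}$ and $\{|z|\ge 1\}$, i.e.\ $\ell$ crosses the round annulus $A_k:=\{2^{-k}<|z|<1\}$, whose modulus is of order $k$. (By the inversion invariance of $\mu_\C$ this is the same estimate that governs the ``very large'' loops meeting $\S^1$, which is why Step~1 together with this suffices.) Hence the $k$-th term is at most $p_k:=\mu_\C^0[\ell\subset 2\D,\ \ell\text{ crosses }A_k]$.

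The hard part will be to show that $p_k$ decays fast enough in $k$, say $p_k\le Ce^{-\beta k}$ for some $C,\beta\in(0,\infty)$; granting this, $\zeta_2\le(\text{finite})+\sum_{k\ge1}p_k<\infty$ and we are done. By the uniqueness of the MKS measure (Theorem~\ref{thm:loop-unique}) one may take $\mu_\C$ to be a concrete representative, for instance $\SLE_\kappa^\lp$ (the counting measure on full-plane $\CLE_\kappa$ loops with $\kappa=\kappa(c)$), for which the mass of loops surrounding $0$ that cross an annulus of modulus $M$ decays exponentially in $M$ — a standard consequence of the $\CLE_\kappa$/$\SLE_\kappa$ disconnection exponents; alternatively one can argue directly from the $\SLE_{8/3}$-loop-soup description of $\mu_\C$ using the finiteness of the number of macroscopic clusters (as in Lemma~\ref{lem:dH}), or, in the $c=0$ spirit, from the Brownian loop measure estimate for loops crossing a thin annulus (in the vein of Lemma~\ref{lem:83mass} and \cite{NW11}). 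I expect this annulus-crossing bound to be the only real input; everything else is the dyadic bookkeeping indicated above.
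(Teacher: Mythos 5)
Your proposal follows a genuinely different route from the paper, and the route has a real gap at the step you flag as the ``hard part.''

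The paper also begins by invoking Theorem~\ref{thm:loop-unique} to reduce to the concrete representative $\mu_\C=\SLE_\kappa^\lp$, but from there it avoids any dyadic decomposition or annulus-crossing estimate. Instead it uses a clean renewal argument: there is a universal $u>0$ such that for every simply connected $D\ni 0$ with $\dist(0,\partial D)<1$, the outermost $\CLE_\kappa$ loop in $D$ surrounding $0$ lies in $\tfrac12\D$ with probability at least $u$ (this is a soft consequence of the loop-soup construction of $\CLE_\kappa$). Peeling off nested loops surrounding $0$ and applying the domain Markov property, after each loop meeting $\S^1$ there is probability $\ge u$ that the next loop is already in $\tfrac12\D$, after which no further loop surrounding $0$ can meet $\S^1$. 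Hence the number of loops hitting $\S^1$ is stochastically dominated by a geometric variable of parameter $u$, giving $\zeta_2\le 1/u<\infty$ with essentially no computation.

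Your bookkeeping — the diameter-$\ge 1$ observation, disposing of the bounded part via non-triviality, scale invariance to normalize the dyadic pieces, and the reduction $p_k'\le p_k$ — is all correct. But the decisive input, the summable decay $p_k\le Ce^{-\beta k}$ for the $\SLE_\kappa^\lp$-mass of loops in $2\D$ crossing the annulus $\{2^{-k}<|z|<1\}$, is not supplied; it is declared ``a standard consequence of disconnection exponents.'' For $c\in(0,1]$ this is not a consequence of anything proved in the paper, and it is not a one-liner: one has to convert a $\CLE_\kappa$ one-arm/conformal-radius tail into a bound on an expected count of nested loops, taking care because the relevant conditioning makes each crossing probability depend on the geometry of the previous loop. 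The uniform geometric bound that the paper's renewal argument gives on the number of loops hitting $\S^1$ does not by itself decay in $k$, so the paper's ingredients alone do not fill this hole; you would need an extra quantitative CLE estimate (e.g.\ via the Schramm--Sheffield--Wilson law for the conformal radius of the outermost loop). The Brownian-loop / Lemma~\ref{lem:83mass} route only covers $c=0$. In short: your plan is plausible and, if the annulus-crossing estimate were established, it would even give more quantitative information than the paper does; but as it stands the key estimate is a missing lemma, and the paper's Markov-chain argument proves finiteness more directly and with fewer external inputs.
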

\begin{proof}
	According to the uniqueness of the MKS measure (Theorem~\ref{thm:loop-unique}), it suffices to prove the statement for $\mu_\C=\SLE_\kappa^\lp$, the counting measure on the full-plane $\CLE_\kappa$ configuration $\Gamma$.
	First, there is a universal $u>0$ such that for any simply connected domain $D$ containing $0$ with $\dist(0,\partial D)<1$, with probability at least $u$, the outermost $\CLE_\kappa$ loop in $D$ surrounding $0$ is contained in $\frac{1}{2}\D$  (this is directly from the loop-soup construction of $\CLE_\kappa$~\cite{sheffield2012conformal}; see e.g. \cite[Section 3.1, Fact 4]{kemppainen2016nested}). Then by the Markov property of $\CLE_\kappa$, the number of loops in $\Gamma$ intersecting $\S^1$ is stochastically dominated by a geometric distribution with parameter $u$, i.e., $\mu_\C^0(\ell\cap\S^1\neq\emptyset)\le \sum_{n\ge1}nu(1-u)^{n-1}<\infty$.
\end{proof}

Denote $\A_{\varepsilon,R}:=\{z:\varepsilon<|z|< R\}$. The following gives the $\mu_\C^0$-mass of loops that intersect $\A_{\varepsilon,R}$. 
\begin{lemma}\label{lem:c0}
	For any $c\in (0,1]$, let
	\begin{equation}\label{eq:zeta}
		\zeta:=\zeta_2-2\zeta_1=\mu_\C^0(\ell\cap\S^1\neq\emptyset)-2\lambda\bE\log f_{\D,\infty}'(0) \in(-\infty,\infty),
	\end{equation}
	where $\zeta_1$ and $\zeta_2$ are defined in Proposition~\ref{prop:dc} and Lemma~\ref{lem:finiteness} respectively. Then, we have
	\begin{equation}\label{eq:zeta2}
		\lim_{\varepsilon\to0, R\to\infty}\big(\mu_\C^0(\ell\cap \A_{\varepsilon,R}\neq\emptyset)-\wh\mu_\D^0(\|\ell\|_\infty\in(\varepsilon,1))-\wh\mu_\D^0(\|\ell\|_\infty\in(R^{-1},1))\big)=\zeta.
	\end{equation}
\end{lemma}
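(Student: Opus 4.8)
The plan is to compute $\mu_\C^0(\ell\cap\A_{\varepsilon,R}\neq\emptyset)$ in closed form, up to the additive constant $\zeta_2$, and then read off \eqref{eq:zeta2} by subtracting the two $\wh\mu_\D^0$-terms via Proposition~\ref{prop:dc}. The target is the exact identity
\[
\mu_\C^0(\ell\cap\A_{\varepsilon,R}\neq\emptyset)=\lambda\log(R/\varepsilon)+\zeta_2,\qquad 0<\varepsilon<R.
\]
Once this is in hand, write $\log(R/\varepsilon)=|\log\varepsilon|+\log R$ for $\varepsilon<1<R$; Proposition~\ref{prop:dc} gives $\wh\mu_\D^0(\|\ell\|_\infty\in(\varepsilon,1))=\lambda|\log\varepsilon|+\zeta_1+o(1)$ as $\varepsilon\to0$ and $\wh\mu_\D^0(\|\ell\|_\infty\in(R^{-1},1))=\lambda\log R+\zeta_1+o(1)$ as $R\to\infty$ (the open endpoints being harmless, since $\mu_\C^0(\|\ell\|_\infty=r)=0$ by Lemma~\ref{lem:c-mass}, whence $\wh\mu_\D^0(\|\ell\|_\infty=r)=0$ by \eqref{eq:hmu} and Lemma~\ref{lem:83mass}); subtracting leaves $\zeta_2-2\zeta_1+o(1)\to\zeta$, which is \eqref{eq:zeta2}. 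Finiteness of $\zeta=\zeta_2-2\zeta_1$ follows from $\zeta_1\in(0,\infty)$ (Proposition~\ref{prop:dc}) and $\zeta_2<\infty$ (Lemma~\ref{lem:finiteness}).

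To obtain the exact identity I would first record the elementary geometry: for a loop $\ell$ surrounding the origin, writing $\overline\ell:=\|\ell\|_\infty$ and $\underline\ell:=\min_{z\in\ell}|z|$, continuity of $z\mapsto|z|$ along $\ell$ gives $\ell\cap\A_{\varepsilon,R}\neq\emptyset\iff\overline\ell>\varepsilon$ and $\underline\ell<R$, and $\ell\cap r\S^1\neq\emptyset\iff\underline\ell\le r\le\overline\ell$. I then split $\{\ell\cap\A_{\varepsilon,R}\neq\emptyset\}$ according to whether $\ell\subset R\D$. On $\{\ell\subset R\D\}=\{\overline\ell<R\}$ the constraint $\underline\ell<R$ is automatic, so this part equals $\{\overline\ell\in(\varepsilon,R)\}$, of $\mu_\C^0$-mass $\lambda\log(R/\varepsilon)$ by Lemma~\ref{lem:c-mass}. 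On $\{\ell\not\subset R\D\}=\{\overline\ell\ge R\}$ the constraint $\overline\ell>\varepsilon$ is automatic, so this part equals $\{\overline\ell\ge R,\ \underline\ell<R\}$, which agrees with $\{\ell\cap R\S^1\neq\emptyset\}=\{\underline\ell\le R\le\overline\ell\}$ up to a $\mu_\C^0$-null set; by scaling invariance of $\mu_\C^0$ the latter has mass $\mu_\C^0(\ell\cap\S^1\neq\emptyset)=\zeta_2$, finite by Lemma~\ref{lem:finiteness}. Adding the two parts gives the identity.

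The only non-routine point is to check that the exceptional loops discarded above — those with $\overline\ell=R$ or $\underline\ell=R$ — form a $\mu_\C^0$-null set. That $\mu_\C^0(\overline\ell=R)=0$ is immediate from Lemma~\ref{lem:c-mass}. For $\mu_\C^0(\underline\ell=R)=0$ I would use the inversion invariance of $\mu_\C^0$: the pushforward of any MKS measure under $z\mapsto R^2/z$ is again an MKS measure of the same central charge, since $\Lambda^*$ is M\"obius invariant, so it coincides with $\mu_\C$ by the uniqueness Theorem~\ref{thm:loop-unique}; this map sends loops surrounding the origin to loops surrounding the origin with $\underline\ell$ and $R^2/\overline\ell$ interchanged, whence $\mu_\C^0(\underline\ell=R)=\mu_\C^0(\overline\ell=R)=0$. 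I expect this null-set bookkeeping — together with the (standard) verification that $z\mapsto R^2/z$ preserves $\mu_\C^0$ — to be the main obstacle; everything else reduces to the decomposition above, Lemmas~\ref{lem:c-mass}, \ref{lem:83mass} and~\ref{lem:finiteness}, Proposition~\ref{prop:dc}, and arithmetic.
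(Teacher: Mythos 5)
Your proof is correct and essentially matches the paper's: the paper decomposes $\{\ell\cap\A_{\varepsilon,R}\neq\emptyset\}$ into $\{\|\ell\|_\infty\in(\varepsilon,1)\}$, $\{\|\ell\|_{\min}\in(1,R)\}$ and $\{\ell\cap\S^1\neq\emptyset\}$ and then applies inversion invariance of $\mu_\C^0$ together with Lemma~\ref{lem:c-mass}, Lemma~\ref{lem:finiteness} and Proposition~\ref{prop:dc}, which amounts to the same exact identity $\mu_\C^0(\ell\cap\A_{\varepsilon,R}\neq\emptyset)=\lambda\log(R/\varepsilon)+\zeta_2$ that you derive. The only differences are cosmetic: the paper's trisection at $\S^1$ is an exact partition, so no null-set bookkeeping at $R\S^1$ is required, and inversion invariance of $\mu_\C^0$ is simply invoked (it can be justified, as in Lemma~\ref{lem:finiteness}, by using Theorem~\ref{thm:loop-unique} to reduce to $\SLE_\kappa^\lp$, whose M\"obius invariance is known) rather than re-derived from the uniqueness theorem as you sketch.
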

\begin{proof}
	The last equation of \eqref{eq:zeta} follows from Proposition~\ref{prop:elog0}.
	To show \eqref{eq:zeta2}, write $\|\ell\|_{\min}:=\min_{z\in\ell}|z|$, and we decompose the mass of loops as 
	\begin{equation}\label{eq:decom}
		\mu_\C^0(\ell\cap \A_{\varepsilon,R}\neq\emptyset)= \mu_\C^0(\|\ell\|_\infty\in(\varepsilon,1))+\mu_\C^0(\|\ell\|_{\min}\in(1,R))+\mu_\C^0(\ell\cap\S^1\neq\emptyset).
	\end{equation}
	Note that $\mu_\C^0(\|\ell\|_{\min}\in(1,R))=\mu_\C^0(\|\ell\|_\infty\in(R^{-1},1))$ by inversion invariance. Using Proposition~\ref{prop:dc} to estimate the first two terms on the right-hand side of \eqref{eq:decom}, we finish the proof.
\end{proof}

Next, we consider general simply connected domain $D\subset\C$ containing $0$ (without the assumption that $D\subset\D$). The definition of $I(D)$ given in \eqref{eq:ID} can be extended to this general case straightforwardly. 
We can generalize the result in Proposition~\ref{prop:elog0} to $D\subset\C$ by scaling.

\begin{proposition}\label{prop:elog}
	For any simply connected domain $D\subset\C$ containing $0$, we have
	\begin{align}\label{eq:elog}
		I(D)=\lambda\,\bE\log f_{D,\infty}'(0).
	\end{align}
\end{proposition}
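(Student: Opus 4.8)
The plan is to reduce the general case $D\subset\C$ to the already-established case $D\subset\D$ (Proposition~\ref{prop:elog0}) using the scaling invariance of the MKS measure, which was the starting assumption throughout Section~\ref{sec:loop}. First I would observe that the quantity $I(D)$, as defined in \eqref{eq:ID}, makes sense for any simply connected $D$ containing $0$: the $\varepsilon\to0$ limit exists because, by the same argument as in Lemma~\ref{lem:ID1}, one can write $I(D)=\mu_\C^0[\ell\not\subset D,\,\ell\subset\D]+\zeta_1$ whenever $D\subset\D$, and in general one should first rescale $D$ into $\D$.

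The key step is the scaling identity. Fix $\rho>0$ small enough that $\rho D\subset\D$ (possible since $0$ is interior to $D$ and, after intersecting, one only cares about loops of bounded size anyway; more carefully, one uses $\mu_\C^0[\ell\not\subset\varepsilon\D,\ell\subset D]$, and the relevant loops live in a fixed bounded region once $\varepsilon$ is small). By scaling invariance of $\mu_\C$ (hence of $\mu_\C^0$ and of $\wh\mu_\D^0$ after the appropriate rescaling), the map $z\mapsto\rho z$ sends the configuration defining $I(D)$ to the one defining $I(\rho D)$, so that $I(\rho D)=I(D)$; more precisely one checks $\mu_\C^0[\ell\not\subset\varepsilon\D,\ell\subset\rho D]=\mu_\C^0[\ell\not\subset(\varepsilon/\rho)\D,\ell\subset D]$ after scaling by $\rho^{-1}$, and that the $\wh\mu_\D^0[\ell\not\subset\varepsilon\D]$ normalizing term shifts by exactly $\lambda\log\rho$ thanks to Proposition~\ref{prop:dc} (i.e. $\wh\mu_\D^0(\|\ell\|_\infty\in[\varepsilon,1))=\lambda|\log\varepsilon|+\zeta_1+o(1)$). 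Tracking these two shifts, they cancel, giving $I(\rho D)=I(D)$. On the conformal-map side, if $f_{D,\infty}$ is the map from the origin-component of $\C\setminus\mathcal{C}_{\partial D}$ to $\D$ (built from an $\SLE_{8/3}$ loop soup sample), then scaling the whole picture by $\rho$ and using the scaling invariance of $\mathcal{L}_\C^{\mathcal{W}}$ shows that $f_{\rho D,\infty}'(0)=\rho^{-1}f_{D,\infty}'(0)$ in distribution, hence $\bE\log f_{\rho D,\infty}'(0)=\bE\log f_{D,\infty}'(0)-\log\rho$. Wait—this does not match $I(\rho D)=I(D)$ unless the $\log\rho$ terms are handled consistently; the correct bookkeeping is that \emph{both} $I$ and $\lambda\bE\log f'_{\cdot,\infty}(0)$ are scale-invariant once the $\wh\mu_\D^0$ normalization is included, because the normalization itself carries the compensating $\lambda\log\rho$. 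I would set this up carefully so that the identity to prove, namely \eqref{eq:elog}, is manifestly invariant under $D\mapsto\rho D$ on both sides.

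With scale-invariance of \eqref{eq:elog} in hand, the proof concludes: given arbitrary $D\subset\C$ containing $0$, choose $\rho>0$ with $\rho D\subset\D$, apply Proposition~\ref{prop:elog0} to get $I(\rho D)=\lambda\bE\log f_{\rho D,\infty}'(0)$, and then use the scale-invariance of both sides to transfer this back to $D$. The main obstacle I anticipate is purely bookkeeping: making the definition of $I(D)$ for unbounded or large $D$ unambiguous (the $\varepsilon\to0$ limit vs.\ the role of the fixed reference domain $\D$ in the $\wh\mu_\D^0$ term) and verifying that the $\SLE_{8/3}$ loop-soup observable $f_{D,\infty}'(0)$ transforms correctly under scaling, including that $D_\infty$ remains well-defined (the origin-component does not degenerate) for the scaled domain. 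None of this is deep, but one must be careful that the additive constants $\zeta_1$ on each side are the \emph{same} constant, which is exactly why Proposition~\ref{prop:dc} (with its single universal $\zeta_1$) is invoked.
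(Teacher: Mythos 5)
Your plan — rescale $D$ into $\D$, apply Proposition~\ref{prop:elog0}, and transfer back using the behavior of both sides of \eqref{eq:elog} under scaling — is exactly the route the paper takes. However, your scaling computation for $I$ is wrong at one point, and your attempted self-correction does not quite fix it.

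You assert ``Tracking these two shifts, they cancel, giving $I(\rho D)=I(D)$.'' They do not cancel. Writing $\mu_\C^0[\ell\not\subset\varepsilon\D,\ell\subset\rho D]=\mu_\C^0[\ell\not\subset(\varepsilon/\rho)\D,\ell\subset D]$ and then substituting $\delta=\varepsilon/\rho$ in the definition of $I(\rho D)$, the $\wh\mu_\D^0$ term becomes $\wh\mu_\D^0[\ell\not\subset\rho\delta\D]$, which by Proposition~\ref{prop:dc} equals $\wh\mu_\D^0[\ell\not\subset\delta\D]-\lambda\log\rho+o(1)$. The $\mu_\C^0$ re-indexing does not produce a compensating $+\lambda\log\rho$ anywhere; it merely reattaches the second term to $I(D)$. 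So the correct relation is $I(\rho D)=I(D)-\lambda\log\rho$, not $I(\rho D)=I(D)$. Neither $I$ nor $\lambda\bE\log f'_{\cdot,\infty}(0)$ is scale-\emph{invariant} (your later phrasing ``both are scale-invariant once the $\wh\mu_\D^0$ normalization is included'' is not meaningful, since that normalization appears only in $I$); rather, both transform \emph{covariantly}, each picking up exactly $-\lambda\log\rho$ when $D\mapsto\rho D$, which is what makes the \emph{identity} \eqref{eq:elog} scale-invariant. This is precisely the paper's bookkeeping: with $a:=\sup_{z\in D}|z|$ and $\wt D:=a^{-1}D\subset\D$, one has $I(D)=I(\wt D)-\lambda\log a$ and $f_{D,\infty}'(0)\stackrel{d}{=}a^{-1}f_{\wt D,\infty}'(0)$, and then Proposition~\ref{prop:elog0} applied to $\wt D$ gives \eqref{eq:elog}. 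Your final stated plan (``apply Proposition~\ref{prop:elog0} to $\rho D$ and transfer back'') is sound, but to actually execute it you need the covariance relations in the correct form; as written, the intermediate claim $I(\rho D)=I(D)$ would make the final equality fail by a term $\lambda\log\rho$.
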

\begin{proof}
	Let $a:=\sup_{z\in D}|z|$ and $\wt D:=a^{-1}D$. By~\eqref{eq:ID} and scaling invariance of $\mu_\C^0$, we have $I(D)=I(\wt D)-\lambda\log a$. Furthermore, since $\wt D\subset\D$, Proposition~\ref{prop:elog0} gives $I(\wt D)=\lambda\bE\log f_{\wt D,\infty}'(0)$. Moreover, by scaling invariance of the whole-plane $\SLE_{8/3}$ loop-soup, $f_{D,\infty}'(0)$ has the same law as $a^{-1}f_{\wt D,\infty}'(0)$. Combined, we obtain $I(D)=\lambda\bE\log f_{\wt D,\infty}'(0)-\lambda\log a=\lambda\bE\log f_{D,\infty}'(0)$, as desired.
\end{proof}

Now we complete the proof of Theorem~\ref{thm:electrical-thickness}. Let $V$ be any connected compact set separating $0$ and $\infty$, and recall the notation introduced above Theorem~\ref{thm:electrical-thickness}. For clarity, we fix the conformal maps $f_{V_\infty}$ and $h_{V_\infty}$ such that $f_{V_\infty}'(0),h_{V_\infty}'(\infty)>0$. Let $\Omega_V$ and $\Omega_V^*$ be the connected component of $\C\setminus V$ containing $0$ and $\infty$, respectively. 
\begin{proof}[Proof of Theorem~\ref{thm:electrical-thickness}]
	By Proposition~\ref{prop:elog} and inversion invariance, we have
    \begin{align}
    I(\Omega_{V})=\lim_{\varepsilon\to0}\left(\wh\mu_\D^0(\ell\not\subset\varepsilon\D)-\mu_\C^0(\ell\not\subset\varepsilon\D,\ell\subset\Omega_{V})\right)&=\lambda\bE\log f_{V_\infty}'(0),\label{eq:I1}\\
		\lim_{R\to\infty}\left(\wh\mu_\D^0(\ell\not\subset R^{-1}\D)-\mu_\C^0(\ell\not\subset R\D^*,\ell\subset\Omega_{V}^*)\right)&=-\lambda\bE\log h_{V_\infty}'(\infty).\label{eq:I2}
    \end{align}
Moreover, for any $0<\varepsilon<R<\infty$ such that $V\subset\A_{\varepsilon, R}$, we have
\begin{equation}\label{eq:break}
	\begin{aligned}
		\mu_\C^0(\ell\cap V\neq\emptyset)
		&=\mu_\C^0(\ell\cap \A_{\varepsilon, R}\neq\emptyset)-\mu_\C^0(\ell\not\subset\varepsilon\D,\ell\subset\Omega_{V})-\mu_\C^0(\ell\not\subset R\D^*,\ell\subset\Omega_{V}^*)\\
		&=\left(\wh\mu_\D^0(\ell\not\subset\varepsilon\D)-\mu_\C^0(\ell\not\subset\varepsilon\D,\ell\subset\Omega_{V})\right)\\&\quad\quad+\left(\wh\mu_\D^0(\ell\not\subset R^{-1}\D)-\mu_\C^0(\ell\not\subset R\D^*,\ell\subset\Omega_{V}^*)\right)\\
		&\quad\quad+\left(\mu_\C^0(\ell\cap \A_{\varepsilon, R}\neq\emptyset)-\wh\mu_\D^0(\ell\not\subset\varepsilon\D)-\wh\mu_\D^0(\ell\not\subset R^{-1}\D)\right).
	\end{aligned}
\end{equation}
	Taking $\varepsilon\to0$ and $R\to\infty$, using \eqref{eq:I1}, \eqref{eq:I2} and Lemma~\ref{lem:c0} to deal with the three terms on the right-hand side above respectively, we conclude that
	\begin{equation*}
		\mu_\C^0(\ell\cap V\neq\emptyset)=\lambda\bE\log f_{V_\infty}'(0)-\lambda\bE\log h_{V_\infty}'(\infty)+\zeta=\lambda\bE\left[\theta(V_\infty)\right]+\zeta,
	\end{equation*}
	which finishes the proof.
\end{proof}

\begin{remark}\label{rmk:c0}
We can get a counterpart of Theorem~\ref{thm:electrical-thickness} for $c=0$ similarly. Namely,
\begin{equation}\label{eq:8/3-et}
\mu_{\C,c=0}^0(\ell\cap V\neq\emptyset)=\lambda\theta(V)+ \mu_{\C,c=0}^0(\ell\cap \S^1\neq\emptyset),
\end{equation}
where $\lambda$ is provided by Lemma~\ref{lem:c-mass} 
for $c=0$, $\theta(V)$ is the electrical thickness of $V$, and $\mu_{\C,c=0}^0(\ell\cap \S^1\neq\emptyset)$ is finite due to~\cite[Lemmas~2 and~4]{NW11}.
To get~\eqref{eq:8/3-et}, it suffices to show for $V\subset\D$ by scaling. According to~\cite[Lemma 4]{werner2008conformally},
\[
\mu_{\C,c=0}^0(\ell\not\subset D, \ell\subset\D)=\lambda\log f_{D}'(0)\quad \text{for any simply connected } D\subset\D \text{ containing } 0.
\]
Then, \eqref{eq:8/3-et} follows by repeating~\eqref{eq:break} and taking the limit $\varepsilon\to0, R\to\infty$ (note that for $c=0$, $\wh\mu_\D^0(\cdot)=\mu_\C^0(\cdot{\bf 1}_{\cdot\subset\D})$).
    We remark that~\eqref{eq:8/3-et} looks similar to~\cite[Corollary 4.8]{wang2024brownian}, which computes the total mass of Brownian loops with winding $m\in\mathbb{Z}_+$ around $0$ and hitting $V$.
	
	As mentioned before, if $\mu_{\C,c=0}$ is Werner's measure $\mathcal{W}$,
	then $\lambda=\frac{\pi}{5}$~\cite{werner2008conformally}. One can also determine that $\mathcal{W}^0(\ell\cap \S^1\neq\emptyset)=\mathcal{W}(\ell\ {\rm surrounds}\ 0,\ \ell\cap \S^1\neq\emptyset)=\frac{\sqrt{6}}{15}$.\footnote{Let $Z_{8/3}(\tau)$ be the total mass of $\mathcal{W}$ on non-contractible loops in $\{z:e^{-2\pi\tau}<|z|< 1\}$. Then $\mathcal{W}(\ell\ {\rm surrounds}\ 0,\ \ell\cap \S^1\neq\emptyset)=\lim_{\tau\to\infty}Z_{8/3}(2\tau)-2Z_{8/3}(\tau)$. The explicit formula of $Z_{8/3}(\tau)$, up to a multiplicative constant, is given by Cardy $Z_{\rm Cardy}(\tau)$~\cite[(5)]{cardy2006n} and proved by~\cite{ang2022moduli}. By a forthcoming work~\cite{clqsw}, this multiplicative constant can be further specified, i.e., $Z_{8/3}(\tau)=\frac{3\sqrt{2}}{5}Z_{\rm Cardy}(\tau)$.}
\end{remark}

\section{Discussions and Possible Extensions}\label{sec:further}

At the end of this paper we briefly list several remarks and open questions.

\medskip
\noindent\textbf{General Riemann Surfaces.} Both the Brownian loop measure and the SLE loop measure can be defined on a general Riemann surface $\Sigma$, see e.g.~\cite{werner2008conformally, lawler2011defining, zhan2021sle}. It is interesting to see how our approach in Section~\ref{sec:loop} can be extended to the SLE loop measure $\mu_\Sigma$ on $\Sigma$. For instance, one can consider the $\SLE_{8/3}$ loop soup on $\Sigma$, and relate it to the quantity $\mu_\Sigma[\ell\subset A \text{ and non-contractible in } A]$ for some annular region $A\subset\Sigma$.

Note that recently Wang and Xue~\cite{wang2024brownian} demonstrated that there is a deep connection between the Brownian loop measure and the length spectrum of geodesics on $\Sigma$. Hence, it is natural to ask that on $\Sigma$ how the SLE loop measure relates to such geometric quantities.

\medskip
\noindent\textbf{The regime $c<0$ or $\kappa\in(0,\frac{8}{3})$.} Our approach relies on the interpretation of the loop-mass term in the generalized conformal restriction through the Brownian loop soup, which indeed utilizes the coupling from $\SLE_{8/3}$ to $\SLE_\kappa$ for $\kappa > \frac{8}{3}$. It would be interesting to see whether a similar approach can be adapted to the regime $c < 0$. 
This is connected to an alternative characterization of $\SLE_\kappa$ for $\kappa<\frac{8}{3}$ as the unique simple curve with the property that, when augmented with a specific density of Brownian loops, the resulting hull coincides with the union of certain Brownian motions~\cite{werner2003sles}.

\medskip
\noindent\textbf{Integrability of the SLE loop.} Our results, Theorems~\ref{thm:intersect} and~\ref{thm:electrical-thickness}, establish relationships between natural quantities of the $\SLE_\kappa$ loop measure and the $\SLE_{8/3}$ loop soup. A natural follow-up question is whether these quantities can be expressed as \emph{exact functions}, and whether further integrability for the $\SLE_\kappa$ loop measure can be obtained. To date, only a few results on the integrability of the $\SLE_\kappa$ loop measure are known, including its two- and three-point Green functions and its electrical thickness~\cite{ang2024imdozz,ang2024sle}, by Ang, Sun, Wu and the first author of this paper.

\appendix
\section{Carath\'eodory kernel theorem and its extension}

Let $\{U_n\}$ be a sequence of open subsets of $\C$ containing $z_0\in\C$. For each $n$, define $V_n$ to be the connected component (containing $z_0$) of the interior of the intersection $U_n \cap U_{n+1} \cap \cdots$. The \emph{kernel} of the sequence $\{U_n\}$ is the union of all such $V_n$ (it is either a connected open set containing $z_0$ or the single point set $\{z_0\}$). Let $U$ be an open subset of $\C$ containing $z_0$. We say $(U_n,z_0)$ converges to $(U,z_0)$ in the Carath\'eodory sense if $U$ is the kernel of every subsequence of $\{U_n\}$.

Note that if $U_n\to U$ in Hausdorff distance, and there exists $r>0$ such that $B(z_0,r)\subset U_n$ for all $n$, then $(U_n,z_0)\to (U,z_0)$ in the Carath\'eodory sense.

The following theorem is due to Carath\'eodory.
\begin{theorem}[Carath\'eodory kernel theorem]\label{thm:Caratheodory}

Let $f_n(z)$ be the conformal map defined on $\D$ with $f_n(0) = 0$ and $f_n'(0) > 0$. Then $f_n$ converges to a function $f$ locally uniformly if and only if $(f_n(\mathbb{D}),0)$ converges to $(U,0)$ for some open subset $U\neq\mathbb{C}$ in the Carath\'eodory sense. If $U=\{0\}$, then $f=0$; Otherwise, $U$ is simply connected and $f$ is the conformal map from $\D$ to $U$ with $f(0)=0,f'(0)>0$.

\end{theorem}

We extend Theorem~\ref{thm:Caratheodory} to the case when $(f_n)$ is normalized at three boundary points, which is needed in the proof of Lemma~\ref{lem:dH}.

\begin{proposition}\label{prop:bdy-caratheodory}
Let $\Gamma$ be a (relatively) open subset of $\S^1$.
Suppose that $\{U_n\}$ is a sequence of decreasing Jordan domains such that $\Gamma\subset\partial U_n$ for all $n$, and converges to a Jordan domain $U$ in Hausdorff distance with $\Gamma\subset\partial U$. Let $(a_i)_{1\le i\le 3}$ (resp. $(\alpha_i)_{1\le i\le3}$ be three distinct points on $\Gamma$ (resp. $\S^1$). Let $f_n:\D\to U_n$ (resp. $f:\D\to U$) be the conformal map such that $f_n(\alpha_i)=a_i$ (resp. $f(\alpha_i)=a_i$) for $1\le i\le3$. Then $f_n\to f$ locally uniformly.
\end{proposition}
\begin{proof}
Let $z_0\in U$. Then $(U_n,z_0)\to(U,z_0)$ in the Carath\'eodory sense. Let $g_n:\D\to U_n$ be a conformal map such that $g_n(0)=z_0$ and $g_n'(0)>0$. By Carath\'eodory kernel theorem, $g_n$ converges to $g$ locally uniformly; here $g$ is the conformal map from $\D$ to $U$ with $g(0)=z_0$ and $g'(0)>0$.

We claim that $g_n^{-1}:U_n\to\D$ converges to $g^{-1}:U\to \D$ locally uniformly on $U$. Since Montel's theorem ensures that $(g_n^{-1})$ is a normal family on $U$, it suffices to show $w_n:=g_n^{-1}(z)\to w:=g^{-1}(z)$ for every $z\in U$. For any $0<r<\dist(w,\partial\D)$, note that $g(B(w,r))\supset B(z,\frac{1}{4}|g'(w)|r)$ by Koebe’s 1/4 theorem. Since $g_n\to g$ locally uniformly and $g_n'(w)\to g'(w)$, using Carath\'eodory kernel theorem on $B(w,r)$, we find $(g_n(B(w,r)),w)\to(g(B(w,r)),w)$ in the Carath\'eodory sence. In particular, there exists $N$ such that $g_n(B(w,r))\supset B(z,\frac{1}{8}|g'(w)|r)$ for $n>N$. Since $g_n$ is univalent and $z=g_n(w_n)$, we conclude $w_n\in B(w,r)$ for $n>N$. Since $r$ is arbitrarily small, the claim then follows.

Next, since $\Gamma$ is a relatively open subset of $\S^1$, by Schwarz reflection, we further claim that $g_n^{-1}(a_i)\to g^{-1}(a_i)$ for $1\le i\le3$. Indeed, since $g(\frac{1}{2}\D)\cap\Gamma=\emptyset$, there exists $\varepsilon\in(0,\frac{1}{10})$ such that $B(a_i,2\varepsilon)\cap g(\frac{1}{2}\D)=\emptyset$. The local uniform convergence of $g_n\to g$ on $\D$ then implies $B(a_i,2\varepsilon)\cap g_n(\frac{1}{2}\D)=\emptyset$ for $n$ sufficiently large. Now using Schwarz reflection, we extend $g_n^{-1}$ to be analytic functions defined on $U':=U\cup(\cup_{i=1}^3B(a_i,\eps))$, and $g_n^{-1}(U')\subset 2\D$. Hence $g_n^{-1}$ is a normal family on $U'$, and every limit of its subsequence is equal to $g^{-1}$ when restricting on $U$. By uniqueness of analytic function, $g_n^{-1}$ defined on $U'$ converges locally uniformly to $g^{-1}$ defined on $U'$. In particular, we have $g_n^{-1}(a_i)\to g^{-1}(a_i)$ for $1\le i\le3$.

Note that $h_n:=g^{-1}_n\circ f_n$ (resp.\ $h:=g^{-1}\circ f$) is a conformal automorphism on $\D$, mapping $\alpha_i$ to $g_n^{-1}(a_i)$ (resp.\ $g^{-1}(a_i)$) for $1\le i\le3$. Hence $h_n(\alpha_i)\to h(\alpha_i), 1\le i\le3$ according to the above paragraph, which implies $h_n\to h$ locally uniformly.
Now, for any $z\in\D$,
\[
|f_n(z)-f(z)|=|g_n\circ h_n(z)-g\circ h(z)|\le |g_n(h_n(z))-g_n(h(z))|+|g_n(h(z))-g(h(z))|,
\]
which tends to $0$ as $n\to\infty$ (using the local uniform convergence of $g_n$). This shows that $f_n\to f$ locally uniformly since $\{f_n\}$ is a normal family.
\end{proof}

\section{Proof of (\ref{eq:loop_soup})}\label{appd2}
Now, we complete the proof of \eqref{eq:loop_soup}.
This can be justified as follows. Let $\mathcal{L}_\C^\cW$ be a whole-plane $\SLE_{8/3}$ loop soup of intensity $\frac{c}{2}$, and $(C_i)$ be the collection of its loop clusters surrounding $0$. For each $i$, denote by $\eta_i$ the inner boundary of $C_i$, i.e., the connected component of $\partial C_i$ that surrounds $0$ and is closest to $0$, which is a simple loop from the paragraph just above \cite[Theorem~2]{kemppainen2016nested}.
Below, we use $\cD(\gamma)$ to denote be the bounded connected component of $\C\setminus\gamma$ for a simple loop $\gamma\subset\C$.

We first claim that if we sample $\eta$
from the counting measure on $\{\eta_i\}$, then the joint law of $\eta$ and the loop configuration $\mathcal{L}_\eta$ inside $\cD(\eta)$ is the same as, an independent $\SLE_{8/3}$ loop soup in $\cD(\eta)$ of intensity $\frac{c}{2}$ with $\eta$ first sampled
from the intensity measure $\nu^i$ (defined for each measurable set $A$ of simple loops, $\nu^i(A)$ is the mean number of loops $\{\eta_i\}$ in $A$; see \cite{kemppainen2016nested})\footnote{Note that $\nu^i$ is indeed a constant multiple of $\SLE_\kappa^\lp$ restricted to loops surrounding $0$ by \cite[Theorem~2]{kemppainen2016nested}.}.
Based on this claim, by restricting the above to the event that $\eta$ is chosen to be the innermost loop of $(\eta_i)$ surrounding $D$ (given $\eta$, it corresponds to the event that $\mathcal{L}_\eta$ has no cluster surrounding $D$ without intersecting $D$), we are able to conclude \eqref{eq:loop_soup}.

It remains to show the claim. Suppose $V$ is any simply connected bounded domain containing $0$. The loops in $\{\eta_i\}$ (note that they all surround $0$ and are disjoint) that are contained in $V$ can be discovered in order from outside to inside. 
Hence, for any $k\ge1$, we can define $\eta_V^k$ as the $k$-th outermost loop in $V$\footnote{We mention that the proper definition of $\eta_V^k$ relies on the local finiteness property of the $\SLE_{8/3}$ loop soup clusters, which has been justified and exploited in~\cite[Section 2.2]{kemppainen2016nested}.}.
Note that conditioned on $\eta_V^k$, the loop soup configuration inside $\eta_V^k$ is an independent $\SLE_{8/3}$ loop soup on $\cD(\eta_V^k)$. This can be deduced from a standard approximation (see e.g.~\cite[Proof of Lemma 9.2]{sheffield2012conformal}). Namely, for any $n\ge1$, let $\mathcal{V}_n^k$ be the largest union of dyadic squares of side length $2^{-n}$ that are contained in $\cD(\eta_V^k)$. Then for each union $V_n$ of such dyadic squares, $\mathcal{L}_\C^\mathcal{W}\big|_{V_n}$ is independent of $\{\mathcal{V}_n^k=V_n\}$. Hence, conditionally on $\eta_V^k$, $\mathcal{L}_\C^\mathcal{W}\big|_{\mathcal{V}_n^k}$ is distributed as a $\SLE_{8/3}$ loop soup in $\mathcal{V}_n^k$. Since this holds for all $n$, the statement for $\eta_V^k$ follows.

Now, since the above statement holds for every $k$, it implies that if $\eta$ sampled from the counting measure on $\{\eta_V^k\}_{k\ge1}$, then the loop configuration inside $\cD(\eta)$ is an independent $\SLE_{8/3}$ loop soup in $\cD(\eta)$. Finally, the claim follows by letting $V\uparrow\C$ (note that the law of the loop sampled from the counting measure on $\{\eta_V^k\}_{k\ge1}$ then converges to the law of the loop sampled from the counting measure on $\{\eta_i\}$, which is exactly $\nu^i$).

\medskip
\textbf{Acknowledgement.} 
G.C. thanks Xin Sun and Baojun Wu for helpful comments and discussions. Y.G. thanks Wei Qian for bringing his attention to this problem and helpful discussions. We thank Wendelin Werner for pointing out the relevant work \cite{werner2003sles} and his remarks on the applicability of our results to the reversibility and duality properties of SLE.
We are grateful to an anonymous referee for 
numerous and valuable suggestions on previous versions of this manuscript.
Part of YG's work was done while working at City University of Hong Kong.
G.C. is supported by National Key R\&D Program of China (No.\ 2021YFA1002
700). Y.G. is supported by National Key R\&D Program of China (No.\ 2023YFA1010700).

\bibliographystyle{alpha}
\footnotesize{\bibliography{references}}

\end{document}